\newtheorem{Theorem}{Theorem}[section]
\newtheorem{Lemma}[Theorem]{Lemma}
\newtheorem{Corollary}[Theorem]{Corollary}
\newtheorem{Proposition}[Theorem]{Proposition}
\theoremstyle{definition}
\newtheorem{Definition}[Theorem]{Definition}
\newtheorem{Remark}[Theorem]{Remark}
\def\C{\mathbb C}
\def\E{\mathbb E}
\def\N{\mathbb N}
\def\R{\mathbb R}
\def\Z{\mathbb Z}
\newcommand{\CF}{\mathcal{F}}
\newcommand{\CH}{\mathcal{H}}
\newcommand{\CJ}{\mathcal{J}}
\newcommand{\CP}{\mathcal{P}}
\newcommand{\CS}{\mathcal{S}}
\newcommand{\CV}{\mathcal{V}}
\newcommand{\CW}{\mathcal{W}}
\newcommand{\CZ}{\mathcal{Z}}
\def\be{\begin{equation}}
\def\ee{\end{equation}}
\def\bt{\begin{Theorem}}
\def\et{\end{Theorem}}
\def\bi{\begin{itemize}}
\def\ei{\end{itemize}}
\def\bea{\begin{eqnarray}}
\def\eea{\end{eqnarray}}
\def\beast{\begin{eqnarray*}}
\def\eeast{\end{eqnarray*}}
\def\ben{\begin{enumerate}}
\def\een{\end{enumerate}}
\def\bi{\bibitem}
\newcommand{\norm}[1]{\left\Vert#1\right\Vert}
\def\lan{{\langle}}
\def\ran{{\rangle}}
\newcommand{\half}{{\frac{1}{2}}}
\renewcommand{\MR}[1]{} 
\DeclareMathOperator{\dom}{\mathfrak{D}}
\DeclareMathOperator{\fhalf}{\frac{1}{2} }
\newcommand{\abs}[1]{\left\vert#1\right\vert}
\begin{document}
\title[$q$-deformed  Araki-Woods von Neumann algebras]{Generator masas in $q$-deformed  Araki-Woods von Neumann algebras
and factoriality}

\author[Bikram]{Panchugopal Bikram}
\author[Mukherjee]{Kunal Mukherjee}

\address{Department of Statistics and Mathematics,
Indian Statistical Institute, No. 203, Barrackpore Trunk Road, Kolkata -  700108, India.}
\address{ Department of Mathematics, IIT Madras, Chennai - 600036 , India.}
\email{pg.math@gmail.com, kunal@iitm.ac.in}

\keywords{ $q$-commutation relations, von Neumann algebras, masa}
\subjclass[2010]{Primary  46L10; Secondary 46L65, 46L55.}

\begin{abstract} 
To any strongly continuous orthogonal representation of $\R$ on a real Hilbert space $\CH_\R$, Hiai
constructed $q$-deformed Araki-Woods von Neumann algebras for $-1< q< 1$, which are $W^{\ast}$-algebras  
arising from non tracial representations of the $q$-commutation relations, the latter yielding an interpolation
between the Bosonic and Fermionic statistics. We prove that if the orthogonal representation is not ergodic then
these von Neumann algebras are factors whenever $dim(\CH_\R)\geq 2$ and $q\in (-1,1)$. In such case, the centralizer of the $q$-quasi free state has trivial relative commutant. In the process, we study `generator masas' in these factors and establish that they are strongly mixing. 
\end{abstract} 
\maketitle

\section{Introduction}

In free probability, Voiculescu's $C^*$-free Gaussian functor associates a canonical $C^{*}$-algebra denoted by $\Gamma(\CH_\R)$ to a real Hilbert space $\CH_\R$, the former being generated by $s(\xi), \xi\in \CH_\R$, where each $s(\xi)$ is the sum of creation and annihilation operators on the full Fock space of the complexification of $\CH_\R$. The associated von Neumann algebra $\Gamma(\CH_\R)^{\prime\prime}$ is isomorphic to $L(\mathbb{F}_{dim( \CH_\R)})$ and is the central object in the study of free probability $($see \cite{DVN} for more on the subject$)$. In the literature, there are three interesting types of deformations of Voiculescu's free Gaussian functor each of which has a real Hilbert space $\CH_\R$ as the initial input data: $(i)$ the $q$-Gaussian functor due to Bo$\overset{.}{\text{z}}$ejko and Speicher for $-1<q<1$ (see \cite{BS}), $(ii)$ a functor due to Shlyakhtenko (see \cite{Shlyakhtenko}) which is a  free probability
analog of the construction of quasi free states on the CAR and CCR algebras and $(iii)$ the third one is a combination of the first two and is due to Hiai (see \cite{Hiai}); the associated von Neumann algebras are respectively called Bo$\overset{.}{\text{z}}$ejko-Speicher factors $($or $q$-Gaussian von Neumann algebras$)$, free Araki-Woods factors and $q$-deformed Araki-Woods von Neumann algebras.

Historically, for the first time, Frisch and Bourret in \cite{FB70} had considered operators satisfying the $q$-canonical 
commutation relations:
\begin{align*}
l(e)l(f)^* - q l(f)^*l(e) = \langle e, f\rangle I, \text{ }-1<q<1.
\end{align*}
The existence of such operators on an `appropriate Fock space' was proved by Bo$\overset{.}{\text{z}}$ejko and Speicher in \cite{BS}  and these operators have importance in particle statistics \cite{Gre90,GGG93}. Since then many experts have studied the $q$-Gaussian von Neumann algebras. Structural properties of the $q$-Gaussian algebras have been studied in \cite{avsec,BS,BKS,Dab14,ER,nou,sniady,Shlyakhtenko2,Shlyakhtenko3}. A short summary of the results obtained in these studies are as follows. For $dim(\CH_\R)\geq 2$, the $q$-Gaussian von Neumann algebras $\Gamma_q(\CH_\R)$ are non-injective, solid, strongly solid, non $\Gamma$ factors with $w^*$-completely contractive approximation property. Further, $\Gamma_{q}(\CH_\R)\cong L(\mathbb{F}_{dim(\CH_\R)})$ for values of $q$ sufficiently close to zero \cite{GS14}.

The Shlyakhtenko functor in \cite{Shlyakhtenko} associates a $C^*$-algebra $\Gamma(\CH_\R, U_t)$ to a pair $( \CH_\R, U_t)$, where $\CH_\R$ is a real Hilbert space and $(U_t)$ is a strongly continuous real orthogonal representation of $\R$ on $\CH_\R$. The von Neumann algebras $\Gamma(\CH_\R, U_t)^{\prime\prime}$ obtained this way i.e., the free Araki-Woods von Neumann algebras are full factors of type $\rm{III}_{\lambda}$, $0<\lambda\leq 1$, when $(U_t)$ is non trivial and $dim(\CH_\R)\geq 2$ \cite{Shlyakhtenko}. These von Neumann algebras are type $\rm{III}$ counterparts of the free group factors. In short, they satisfy complete metric approximation property, lack Cartan subalgebras, are strongly solid, and, they satisfy Connes' bicentralizer problem when they are type $\rm{III}_1$ $($see \cite{Ho09, HR11,BHV15}$)$. They have many more interesting properties.

The third functor mentioned above is the $q$-deformed functor due to Hiai for $-1<q<1$ (see \cite{Hiai}). Hiai's functor is the main topic of this paper. It is a combination of Bo$\overset{.}{\text{z}}$ejko-Speicher's functor and Shlyakhtenko's functor. This functor, like the Shlyakhtenko's functor, associates a $C^*$-algebra $\Gamma_q(\CH_\R, U_t)$ to a pair $( \CH_\R, U_t)$, where $\CH_\R$ is a real Hilbert space and $(U_t)$ is a strongly continuous orthogonal representation of $\R$ on $\CH_\R$ as before. The associated von Neumann algebras in this construction depend on $q\in (-1,1)$ and are represented in standard form on `twisted full Fock spaces' that carry the spectral data of $(U_t)$ and connects it to the modular theory of this particular standard representation in a manner such that the canonical creation and annihilation operators satisfy the $q$-canonical commutation relations of Frisch and Bourret. Hiai's functor coincides with Bo$\overset{.}{\text{z}}$ejko-Speicher's functor when $(U_t)$ is trivial and also coincides with Shlyakhtenko's functor when $q = 0$. Note that $\Gamma_q(\CH_\R, U_t)^{\prime\prime}$ is abelian when $dim(\CH_\R)=1$, so the 
situation becomes interesting when $dim(\CH_\R)\geq 2$.

Assume $dim(\CH_\R)\geq 2$. Unlike the free Araki-Woods factors, not much is known about the $q$-deformed Araki-Woods von Neumann algebras. Hiai proved amongst other things that when the almost periodic part of $(U_t)$ is infinite dimensional, the centralizer of the $q$-quasi free state $($vacuum state$)$ has trivial relative commutant and thus decided factoriality of the ambient von Neumann algebra $\Gamma_q(\CH_\R, U_t)^{\prime\prime}$ $($Thm. 3.2, \cite{Hiai}$)$. Thus, he was also able to decide the type of these factors under the same hypothesis imposed $($Thm. 3.3, \cite{Hiai}$)$. He also exhibited non injectivity of $\Gamma_q(\CH_\R, U_t)^{\prime\prime}$ depending on the `thickness of the spectrum of the analytic generator of $(U_t)$' $($Thm. 2.3, \cite{Hiai}$)$. Recently, Nelson generalized the techniques of free monotone transport originally developed in \cite{GS14} beyond the tracial case. Using this powerful tool he proved that $\Gamma_q(\CH_\R, U_t)^{\prime\prime}\cong \Gamma_0(\CH_\R, U_t)^{\prime\prime}$ $($the latter being the free Araki-Woods factors$)$ around a small interval centred at $0$, and hence decide factoriality $($Thm. 4.5, 4.6, \cite{Nel15}$)$. Thus, even factoriality of $\Gamma_q(\CH_\R, U_t)^{\prime\prime}$ is not known to hold in general. The main purpose of this paper is to investigate the factoriality of $\Gamma_q(\CH_\R, U_t)^{\prime\prime}$.
The main result in this paper is the following:

\begin{Theorem}\label{MainFactorialityTheorem}
For any strongly continuous orthogonal representation $t\mapsto U_t$, of $\R$ on a separable real Hilbert space $\CH_\R$ with $dim(\CH_\R)\geq 2$ and for all $q\in (-1,1)$, the $q$-deformed Araki-Woods von Neumann algebras $\Gamma_q(\CH_\R,U_t)^{\prime\prime}$ are factors, if there exists a unit vector $\xi_0\in \CH_\R$ such that $U_t\xi_0=\xi_0$ for all $t\in\R$.
\end{Theorem}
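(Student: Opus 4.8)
The plan is to show that the centre of $M:=\Gamma_q(\CH_\R,U_t)''$ is trivial by locating a large diffuse abelian subalgebra inside the centralizer of the vacuum state $\varphi$ and exploiting its rigidity on the twisted $q$-Fock space. Since $U_t\xi_0=\xi_0$ for all $t$, the analytic generator $A$ of $(U_t)$ satisfies $A\xi_0=\xi_0$, so the deformed inner product agrees with the original one on $\R\xi_0$, the splitting $\CH_\R=\R\xi_0\oplus\CK_\R$ (with $\CK_\R:=(\R\xi_0)^\perp$) is $U$-invariant, and $\xi_0$ is $\langle\cdot,\cdot\rangle_U$-orthogonal to $\CK_\R$. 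Hence $\sigma^\varphi_t(W(\xi_0))=W(\xi_0)$, so the ``generator masa'' $A_0:=\{W(\xi_0)\}''$ is contained in the centralizer $M_\varphi$; it is diffuse abelian (isomorphic to $L^\infty$ of the $q$-Gaussian measure), $\Omega$ is separating for it, and by Takesaki's theorem there is a $\varphi$-preserving normal conditional expectation $E_{A_0}\colon M\to A_0$, which on $L^2(M)$ is the orthogonal projection onto $\overline{A_0\Omega}=\bigoplus_{j\ge 0}\C\,\xi_0^{\otimes j}$.

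First I would prove that $A_0$ is maximal abelian in $M$ — this is the ``generator masa'' statement (and I would expect to sharpen it to strong mixing along the way). Given $x\in A_0'\cap M$, one expands $x\Omega$ on the twisted $q$-Fock space and uses $[x,W(\xi_0)]=0$ together with the creation/annihilation (Wick) calculus to peel off $\xi_0$-particles; because $\xi_0$ is a modular-fixed unit vector orthogonal to $\CK_\R$, the $q$-combinatorics is as rigid as in the tracial $q$-Gaussian case and forces $x\Omega\in\overline{A_0\Omega}$, i.e.\ $x\in A_0$. In particular $\CZ(M)=M'\cap M\subseteq A_0'\cap M=A_0$.

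The decisive ingredient is an explicit computation of $E_{A_0}$ on sandwiches by a transverse generator. Since $\dim\CH_\R\ge 2$, fix a unit vector $\eta\in\CK_\R$ and put $c_\eta:=\varphi(W(\eta)^2)=\langle\eta,\eta\rangle_U>0$. Using $\langle\xi_0,\eta\rangle_U=0$ and the $q$-deformed creation/annihilation formulas one checks that $W(\xi_0)$ acts identically on $\{\xi_0^{\otimes j}\}_{j\ge0}$ and on $\{\xi_0^{\otimes j}\otimes\eta\}_{j\ge0}$ (the $q$-factor arising when an annihilator passes the trailing $\eta$ vanishes); hence $V\colon\xi_0^{\otimes j}\mapsto\xi_0^{\otimes j}\otimes\eta$ intertwines the $W(\xi_0)$-action, so $w\eta=\sum_j a_j\,\xi_0^{\otimes j}\otimes\eta$ whenever $w\Omega=\sum_j a_j\,\xi_0^{\otimes j}$ for $w\in A_0$. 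Applying $W(\eta)$ once more and projecting, the creation part leaves $\overline{A_0\Omega}$ and the annihilation part contributes $\sum_j a_j q^{j}c_\eta\,\xi_0^{\otimes j}$, giving $E_{A_0}(W(\eta)\,w\,W(\eta))=c_\eta\,\Phi_q(w)$ for all $w\in A_0$, and in particular $E_{A_0}(W(\eta)^2)=c_\eta\,1$, where $\Phi_q\colon\xi_0^{\otimes j}\mapsto q^{j}\xi_0^{\otimes j}$ is the second-quantised ``$q$ to the number operator'': a $\varphi$-preserving unital completely positive map on $A_0$ whose fixed-point set is $\C 1$ since $|q|<1$.

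Finally I would conclude: let $z\in\CZ(M)$; by the second step $z\in A_0$. Centrality gives $zW(\eta)=W(\eta)z$, hence $W(\eta)\,z\,W(\eta)=W(\eta)^2 z$; applying $E_{A_0}$ and the $A_0$-bimodule property of $E_{A_0}$ yields $c_\eta\,\Phi_q(z)=c_\eta\,z$, so $\Phi_q(z)=z$ and therefore $z=\varphi(z)1$. Thus $\CZ(M)=\C1$ and $M$ is a factor (the same scheme, run with a transverse \emph{fixed} unit vector when one exists, also gives $M_\varphi'\cap M=\C$). The main obstacle is the second step: showing that the generator masa $A_0$ is maximal abelian (and mixing) inside the genuinely non-tracial algebra $M$ requires controlling the $q$-Fock combinatorics in the presence of the modular twist that $U_t$ carries on $\CK_\R$; the transverse-sandwich identity and the final algebraic manipulation are comparatively soft, using only that the $\xi_0$-direction is modular-trivial and orthogonal to everything else.
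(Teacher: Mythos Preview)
Your proposal is correct and follows the same overall architecture as the paper: establish that $A_0=M_{\xi_0}$ is a masa in $M_q$ (the paper does this via a Hilbert--Schmidt estimate on the operators $T_{x,y}$ to show the left--right measure is Lebesgue absolutely continuous, hence $A_0$ is strongly mixing and in particular maximal abelian), conclude $\CZ(M_q)\subseteq A_0$, and then use a transverse vector $\eta\perp\xi_0$ to force any central $z$ to be scalar. You correctly flag the masa step as the hard one.

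The only genuine difference is the finishing move. The paper (Thm.~6.2) computes $s_q(\eta)z\Omega=\sum_n a_n\,\eta\otimes\xi_0^{\otimes n}$ and $zs_q(\eta)\Omega=\sum_n a_n\,\xi_0^{\otimes n}\otimes\eta$ and equates them directly, using that $\eta\otimes\xi_0^{\otimes n}\neq\xi_0^{\otimes n}\otimes\eta$ for $n\geq 1$ to kill the coefficients. Your route instead packages the same orthogonality into the identity $E_{A_0}(W(\eta)\,w\,W(\eta))=c_\eta\Phi_q(w)$ with $\Phi_q$ the ``$q$-number operator'' semigroup, and then reads off $z\in\C$ from the fixed-point set of $\Phi_q$. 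Both arguments rest on exactly the same Fock-space computation (the annihilator $c_q(\eta)^*$ passing through $\xi_0^{\otimes j}$ picks up the factor $q^j$), so neither is more general; yours is perhaps slightly more conceptual in that it isolates a completely positive map with trivial fixed points, while the paper's is a line shorter since it avoids introducing $E_{A_0}$ and $\Phi_q$ at that stage.
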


Our proof is primarily motivated by the main result in \cite{ER} on masas. The second observation is, if a finite von Neumann algebra contains a diffuse masa so that the orthocomplement of the associated Jones' projection $($with respect to a faithul normal tracial state$)$ as a bimodule over the masa is a direct sum of coarse bimodules, then  the ambient von Neumann algebra must be a factor. Thus, our proof depends on singular masas $($and this is natural as we are dealing with algebras which are close to free group factors$)$. So, our techniques are more close to understanding the measure-multiplicity invariant of a masa that was introduced in \cite{DSS06}. The masas that we work with lie in the centralizer of the $q$-quasi free state. We call these as \emph{generator masas}, as these masas are indeed the analogue of generator masas in the free group factors. The generator masas in the free group factors have vigorous mixing properties. So, to compare, we investigate mixing properties of generator masas in $\Gamma_q(\CH_\R,U_t)^{\prime\prime}$ and show that the left-right measure of these masas $($see \cite{Muk09} for Defn.$)$ are Lebesgue absolutely continuous. 

Now we describe the layout of the paper. In \S2, we collect all the necessary material that is needed to address the problem. This section contains an account of Hiai's construction, associated modular theory, description of the commutant and other technical details. A convenient description of the centralizer of the $q$-quasi free state is required. The centralizer depends entirely on the almost periodic component of $(U_t)$ and its GNS space is described in Thm. \ref{CentraliserDescribe}. In \S4, we investigate the properties of the generator abelian algebras which are indispensable ingredients in our arguments. In Thm. \ref{eigen-vector}, we establish that a canonical self-adjoint generator of $\Gamma_q(\CH_\R,U_t)^{\prime\prime}$ generates a diffuse abelian algebra $($generator masa$)$ having conditional expectation that preserves the vacuum state if and only if the generator lies in the centralizer of $\Gamma_q(\CH_\R,U_t)^{\prime\prime}$ with respect to the same state.

We begin \S5 by making a short account on how to regard a GNS space of an arbitrary von Neumann algebra equipped with a faithful normal state as a standard bimodule over a masa, when the masa comes from the centralizer of the associated state. 
We also discuss strong mixing of masas $($lying inside the centralizer$)$ with respect to a particular faithful normal state and also highlight on calculating left-right measures of masas. In Thm. \ref{stronglymixingmasa} and Thm. \ref{masa}, we show that for a generator algebra $($masa$)$ in $\Gamma_q(\CH_\R,U_t)^{\prime\prime}$ that possess conditional expectation preserving the vacuum state, the left-right measure is indeed \emph{Lebesgue absolutely continuous for all} $q\in (-1,1)$. This justifies the term `generator masa'. This statement is an indication that $\Gamma_q(\CH_\R,U_t)^{\prime\prime}$ will share many properties of the free group factors even when $q$ is away from $0$ $($the case when $q$ is close to $\pm 1$ is probably more interesting from the point of view of physics$)$ and is a reflection of a deep theorem of Voiculescu on the subject \cite{Voi96}. It readily follows that if the fixed point subspace of $(U_t)$ is at least two dimensional, then the centralizer of the vacuum state has trivial relative commutant and hence $\Gamma_q(\CH_\R,U_t)^{\prime\prime}$ is a factor $($Cor. \ref{Trivialrelcommutant}$)$.

In \S6, we establish factoriality of $\Gamma_q(\CH_\R,U_t)^{\prime\prime}$ in Thm. \ref{wmixingfactor} and Thm. \ref{Factor1eigenvalue}, when $dim(\CH_\R)\geq 2$ and $q\in (-1,1)$, in the case when $(U_t)$ is not ergodic or has a non trivial weakly mixing component. In \S\ref{StructureofCentralizer}, we extend the statement of Cor. \ref{Trivialrelcommutant} in Thm. \ref{TrivialRelativeCommutant} to show that the centralizer of the vacuum state has trivial relative commutant when $dim(\CH_\R)\geq 2$, the fixed point subspace of $(U_t)$ is at least one dimensional  and the dimension of the almost periodic part of $(U_t)$ is at least two dimensional. Finally, we characterize the type of the factors obtained via Hiai's construction in Thm. \ref{WeakmixingtoIII_1} and Thm. \ref{AlmostPeriodicCase} under the assumption that $(U_t)$ is almost periodic with a non trivial fixed point or has a weakly mixing component. The results are analogous to the ones found in Thm. $3.3$ \cite{Hiai}.

$ $\\
\noindent\textbf{Acknowledgements}: The authors thank Roland Speicher, Pitor {\'S}niady and Todd Kemp for their help with references. Special thanks to Fumio Hiai for sending his papers promptly on request which were not available to us. The first author is grateful to IIT Madras and Hausdorff Research Institute for their warm hospitality where much of this work was completed. The second author thanks Arindama Singh, Jon Bannon and Jan Cameron for helpful discussions.


\section{Preliminaries}\label{Araki-Woods}

In this section, we collect some well known facts about the $q$-deformed Araki-Woods von Neumann algebras constructed by Hiai in \cite{Hiai} that will be indispensable for our purpose. For detailed exposition, we refer the interested readers to \cite{Hiai}. As a convention, all Hilbert spaces in this paper are separable, all von Neumann algebras have separable preduals and inner products are linear in the \emph{second variable}.

\subsection{Hiai's Construction}\label{ConsHiai} Let $\CH_{\R}$ be a real Hilbert space and let $t\mapsto U_t$, $t\in \mathbb{R}$,
be a strongly continuous orthogonal representation of $\R$ on $\CH_{\R}$. 
Let  $\CH_\C=\CH_{\R}\otimes_\R \C$ denote the complexification of $\CH_{\R}$.
Denote the inner product and norm on $\CH_\C $ by $\langle \cdot , \cdot \rangle_{\CH_\C} $ 
and $\norm{\cdot}_{\CH_\C}$ respectively. Identify $\CH_{\R}$ in $\CH_{\C}$ by $\CH_{\R} \otimes 1 $. 
Thus, $\CH_\C =  \CH_{\R} + i \CH_{\R}$, and as  a real Hilbert space the inner product of 
$\CH_{\R} $ in $\CH_\C $ is given by $\Re \langle \cdot, \cdot \rangle_{\CH_\C} $.
Consider the bounded anti-linear operator $\mathcal{J}: \CH_\C \rightarrow \CH_\C  $ given  by 
$\mathcal{J}(\xi + i \eta )= \xi - i \eta$, $\xi, \eta \in \CH_{\R} $,
and note that $\mathcal{J}\xi = \xi$ for $\xi\in \CH_{\R}$. Moreover,
\begin{align*}
\langle \xi, \eta \rangle_{\CH_\C} = \overline {\langle \eta , \xi \rangle}_{\CH_\C}  = \langle \eta , \mathcal{J} \xi \rangle_{\CH_\C}, \text{ for all } \xi \in \CH_{\C} , \eta \in \CH_{\R}.
\end{align*}
Linearly extend the flow $t\mapsto U_t$ from $\CH_{\R} $ to a strongly continuous one parameter group of unitaries 
in $\CH_{\C}$  and denote the extensions by $U_t$ for each $t$ with abuse of notation.
Let $A$ denote the analytic generator and $H$ the associated Hamiltonian of the extension. Then $A$ is positive,  nonsingular and self-adjoint, while $H$ is self-adjoint. Since $\CH_{\R}$ reduces $U_{t}$ for all $t\in \mathbb{R}$, so $\CH_{\R}$ reduces $iH$ as well. Denoting $\mathfrak{D}(\cdot)$ to be the domain of an $($unbounded$)$ operator, one notes that $\mathfrak{D}(H) = \mathfrak{D}(iH)$ and $H$ maps $\mathfrak{D}(H)\cap \CH_{\R}$ into $i\CH_{\R}$.  
It follows that $\CJ H = -H\CJ$ and $\CJ A = A^{-1}\CJ$.

Introduce a new inner product on $\CH_{\C}$ by $\langle \xi, \eta \rangle_U = \langle \frac{2}{1+ A^{-1}} \xi, \eta \rangle_{\CH_\C}$, $\xi, \eta \in \CH_{\C}$, and let $\norm{\cdot}_{U}$ denote the associated norm on $\CH_{\C}$. Let $\CH$ denote the complex Hilbert space obtained by completing $(\CH_{\C}, \norm{\cdot}_{U})$.  The inner product and norm of $\CH$ will respectively be denoted by $\langle\cdot,\cdot\rangle_U$ and $\norm{\cdot}_{U}$ as well. Then, $(\CH_{\R}, \norm{\cdot}_{\CH_\C})\ni \xi \overset{\imath}\mapsto\xi \in (\CH_{\C}, \norm{\cdot}_{U})\subseteq (\CH,\norm{\cdot}_{U})$, is an isometric embedding of the real Hilbert space $\CH_\R$ in $\CH$ $($in the sense of \cite{Shlyakhtenko}$)$. With abuse of 
notation, we will identify $\CH_\R$ with its image $i(\CH_\R)$. Then, $\CH_{\R}\cap i\CH_{\R}=\{0\}$ and $\CH_{\R}+ i\CH_{\R}$ is dense in $\CH$ $($see pp. 332 \cite{Shlyakhtenko}$)$.

It is now appropriate to record a subtle point which will be crucial in our attempt to describe the centralizers of the 
$q$-deformed Araki-Woods von Neumann algebras. As $A$ is affiliated to $vN(U_t:t\in \R)$, so note that 
\begin{align}\label{Liftisunitary}
\langle U_t\xi, U_t\eta\rangle_U=\langle \xi,\eta\rangle_U, \text{ for }\xi,\eta\in \CH_\C.
\end{align}
Consequently, $(U_t)$ extends to a strongly continuous unitary representation $(\widetilde{U}_t)$ of $\R$ on $\CH$. 
Let $\widetilde{A}$ be the analytic generator associated to $(\widetilde{U}_t)$, which is obviously an extension of $A$. From the definition of $\langle\cdot,\cdot\rangle_U$ on $\CH_\C$, it follows that if $\mu$ is the spectral measure of $A$, then $\nu=f\mu$ is the spectral measure of $\widetilde{A}$, where $f(x)=\frac{2x}{1+x}$ for $x\in\R_{\geq 0}$, and by the spectral theorem $($direct integral form$)$, the multiplicity functions in the associated direct integrals remain the same. Note that $L^{2}(F, \mu_{|F})\subseteq L^{2}(F,\nu_{|F})$ for all Borel subsets $F$ of $(0,\infty)$. But, as $f$ is increasing, it follows that $L^{2}(F, \mu_{|F})=L^{2}(F,\nu_{|F})$ $($as a vector space$)$ when $F\subseteq[\lambda,\infty)$ is measurable for all $\lambda>0$. Moreover, $0<\lambda$ is an atom of $\mu$ if and only if it is an atom of $\nu$.  Thus, if $E_A$ and $E_{\widetilde{A}}$ denote the associated projection-valued spectral measures, then $E_{A}([\lambda,\infty))(\CH_\C)=E_{\widetilde{A}}([\lambda,\infty))(\CH)$ and $E_{A}(\lambda)(\CH_\C)=E_{\widetilde{A}}(\lambda)(\CH)$ for all $\lambda>0$. We record the following in the form of a proposition. 

\begin{Proposition}\label{Eigenvector}
Any eigenvector of $\widetilde{A}$ is an eigenvector of $A$ corresponding to the same eigenvalue. 
\end{Proposition}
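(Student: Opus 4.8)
The statement is a direct consequence of the spectral comparison between $A$ and $\widetilde{A}$ recorded in the paragraph preceding it, so the argument will be short. First I would recall the two facts established there: since the scalar spectral measure of $\widetilde{A}$ is $\nu=f\mu$ with $f(x)=\frac{2x}{1+x}$ strictly positive and increasing on $(0,\infty)$, the measures $\mu$ and $\nu$ are mutually absolutely continuous and share the same atoms inside $(0,\infty)$; consequently, for every $\lambda>0$ one has $E_{A}([\lambda,\infty))(\CH_\C)=E_{\widetilde{A}}([\lambda,\infty))(\CH)$ and, more to the point here, $E_{A}(\{\lambda\})(\CH_\C)=E_{\widetilde{A}}(\{\lambda\})(\CH)$.

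Now let $\xi\in\CH$ be an eigenvector of $\widetilde{A}$, say $\widetilde{A}\xi=\lambda\xi$ with $\xi\neq 0$. Since $\widetilde{A}$, being the analytic generator of the strongly continuous one-parameter unitary group $(\widetilde{U}_t)$ on $\CH$, is positive and nonsingular (in particular injective, and it extends the positive nonsingular operator $A$), the eigenvalue $\lambda$ must satisfy $\lambda>0$. The eigenspace of $\widetilde{A}$ at $\lambda$ is exactly $E_{\widetilde{A}}(\{\lambda\})(\CH)$, so by the identity recalled above, $\xi\in E_{\widetilde{A}}(\{\lambda\})(\CH)=E_{A}(\{\lambda\})(\CH_\C)$. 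Hence $\xi$ already lies in $\CH_\C$, it belongs to $\mathfrak{D}(A)$ (it lies in a spectral subspace associated with a bounded Borel set), and $E_{A}(\{\lambda\})\xi=\xi$; the last equality forces $A\xi=\int_{(0,\infty)}t\,dE_{A}(t)\,\xi=\lambda\xi$. Thus $\xi$ is an eigenvector of $A$ with the same eigenvalue $\lambda$, as claimed.

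As for the difficulty: there is essentially none beyond bookkeeping, since all the analytic content — identifying the scalar spectral measures of $A$ and $\widetilde{A}$ and the associated spectral subspaces above each level $\lambda>0$ — was already carried out before the statement. The only point that deserves a moment's attention is ruling out $\lambda=0$, because the identification of spectral subspaces is asserted only for $\lambda>0$; this is handled by the injectivity (nonsingularity) of the analytic generator of a unitary $C_0$-group.
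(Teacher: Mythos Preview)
Your proposal is correct and follows exactly the approach the paper intends: the proposition is stated immediately after the paragraph establishing that $E_{A}(\{\lambda\})(\CH_\C)=E_{\widetilde{A}}(\{\lambda\})(\CH)$ for all $\lambda>0$, and the paper gives no separate proof, treating it as an immediate consequence of that identity. Your write-up simply makes explicit the one-line deduction (together with the observation that $\lambda>0$ by nonsingularity of the analytic generator), which is precisely what the paper leaves to the reader.
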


Since the spectral data of $A$ and $\widetilde{A}$ $($and hence of $(U_t)$ and $(\widetilde{U}_t))$ are essentially the same, and $\widetilde{U}_t,\widetilde{A}$ are respectively extensions of $U_t,A$ for all $t\in \R$, so we would now write 
$\widetilde{A}=A$ and $\widetilde{U}_t=U_t$ for all $t\in\R$. This abuse of notation will cause no confusion; on occasions 
where we need to differentiate the two, we will notify.

Given a complex Hilbert space and $ -1 < q < 1 $, the notion of $q$-Fock space $\CF_q(\cdot)$ was introduced in \cite{BS}. The $q$-Fock space $\CF_q(\CH)$ of $\CH$ is constructed as follows. Let $\Omega$ be a distinguished unit vector in $\C$ usually referred to as the vacuum vector. Denote $\CH^{\otimes 0}= \C\Omega$, and, for $n\geq 1$, let $\CH^{\otimes n} = \text{ span}_{\C}\{\xi_1 \otimes \cdots \otimes \xi_n : \xi_i \in \CH \text{ for }1\leq i\leq n\}$ denote the algebraic tensor products. Let $\CF_{fin}(\CH)=\text{ span}_{\C}\{\CH^{\otimes n}: n\geq 0 \}$. For $n,m\geq 0$ and $f=\xi_1 \otimes \cdots \otimes \xi_n \in \CH^{\otimes n}$, $g= \zeta_1\otimes \cdots \otimes \zeta_m\in \CH^{\otimes m}$, the association 
\begin{align}\label{qFock}
\langle f, g \rangle_q = \delta_{m, n} 
\sum_{ \pi \in S_n } q^{ i(\pi) } \langle \xi_1, \zeta_{\pi(1) }\rangle_U \cdots\langle \xi_n, \zeta_{\pi(n) } \rangle_U, 
\end{align} 
where $i(\pi)$ denotes the number of  inversions of the permutation $\pi \in S_n $, defines a positive definite sesquilinear form on  $ \CF_{fin}(\CH)$ and the $q$-Fock space $\CF_q(\CH) $ is the completion of $ \CF_{fin}(\CH)$ with respect to the norm $\norm{\cdot}_{q}$ induced by $ \langle \cdot, \cdot \rangle_q $.

For $n\in\N$, let $\CH^{\otimes_q n}=\overline{\CH^{\otimes n}}^{\norm{\cdot}_q}$. For our purposes, it is important to note that $\langle\cdot,\cdot\rangle_q$ and $\langle\cdot,\cdot\rangle_0$ are equivalent on $\CH^{\otimes n}$ and $\langle\cdot,\cdot\rangle_0$ is the inner product of the standard tensor product. Thus,  rephrasing and combining two lemmas of \cite{BS} one has the following.

\begin{Lemma}\label{Equivalent}
The map $id: (\CH^{\otimes n},\norm{\cdot}_q)\rightarrow (\CH^{\otimes n},\norm{\cdot}_0)$, given by $id(\xi_1\otimes\cdots\otimes\xi_n)=(\xi_1\otimes\cdots\otimes\xi_n)$, where $\xi_i\in\CH$, $1\leq i\leq n$, extends uniquely to a bounded and invertible linear map $S:(\CH^{\otimes_q n},\norm{\cdot}_q)\rightarrow (\CH^{\otimes_0 n},\norm{\cdot}_0)$ for $-1<q<1$. 
\end{Lemma}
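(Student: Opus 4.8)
The statement to prove is Lemma \ref{Equivalent}: the identity map on simple tensors extends to a bounded, invertible linear map $S:(\CH^{\otimes_q n},\norm{\cdot}_q)\to(\CH^{\otimes_0 n},\norm{\cdot}_0)$. The plan is to reduce everything to a single operator inequality on the $n$-fold tensor power that compares the $q$-inner product with the standard one. Concretely, I would introduce the operator $P^{(n)}_q$ on the algebraic tensor product $\CH^{\otimes n}$ defined by $\langle f, P^{(n)}_q g\rangle_0 = \langle f, g\rangle_q$; unravelling \eqref{qFock}, this is the symmetrization-type operator $P^{(n)}_q = \sum_{\pi\in S_n} q^{i(\pi)}\, \pi$, where $\pi$ acts on $\CH^{\otimes n}$ by permuting tensor legs. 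Since $\langle\cdot,\cdot\rangle_0$ is the genuine Hilbert tensor inner product and each permutation operator $\pi$ is unitary for $\norm{\cdot}_0$, the operator $P^{(n)}_q$ is bounded and self-adjoint on $(\CH^{\otimes n},\norm{\cdot}_0)$ with $\norm{P^{(n)}_q}\le \sum_{\pi\in S_n}|q|^{i(\pi)} = \prod_{k=1}^{n}\frac{1-|q|^k}{1-|q|}<\infty$.

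Next I would invoke the key positivity/strict-positivity fact due to Bo\.{z}ejko and Speicher: $P^{(n)}_q$ is \emph{strictly positive}, i.e.\ there is a constant $c_n(q)>0$ with $P^{(n)}_q \ge c_n(q) I$ on $(\CH^{\otimes n},\norm{\cdot}_0)$. (This is exactly the content of the lemmas of \cite{BS} the excerpt refers to; one standard route is the Bo\.{z}ejko--Speicher recursion $P^{(n)}_q = (I\otimes P^{(n-1)}_q)\,R_n$ with $R_n = \sum_{k=0}^{n-1} q^k\, (\text{cyclic shift})^k$, together with the observation that $\norm{R_n - I}$ can be controlled and, more to the point, that $R_n$ is invertible with a lower bound $\prod_{k\ge 1}(1-|q|^k)>0$; iterating gives $c_n(q)\ge \prod_{k=1}^{\infty}(1-|q|^k)>0$ uniformly in $n$.) Given this, for $f\in\CH^{\otimes n}$ we get the two-sided estimate $c_n(q)\,\norm{f}_0^2 \le \norm{f}_q^2 = \langle f, P^{(n)}_q f\rangle_0 \le \norm{P^{(n)}_q}\,\norm{f}_0^2$, so the two norms are equivalent on the algebraic tensor product.

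From the equivalence of norms the conclusion is now immediate: the identity map on $\CH^{\otimes n}$ is Lipschitz for both norms in both directions, hence it extends uniquely by continuity to a bounded linear bijection $S$ between the completions $\CH^{\otimes_q n}$ and $\CH^{\otimes_0 n}$, with $\norm{S}\le \norm{P^{(n)}_q}^{1/2}$ and $\norm{S^{-1}}\le c_n(q)^{-1/2}$. I would remark that the inverse of $S$ is the continuous extension of the same identity map, and that $S$ intertwines the obvious actions of $\CH$ by creation operators up to the deformation, though that is not needed for the bare statement.

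\textbf{Main obstacle.} The only non-formal ingredient is the strict positivity $P^{(n)}_q \ge c_n(q) I$ with a positive lower bound; everything else is bookkeeping with permutation operators and the universal property of completions. Since the excerpt explicitly says the lemma is obtained by "rephrasing and combining two lemmas of \cite{BS}", I would simply cite those two lemmas for (a) positive-definiteness of the form \eqref{qFock} and (b) the strict lower bound on $P^{(n)}_q$, rather than reproving the Bo\.{z}ejko--Speicher recursion in detail. If a self-contained argument were wanted, the delicate point is showing the lower bound does not degenerate as $n\to\infty$, which is where the convergence of the infinite product $\prod_{k\ge1}(1-|q|^k)$ (valid precisely because $|q|<1$) enters.
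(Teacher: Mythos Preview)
Your proposal is correct and follows essentially the same route as the paper: introduce the operator $P^{(n)}_q=\sum_{\pi\in S_n}q^{i(\pi)}U_\pi$, cite \cite{BS} for its strict positivity, deduce the two-sided norm equivalence, and extend by completion. One minor slip: your norm bounds for $S$ and $S^{-1}$ are interchanged (since $S$ goes from $\norm{\cdot}_q$ to $\norm{\cdot}_0$, the inequality $c_n(q)\norm{f}_0^2\le\norm{f}_q^2$ gives $\norm{S}\le c_n(q)^{-1/2}$, and similarly $\norm{S^{-1}}\le\norm{P^{(n)}_q}^{1/2}$), but this does not affect the argument.
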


\begin{proof}
Following \cite{BS}, every $\pi\in S_n$ induces an unitary operator on $\CH^{\otimes_0 n}$ given by $U_\pi(\xi_1\otimes\cdots\otimes\xi_n)=\xi_{\pi(1)}\otimes\cdots\otimes\xi_{\pi(n)}$, $\xi_i\in\CH$, $1\leq i\leq n$. 
Let $P_q=\sum_{\pi\in S_n} q^{i(\pi)}U_{\pi}$. Then, $P_q\in \textbf{B}(\CH^{\otimes_0 n})$ and by Lemma 3 and Lemma 4 of \cite{BS}, $P_q$ is strictly positive for $-1<q<1$ and $\langle f,g\rangle_q =\langle f,P_q g\rangle_0$ for all $f,g\in 
\CH^{\otimes n}$. Consequently, $P_q$ is injective and hence invertible. It follows that 
\begin{align*}
\frac{1}{\norm{P_q^{-\half}}}\norm{f}_0\leq \norm{f}_q\leq \norm{P_q}^{\half}\norm{f}_0,\text{ for }f\in \CH^{\otimes n}.
\end{align*}
The rest is obvious.
\end{proof}

The following norm inequalities will be crucial (c.f.  \cite{BKS}, \cite{BS},  and \cite{ER}):

\noindent$\bullet$ If $\xi\in \CH$ and ${\norm{\xi}}_U = 1$, then 
\begin{align}\label{Normelt}
{\norm{\xi^{\otimes n}}}_q^2 = [n]_q!, 
\end{align}
where $[n]_q := 1+ q+ \cdots +  q^{(n-1)}$, $[n]_q! :=\prod_{j=1}^{n} [j]_q, \text{ for }n\geq 1$,    
and $[0]_q := 0$, $[0]_q! := 1$ by convention.

\noindent$\bullet$ If $\xi_1,\cdots, \xi_n , \xi \in \CH$ with $ {\norm{\xi_j}}_U={\norm{\xi}}_U = 1$ for all $1\leq j\leq n$, then the following estimate holds:
\begin{align}\label{normestimate}
{\norm{\xi_1 \otimes \cdots \otimes \xi_n \otimes \xi^{\otimes m}}}_q \leq C_q^{\frac{n}{2}} \sqrt{ [m]_q!}, \text{ }m\geq 0,
\end{align}
where $C_q = \prod_{ i= 1 }^\infty \frac{1}{(1-|q|^i)} $.

For $\xi\in \CH $, the left $q$-creation and $q$-annihilation operators on $\CF_q(\CH) $ are respectively defined by:
\begin{align}\label{Leftmult}
&c_q(\xi)\Omega  = \xi, \\ 
\nonumber&c_q(\xi) (\xi_1 \otimes \cdots\otimes \xi_n) =\xi \otimes  \xi_1 \otimes \cdots \otimes \xi_n,\\
\nonumber&\text{and},\\
\nonumber&c_q(\xi)^*\Omega  = 0,\\ 
\nonumber&c_q(\xi)^* (\xi_1 \otimes \cdots\otimes \xi_n) = \sum_{i = 1}^n{q^{i-1}}
\langle   \xi , \xi_i\rangle_U \xi_1 \otimes \cdots \otimes \xi_{i-1} \otimes \xi_{i+1} \otimes \cdots \otimes \xi_n, 
\end{align}
where $\xi_1 \otimes \cdots\otimes \xi_n\in \CH^{\otimes_q n}$ for $n\geq 1$.
The operators $c_q(\xi)$ and $c_q(\xi)^* $ are bounded on $ \CF_q(\CH) $ and they are 
adjoints of each other. Moreover, they satisfy the following $q$-commutation relations:
\begin{align*}
c_q(\xi)^* c_q(\zeta) -q c_q(\zeta) c_q(\xi)^* = \langle \xi, \zeta  \rangle_U 1, \text{ for all } \xi,\zeta\in \CH.
\end{align*}

The following observation will be crucial for our purpose. 

\begin{Lemma}\label{SplitAdjoint}
Let $\xi, \xi_i, \eta_j \in \CH$, for $1\leq i\leq n$, $1\leq j\leq m$. Then, 
\begin{align*} 
c_q&(\xi)^* \bigg( (
\xi_1  \otimes \cdots \otimes \xi_n) \otimes (\eta_1 \otimes \cdots \otimes \eta_m)\bigg)\\
&= \bigg(c_q(\xi)^*
(\xi_1\otimes \cdots \otimes \xi_n)\bigg) \otimes (\eta_1 \otimes \cdots \otimes \eta_m)\\
&\indent\indent\indent\indent\indent+q^n 
(\xi_1  \otimes \cdots \otimes \xi_n) \otimes \bigg(c_q(\xi)^*(\eta_1 \otimes \cdots \otimes \eta_m) \bigg).
\end{align*} 
\end{Lemma}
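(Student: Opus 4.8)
The plan is to verify the formula by direct computation from the definition of $c_q(\xi)^*$ in \eqref{Leftmult}, treating the concatenated tensor $(\xi_1\otimes\cdots\otimes\xi_n)\otimes(\eta_1\otimes\cdots\otimes\eta_m)$ as the element $\xi_1\otimes\cdots\otimes\xi_n\otimes\eta_1\otimes\cdots\otimes\eta_m\in\CH^{\otimes_q(n+m)}$ and then splitting the defining sum over the $n+m$ tensor legs into the first $n$ legs and the last $m$ legs. Concretely, applying the annihilation formula gives
\begin{align*}
c_q(\xi)^*(\xi_1\otimes\cdots\otimes\xi_n\otimes\eta_1\otimes\cdots\otimes\eta_m)
&=\sum_{i=1}^{n}q^{i-1}\langle\xi,\xi_i\rangle_U\,\xi_1\otimes\cdots\widehat{\xi_i}\cdots\otimes\xi_n\otimes\eta_1\otimes\cdots\otimes\eta_m\\
&\quad+\sum_{j=1}^{m}q^{n+j-1}\langle\xi,\eta_j\rangle_U\,\xi_1\otimes\cdots\otimes\xi_n\otimes\eta_1\otimes\cdots\widehat{\eta_j}\cdots\otimes\eta_m,
\end{align*}
where a hat denotes omission. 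The first sum is exactly $\bigl(c_q(\xi)^*(\xi_1\otimes\cdots\otimes\xi_n)\bigr)\otimes(\eta_1\otimes\cdots\otimes\eta_m)$, and in the second sum one factors out $q^n$ to recognize $\sum_{j=1}^{m}q^{j-1}\langle\xi,\eta_j\rangle_U\,\eta_1\otimes\cdots\widehat{\eta_j}\cdots\otimes\eta_m=c_q(\xi)^*(\eta_1\otimes\cdots\otimes\eta_m)$, yielding the claimed identity.

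First I would note that both sides are (jointly) bounded multilinear in $\xi_1,\dots,\xi_n,\eta_1,\dots,\eta_m$: the left side because $c_q(\xi)^*$ is a bounded operator on $\CF_q(\CH)$ and tensoring is continuous in the $\norm{\cdot}_q$-norm by Lemma~\ref{Equivalent}, and the right side by the same reasoning applied to each summand. Hence it suffices to prove the identity on \emph{elementary} tensors $\xi_i,\eta_j\in\CH$, where the explicit formula \eqref{Leftmult} applies verbatim and the computation above is purely formal. The shift of the exponent from $i-1$ (for a leg in position $i$ within the full $(n+m)$-fold tensor, when $i>n$) to $n+(i-n)-1$ is the entire content of the factor $q^n$ appearing in front of the second term; this is the one bookkeeping point that needs care.

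There is no real obstacle here — the statement is essentially a restatement of how the annihilation operator acts termwise, and the only thing to be careful about is the indexing of inversions/positions in the concatenated tensor. If one wanted to be fully rigorous about the reduction to elementary tensors one should observe that $\CH^{\otimes_q(n+m)}$ is the $\norm{\cdot}_q$-closure of the algebraic span of elementary tensors, that both sides of the asserted equation define bounded operators from $\CH^{\otimes_q n}\otimes\CH^{\otimes_q m}$ (with the obvious cross-norm inherited from $\CH^{\otimes_q(n+m)}$) into $\CF_q(\CH)$, and that they agree on the dense subset of elementary tensors; so they agree everywhere. This last paragraph is optional and could be omitted, since the identity is standard and the authors likely intend it for elementary tensors with the general case following by linearity and continuity.
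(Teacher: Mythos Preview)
Your proof is correct and follows exactly the approach the paper indicates: the paper itself simply says ``The proof follows easily from Eq.~\eqref{Leftmult}. We leave it as an exercise,'' and your direct computation splitting the defining sum for $c_q(\xi)^*$ over the first $n$ and last $m$ tensor legs is precisely that exercise carried out. Your additional remarks on continuity are more than strictly needed, since the lemma is stated for elementary tensors $\xi_i,\eta_j\in\CH$, but they do no harm.
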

\begin{proof}
The proof follows easily from Eq. \eqref{Leftmult}. We leave it as an exercise.
\end{proof}

Following \cite{Shlyakhtenko} and \cite{Hiai}, consider the $C^*$-algebra 
$\Gamma_q( \CH_\R, U_t) =: C^{*}\{ s_q(\xi) : \xi \in \CH_\R \}$ and the von Neumann algebra
$\Gamma_q( \CH_\R, U_t)^{\prime\prime}$, where 
\begin{align*}
s_q(\xi)  = c_q(\xi) + c_q(\xi)^* , \text{ }\xi \in \CH_\R.
\end{align*}  
$\Gamma_q( \CH_\R, U_t)^{\prime\prime}$ is known as the $q$-\textit{deformed Araki-Woods von Neumann algebra}  (see \cite[\S3]{Hiai}).  The vacuum state $\varphi_{q, U}:= 
\langle \Omega, \cdot\text{ } \Omega\rangle_q$ $($also called the $q$-\textit{quasi free state}$)$, is a faithful normal state of $\Gamma_q( \CH_\R, U_t)^{\prime\prime}$ and $\CF_{q}(\CH)$ is the GNS Hilbert space of $\Gamma_q( \CH_\R, U_t)^{\prime\prime}$ associated to $\varphi_{q,U}$. Thus, $\Gamma_q( \CH_\R, U_t)^{\prime\prime}$ acting on $\CF_{q}(\CH)$ is in standard form [Hag75].  

\textit{Making slight violation of the traditional notations, we would use the symbols $\langle \cdot,\cdot\rangle_q$ and $\norm{\cdot}_{q}$ respectively to denote the inner product and two-norm of elements of the GNS Hilbert space}.

\subsection{Modular Theory}\label{modular}
Most of what follows in \S\ref{modular} and \S\ref{Commute} is taken from \cite{Shlyakhtenko,Hiai}. We need to have a convenient description of the commutant and centralizer of $\Gamma_q( \CH_\R, U_t)^{\prime\prime}$ $($which has been recorded in the case $q=0$ in \cite{Shlyakhtenko} and a similar collection of operators in the commutant has been identified in \cite{Hiai}$)$. Thus, we need to record some facts related to the modular theory of the $q$-quasi free state $\varphi_{q,U}$. The modular theory of $\Gamma_{q}(\CH_{\R}, U_{t})^{\prime\prime}$ associated to $\varphi_{q,U}$ is as follows. Let $J_{\varphi_{q,U}}$ and $\Delta_{\varphi_{q,U}}$ respectively denote the modular conjugation and modular operator associated to $\varphi_{q,U}$ and let $S_{\varphi_{q,U}}=J_{\varphi_{q,U}}\Delta_{\varphi_{q,U}}^{\frac{1}{2}}$. Then, for $n\in\N$, 
\begin{align}\label{modulartheory}
&J_{\varphi_{q,U}}(\xi_1 \otimes \cdots \otimes \xi_n) = A^{-1/2}\xi_n \otimes \cdots \otimes A^{-1/2}\xi_1, \text{ }\forall\text{ } \xi_{i}\in \mathcal{H}_{\mathbb{R}}\cap \mathfrak{D}(A^{-\half});\\
\nonumber&\Delta_{\varphi_{q,U}}(\xi_1 \otimes\cdots \otimes \xi_n ) =
A^{-1}\xi_1 \otimes \cdots\otimes A^{-1}\xi_n, \text{ }\forall\text{ } \xi_{i}\in \mathcal{H}_{\mathbb{R}}\cap \mathfrak{D}(A^{-1});\\
\nonumber&S_{\varphi_{q,U}}(\xi_1 \otimes \cdots \otimes \xi_n) =\xi_n \otimes \cdots \otimes \xi_1, \text{ }\forall\text{ } \xi_{i}\in \mathcal{H}_{\mathbb{R}}.
\end{align}
The modular automorphism group $(\sigma_{t}^{\varphi_{q,U}})$ of $\varphi_{q,U}$ is given by $\sigma_{-t}^{\varphi_{q,U}}=\text{Ad}(\CF(U_{t}))$, where $\CF(U_{t})=id\oplus \oplus_{n\geq 1} U_{t}^{\otimes_q n}$, for all $t\in \mathbb{R}$. In particular,
\begin{align}\label{modularaut}
\sigma^{\varphi_{q,U}}_{-t}(s_q(\xi)) = s_q(U_{t}\xi), \text{ for all } \xi \in \mathcal{H}_{\mathbb{R}}.
\end{align}
\subsection{Commutant}\label{Commute} Now we proceed to describe the commutant of $\Gamma_q( \CH_\R, U_t)^{\prime\prime}$. Consider the  set
\begin{align*}
\CH_\R' = \{ \xi \in \CH : \langle \xi, \eta \rangle_U \in \R \text{ for all }  \eta \in \CH_\R \}. 
\end{align*}
Note that $\overline{ \CH_\R' + i\CH_\R' } = \CH$ and $ \CH_\R' \cap i \CH^{\prime}_\R = \{ 0\}$. Let $ \zeta \in \mathfrak{D}(A^{-1/2})\cap \CH_\R$. Note that for all $\eta \in \CH_\R$, one has
\begin{align}\label{A-Inner}
\langle A^{-1/2} \zeta, \eta \rangle_U &=\langle \frac{2A^{-1/2}}{1+A^{-1}} \zeta, \eta \rangle_{\CH_\C}
=\langle \eta,  \CJ \frac{ 2A^{-1/2}}{1+A^{-1}} \zeta\rangle_{\CH_\C}\\
\nonumber&= \langle \eta,  \frac{ 2A^{1/2}}{1+A} \zeta \rangle_{\CH_\C}
= \langle \frac{ 2}{1+A^{-1}}\eta,  A^{-1/2} \zeta \rangle_{\CH_\C}\\
\nonumber& =\langle \eta,  A^{-1/2} \zeta \rangle_U.   
\end{align}
From Eq. \eqref{A-Inner}, it follows that 
\begin{equation}\label{prime}
 A^{-1/2} \zeta \in \CH_\R^\prime \text{ for all }\zeta \in \mathfrak{D}(A^{-\half})\cap \CH_\R.
\end{equation}
Also note that for $\eta,\xi\in \mathfrak{D}(A^{-1})\cap\CH_\R$, one has
\begin{align}\label{A-Inner-1}
\langle\eta,\xi \rangle_U &=\langle \frac{2}{1+A^{-1}} \eta, \xi \rangle_{\CH_\C}
=\langle \xi,  \CJ \frac{ 2}{1+A^{-1}} \eta\rangle_{\CH_\C}\\
\nonumber&= \langle \xi,  \frac{ 2}{1+A} \eta \rangle_{\CH_\C}=\langle \frac{ 2}{1+A^{-1}}\xi,  A^{-1} \eta\rangle_{\CH_\C}\\
\nonumber&=\langle \xi,  A^{-1} \eta \rangle_U=\langle A^{-\half}\xi,  A^{-\half} \eta \rangle_U \indent\text{ (as }\mathfrak{D}(A^{-1})\subseteq \mathfrak{D}(A^{-\half})).
\end{align}

Now for $\xi\in \CH$, define the right creation operator $r_q(\xi)$ on $\CF_q(\CH) $ by  
\begin{align}\label{Rightmult}
& r_q(\xi) \Omega = \xi, \\
\nonumber&r_q(\xi) (\xi_1 \otimes \cdots \otimes \xi_n) 
=  \xi_1 \otimes \cdots \otimes  \xi_n \otimes \xi, \text{ }\xi_{i}\in \CH, n\geq 1. 
\end{align}
Clearly, $r_q(\xi) = \jmath c_q(\xi) {\jmath}^{*}$, where $\jmath: \mathcal{F}_{q}(\CH) \rightarrow \mathcal{F}_{q}(\CH) $ is the unitary defined by 
\begin{align}\label{flip}
&\jmath ( \xi_1 \otimes \cdots \otimes \xi_n ) =  \xi_n \otimes \cdots \otimes \xi_1, \text{ where }\xi_{i}\in \CH \text{ for all }1\leq i\leq n, n\geq 1,\\
\nonumber&\jmath(\Omega)=\Omega.
\end{align}
Therefore, $r_q(\xi)$ is a bounded operator on $\CF_q(\CH) $ and its adjoint $r_q(\xi)^*$ is given by 
\begin{align}\label{Rightmultadj}
&r_q(\xi)^* \Omega = 0, \\
\nonumber&r_q(\xi)^* ( \xi_1 \otimes \cdots \otimes \xi_n) = \sum_{i = 1}^n q^{n-i}\langle  \xi , \xi_i \rangle_U \xi_1 \otimes \cdots \otimes \xi_{i-1} \otimes \xi_{i+1} \otimes \cdots \otimes \xi_n, \text{ }\xi_{i}\in \CH, n\geq 1. 
\end{align} 

Write $ d_q(\xi) = r_q(\xi) + r_q(\xi)^*$, $\xi\in\CH$. It is easy to observe that $ \{ d_q(\xi) : \xi \in \mathcal{H}_\R^{\prime} \} \subseteq \Gamma_q(\CH_\R, U_t)^{\prime}$. The following result establishes that the reverse inclusion is also true and its proof is similar to the one obtained in  \cite[Thm. 3.3]{Shlyakhtenko}.

\begin{Theorem}\label{commutant}
Suppose $ \xi \in \dom(A^{-1})\cap \CH_\R $. Then $J_{\varphi_{q,U}}s_q(\xi) J_{\varphi_{q,U}} = d_q(A^{-\fhalf} \xi)$.
Moreover, $\Gamma(\CH_{\R}, U_{t})^{\prime}=\{d_q(\xi): \xi\in \CH_\R^\prime\}^{\prime\prime}$.
\end{Theorem}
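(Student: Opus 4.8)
The plan is to prove the two assertions of Theorem \ref{commutant} in order. For the first formula, I would start from the explicit description of $J_{\varphi_{q,U}}$ and $S_{\varphi_{q,U}}$ in \eqref{modulartheory} and compute $J_{\varphi_{q,U}}s_q(\xi)J_{\varphi_{q,U}}$ applied to an arbitrary simple tensor $\xi_1\otimes\cdots\otimes\xi_n$ with $\xi_i\in\CH_\R$. Since $s_q(\xi)=c_q(\xi)+c_q(\xi)^*$, I would split into the creation and annihilation parts. For the creation part, $c_q(\xi)$ prepends $\xi$; after conjugating by $J_{\varphi_{q,U}}$, which reverses the order of the tensor legs and applies $A^{-1/2}$ to each, prepending $\xi$ becomes appending $A^{-1/2}\xi$, i.e. it produces $r_q(A^{-1/2}\xi)$ up to the powers of $A^{-1/2}$ which cancel on the outer legs. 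The annihilation part is the delicate one because of the $q$-weights: $c_q(\xi)^*$ carries weights $q^{i-1}$ counted from the left, whereas $r_q(\eta)^*$ in \eqref{Rightmultadj} carries weights $q^{n-i}$ counted from the right; reversing the tensor via $\jmath$ interchanges these, and the inner products transform via the identity $\langle A^{-1/2}\xi,\eta\rangle_U=\langle\xi,A^{-1/2}\eta\rangle_U$ established in \eqref{A-Inner}. Assembling the two pieces gives $J_{\varphi_{q,U}}s_q(\xi)J_{\varphi_{q,U}}=r_q(A^{-1/2}\xi)+r_q(A^{-1/2}\xi)^*=d_q(A^{-1/2}\xi)$. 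One must be a little careful about domains: the formula is first checked on the dense set of vectors with legs in $\mathfrak{D}(A^{-1})\cap\CH_\R$, and then both sides are bounded operators (by the boundedness of $s_q$, $d_q$ and the unitarity of $J_{\varphi_{q,U}}$), so the identity extends by continuity; here \eqref{prime} guarantees $A^{-1/2}\xi\in\CH_\R'$, so $d_q(A^{-1/2}\xi)$ indeed makes sense as an element of $\Gamma_q(\CH_\R,U_t)'$.

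\textbf{From the formula to the commutant.} Granting the first assertion, I would deduce $\Gamma_q(\CH_\R,U_t)'=\{d_q(\xi):\xi\in\CH_\R'\}''$ as follows. The inclusion $\supseteq$ is the easy observation recorded just before the theorem, namely $\{d_q(\xi):\xi\in\CH_\R'\}\subseteq\Gamma_q(\CH_\R,U_t)'$, which one checks by verifying that $r_q(\eta)$ (and hence $d_q(\eta)$) commutes with $c_q(\zeta)$ and $c_q(\zeta)^*$ whenever $\langle\eta,\zeta\rangle_U\in\R$ for all $\zeta\in\CH_\R$ — a direct computation from \eqref{Leftmult}, \eqref{Rightmult}, \eqref{Rightmultadj} using Lemma \ref{SplitAdjoint} and its right-handed analogue, with the reality hypothesis exactly cancelling the cross terms. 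For the reverse inclusion $\subseteq$, I would invoke the Tomita--Takesaki theorem: $\Gamma_q(\CH_\R,U_t)'=J_{\varphi_{q,U}}\,\Gamma_q(\CH_\R,U_t)\,J_{\varphi_{q,U}}$. Since $\Gamma_q(\CH_\R,U_t)''$ is generated by $\{s_q(\xi):\xi\in\CH_\R\}$, its commutant is generated (as a von Neumann algebra) by $\{J_{\varphi_{q,U}}s_q(\xi)J_{\varphi_{q,U}}:\xi\in\CH_\R\}$. Using the first assertion on the dense subspace $\mathfrak{D}(A^{-1})\cap\CH_\R$ of $\CH_\R$, these generators are $d_q(A^{-1/2}\xi)$ for $\xi\in\mathfrak{D}(A^{-1})\cap\CH_\R$; by \eqref{prime} each such $A^{-1/2}\xi$ lies in $\CH_\R'$, so the commutant is contained in $\{d_q(\eta):\eta\in\CH_\R'\}''$, provided the relevant vectors $A^{-1/2}\xi$ are $\norm{\cdot}_U$-dense in $\CH_\R'$ (or at least generate the same von Neumann algebra). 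Establishing that density — equivalently, that the $s_q(\xi)$ with $\xi$ in a core of $A^{-1}$ already generate $\Gamma_q(\CH_\R,U_t)''$ and dually on the commutant side — is where I expect the main technical work to lie; it is a standard but nontrivial point handled in \cite{Shlyakhtenko} for $q=0$, and one checks it carries over because the argument only uses boundedness of $s_q$, $d_q$, strong continuity in $\xi\in\CH$, and the fact that $\mathfrak{D}(A^{-1})\cap\CH_\R$ is dense in $\CH_\R$ (from strong continuity of $(U_t)$).

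\textbf{Main obstacle.} The genuinely delicate step is the bookkeeping of $q$-weights in the annihilation term of $J_{\varphi_{q,U}}s_q(\xi)J_{\varphi_{q,U}}$: one must verify that conjugation by the order-reversing isometry built into $J_{\varphi_{q,U}}$ converts the left-normalised weights $q^{i-1}$ of $c_q(\xi)^*$ into precisely the right-normalised weights $q^{n-i}$ of $r_q(A^{-1/2}\xi)^*$, and that the powers of $A^{\pm 1/2}$ distributed over all the legs collapse — via \eqref{A-Inner} and \eqref{A-Inner-1} — to leave $A^{-1/2}$ acting only on the argument $\xi$. This is exactly the $q$-deformed analogue of the computation in \cite[Thm.~3.3]{Shlyakhtenko}; the extra $q$-combinatorics is what needs care, but once the identity is verified on simple tensors with legs in $\mathfrak{D}(A^{-1})\cap\CH_\R$, everything else follows by the density and Tomita--Takesaki arguments sketched above.
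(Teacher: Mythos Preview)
Your proposal is correct and follows essentially the same route as the paper: verify $J s_q(\xi) J = d_q(A^{-1/2}\xi)$ on simple tensors with legs in $\mathfrak{D}(A^{-1})\cap\CH_\R$ using \eqref{modulartheory}, \eqref{A-Inner-1} and the fact that order-reversal swaps the $q^{i-1}$ weights of $c_q(\xi)^*$ for the $q^{n-i}$ weights of $r_q(\cdot)^*$, then combine Tomita--Takesaki with the easy inclusion $\{d_q(\xi):\xi\in\CH_\R'\}\subseteq\Gamma_q(\CH_\R,U_t)'$. One minor simplification: you do not need density of $\{A^{-1/2}\xi:\xi\in\mathfrak{D}(A^{-1})\cap\CH_\R\}$ in $\CH_\R'$; the paper simply uses that $\xi\mapsto s_q(\xi)$ is norm-continuous (indeed $\|s_q(\zeta)\|=\frac{2}{\sqrt{1-q}}\|\zeta\|_U$) together with density of $\mathfrak{D}(A^{-1})\cap\CH_\R$ in $\CH_\R$ to conclude that $\{Js_q(\xi)J:\xi\in\mathfrak{D}(A^{-1})\cap\CH_\R\}$ already generates the full commutant, which is then contained in $\{d_q(\eta):\eta\in\CH_\R'\}''$ by the first assertion and \eqref{prime}.
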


\begin{proof}
Fix $n\geq 1$ and let $ \eta_1, \eta_2, \cdots, \eta_n \in \dom(A^{-1})\cap\CH_\R$. Then from Eq. \eqref{modulartheory}, we have  
\begin{align*}
J_{\varphi_{q,U}}&s_q(\xi) ( \eta_1 \otimes \eta_2 \otimes \cdots \otimes \eta_n )\\ 
&= J_{\varphi_{q,U}}\left ( \sum_{i = 1}^n q^{(i-1)}\langle \xi, \eta_i \rangle_U \eta_1 \otimes \cdots \otimes \eta_{i-1} \otimes \eta_{i+1}  \cdots\otimes  \eta_n\right)
 + J_{\varphi_{q,U}} (\xi \otimes \eta_1 \otimes \cdots \otimes \eta_n)\\
&=\sum_{i = 1}^n q^{(i-1)} \langle \eta_i, \xi \rangle_{U} A^{-\half} \eta_n \otimes \cdots  \otimes A^{-\half}\eta_{i+1} \otimes A^{-\half} \eta_{i-1}\otimes \cdots \otimes A^{-\half}\eta_1\\
&\indent\indent\indent\indent+ A^{-\half} \eta_n \otimes \cdots \otimes A^{-\half}\eta_1 \otimes A^{-\half}\xi \text{ (since }\mathfrak{D}(A^{-1})\subseteq \mathfrak{D}(A^{-\half}))\\
&= \sum_{i = 1}^n q^{(i-1)}\langle \xi, A^{-1}\eta_i \rangle_{U} A^{-\half} \eta_n \otimes \cdots  \otimes A^{-\half}\eta_{i+1} \otimes A^{-\half} \eta_{i-1} \otimes  \cdots \otimes  A^{-\half}\eta_1 \\ 
&\indent\indent\indent\indent + A^{-\half} \eta_n \otimes \cdots \otimes A^{-\half}\eta_1 \otimes A^{-\half}\xi  \text{ (by Eq. }\eqref{A-Inner-1})\\
&=\sum_{i = 1}^n  q^{(i-1)} \langle A^{-\half} \xi, A^{-\half}\eta_i \rangle_{U} A^{-\half} \eta_n \otimes \cdots \otimes A^{-\half}\eta_{i+1} \otimes A^{-\half} \eta_{i-1} \otimes  \cdots \otimes  A^{-\half}\eta_1 \\
& \indent\indent\indent\indent+ A^{-\half} \eta_n \otimes \cdots \otimes A^{-\half}\eta_1 \otimes A^{-\half}\xi  \text{ (since }\mathfrak{D}(A^{-1})\subseteq \mathfrak{D}(A^{-\half}))\\
&= d_q(A^{-\half}\xi)J_{\varphi_{q,U}} ( \eta_1 \otimes \eta_2 \otimes \cdots \otimes \eta_n) \text{ (from Eq. \eqref{Rightmult} and Eq. \eqref{Rightmultadj})}. 
\end{align*}
It follows that $J_{\varphi_{q,U}}s_q(\xi)J_{\varphi_{q,U}} = d_q(A^{-\fhalf}\xi)$. 

Since $\Gamma_q(\CH_\R, U_t)^{\prime\prime}$ is in standard form in $\CF_q(\CH)$, so from the fundamental theorem of Tomita-Takesaki theory $\Gamma_q(\CH_\R, U_t)^{\prime} = J_{\varphi_{q,U}} \Gamma_q(\CH_\R, U_t)^{\prime\prime}J_{\varphi_{q,U}}$. Again from Eq. 
\eqref{prime}, one has $ A^{-\half}\xi \in \CH_\R^\prime$ for all $ \xi \in \dom(A^{-\fhalf})\cap \CH_\R$. By what we have proved so far, it follows that  $ \{J_{\varphi_{q,U}}s_{q}(\xi)J_{\varphi_{q,U}}:\xi\in \mathfrak{D}(A^{-1})\cap \CH_\R\}\subseteq  \{d_q(\xi): \xi\in \CH_\R^\prime\}^{\prime\prime}$. Note that from Eq. \eqref{Leftmult} it follows that, if $\CH_\R\ni\xi_{n}\rightarrow \xi\in \CH_\R$ in $\norm{\cdot}_{\CH_\C}$ $($equivalently in $\norm{\cdot}_{U})$, then
$s_q(\xi_n)\rightarrow s_q(\xi)$ in $\norm{\cdot}$ $($as $\norm{s_q(\zeta)}=\frac{2}{\sqrt{1-q}}\norm{\zeta}_{U}$ for all $\zeta\in\CH_\R)$. Consequently,  $\dom(A^{-1})\cap \CH_\R$ being dense in $\CH_\R$, it follows that $\Gamma_q(\CH_\R, U_t)^{\prime}\subseteq  \{d_q(\xi): \xi\in \CH_\R^\prime\}^{\prime\prime}$. 
Since the reverse inclusion is straight forward to check, the proof is complete. 
\end{proof}

\subsection{Notations and some technical facts} In this paper, we are interested in the factoriality of $\Gamma_q(\CH_\R, U_t)^{\prime\prime}$ and the orthogonal representation remain arbitrary but fixed. Thus, to reduce notation, we will write $M_q=\Gamma_q(\CH_\R, U_t)^{\prime\prime}$ and $\varphi=\varphi_{q,U}$. We will also denote $J_{\varphi_{q,U}}$ by $J$ and $\Delta_{\varphi_{q,U}}$ by $\Delta$. As $\Omega$ is separating for both $M_q$ and $M_q^{\prime}$, for $\zeta\in M_q\Omega$ and $\eta\in M_q^{\prime}\Omega$ there exist unique $x_{\zeta}\in M_q$ and $x^{\prime}_{\eta}\in M_q^{\prime}$ such that $\zeta =x_{\zeta}\Omega$ and $\eta=x_{\eta}^{\prime}\Omega$. In this case, we will write 
\begin{align}\label{sq}
s_q(\zeta)=x_{\zeta} \text{ and } d_q(\eta)=x_{\eta}^{\prime}. 
\end{align}
Thus, for example, as $\xi\in M_q\Omega$ for every $\xi\in \CH_\R$, so $s_q(\xi+i\eta)=s_q(\xi)+is_q(\eta)$ for all $\xi,\eta\in\CH_\R$.

\noindent {\bf Caution}: Note that $c_q(\xi)$ and $r_q(\xi)$ are bounded operators for all $\xi \in \CH$. 
Write 
\begin{align*}
\widetilde{s}_q(\xi) = c_q(\xi) + c_q(\xi)^* \text{ and } \widetilde{d}_q(\xi) = r_q(\xi) + r_q(\xi)^*, \text{ }\xi \in \CH.
\end{align*}
Note that if $\xi \in \CH_\R $, then  $\widetilde{s}_q(\xi)  = s_q(\xi)$, and if 
$\xi \in \CH_\R^\prime$ then $\widetilde{d}_q(\xi)= d_q(\xi)$. If $\xi = \xi_1 + i \xi_2 $ for $\xi_1, \xi_2 \in \CH_\R$ and $\xi_2 \neq 0 $, then note that 
$\widetilde{s}_q(\xi)\neq s_q(\xi) $.

Write $\mathcal{Z}(M_q)= M_q\cap M_q^{\prime}$. Let $M_q^{\varphi}$ denote the centralizer of $M_q$ associated to the state $\varphi$. For $\xi \in \CH_\R$, denote  $M_{\xi} =vN(s_q(\xi))$. Note that $M_\xi$ is abelian as $s_q(\xi)$ is self-adjoint. To understand the Hilbert space $\CF_{q}(\CH)$ as a bimodule over $M_{\xi}$, it will be convenient for us to work with appropriate choice of orthonormal basis of $\CH_\R$ with respect to $\langle\cdot,\cdot\rangle_{\CH_\C}$. We say that a vector $\xi\in \CH_\R$ is \textit{analytic}, if $s_{q}(\xi)$ is analytic for $(\sigma_t^{\varphi})$. 

\begin{Proposition}\label{analytic}
$\CH_\R$ has an orthonormal basis with respect to $\langle\cdot,\cdot\rangle_{\CH_\C}$ comprising of analytic vectors. Further, if $\xi_0\in\CH_\R$ be a unit vector such that $U_t\xi_0=\xi_0$ for all $t\in\R$, then such an orthonormal basis of $\CH_\R$ can be chosen so that it includes $\xi_0$. 
\end{Proposition}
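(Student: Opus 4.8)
The plan is to produce the orthonormal basis by working inside the spectral decomposition of the analytic generator $A$ of $(U_t)$ and exploiting that $(\sigma_t^\varphi)$ acts on the one-particle space essentially by $\Ad(U_t)$. Recall from Eq. \eqref{modularaut} that $\sigma_{-t}^{\varphi}(s_q(\xi)) = s_q(U_t\xi)$, so that analyticity of $s_q(\xi)$ for $(\sigma_t^\varphi)$ is controlled by analyticity of $t\mapsto U_t\xi$ as a $\CH$-valued (equivalently $\CH_\C$-valued, by Eq. \eqref{Liftisunitary}) function. Since $U_t = A^{it}$ on $\CH_\C$, a vector $\xi$ generates an entire $\CH_\C$-valued orbit precisely when $\xi$ lies in the range of a bounded spectral projection of $A$, i.e. $\xi \in E_A([1/n,n])(\CH_\C)$ for some $n$; call the union of these the \emph{bounded spectral subspace} $\CH_\C^{\mathrm{an}}$, which is dense in $\CH_\C$. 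The subtlety is that we need vectors in $\CH_\R$, not merely in $\CH_\C$: $\CH_\C^{\mathrm{an}}$ need not be spanned by real analytic vectors \emph{a priori}.

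First I would observe that $\CH_\C^{\mathrm{an}}$ is invariant under the anti-linear involution $\CJ$. Indeed, since $\CJ A = A^{-1}\CJ$, we get $\CJ E_A([1/n,n]) = E_A([1/n,n])\CJ$ (the interval $[1/n,n]$ being symmetric under $x\mapsto 1/x$), so $\CJ$ preserves $E_A([1/n,n])(\CH_\C)$. Hence each $E_A([1/n,n])(\CH_\C)$ is the complexification of its real part $\CH_\R^{(n)} := E_A([1/n,n])(\CH_\C)\cap \CH_\R = \{\xi \in \CH_\R : \CJ\xi = \xi,\ \xi \in E_A([1/n,n])(\CH_\C)\}$; equivalently $\CH_\R^{(n)} = \{\re(\eta), \im(\eta)\,/\,i : \eta \in E_A([1/n,n])(\CH_\C)\}$ spans a real subspace whose complexification is $E_A([1/n,n])(\CH_\C)$. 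Therefore $\bigcup_n \CH_\R^{(n)}$ spans a dense real subspace of $\CH_\R$ (its closure complexifies to the closure of $\CH_\C^{\mathrm{an}}$, which is all of $\CH_\C$, so the closure is all of $\CH_\R$), and every vector in it is analytic because its $\CH_\C$-orbit under $U_t = A^{it}$ extends entirely. A countable dense subset of $\bigcup_n \CH_\R^{(n)}$, run through Gram–Schmidt with respect to $\langle\cdot,\cdot\rangle_{\CH_\C}$, yields the desired orthonormal basis of analytic vectors (Gram–Schmidt stays inside each $\CH_\R^{(n)}$ since these are increasing subspaces, so the basis vectors remain analytic).

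For the second assertion, suppose $\xi_0\in\CH_\R$ is a unit vector with $U_t\xi_0 = \xi_0$ for all $t$. Then $A\xi_0 = \xi_0$, so $\xi_0 \in E_A(\{1\})(\CH_\C) \subseteq E_A([1/n,n])(\CH_\C)$ for every $n$, and its orbit $t\mapsto U_t\xi_0 = \xi_0$ is (trivially) entire, so $\xi_0$ is itself analytic. Now perform the Gram–Schmidt procedure above starting with $\xi_0$ as the first vector: since $\CH_\R^{(n)} = \R\xi_0 \oplus (\CH_\R^{(n)}\cap \xi_0^\perp)$, the orthogonalization of a dense sequence in $\bigcup_n\CH_\R^{(n)}$ against $\xi_0$ stays inside $\bigcup_n\CH_\R^{(n)}$, hence consists of analytic vectors, and together with $\xi_0$ gives an orthonormal basis of $\CH_\R$ of analytic vectors containing $\xi_0$.

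I expect the only real obstacle to be the $\CJ$-invariance step, i.e. checking carefully that the bounded spectral subspaces of $A$ are complexifications of real subspaces of $\CH_\R$; once that is in place, the passage to a basis is routine Gram–Schmidt. A secondary point to get right is the precise link between analyticity of $s_q(\xi)$ as an element of $M_q$ for $(\sigma_t^\varphi)$ and the entire extendibility of $t\mapsto U_t\xi$, which follows from $\|s_q(\zeta)\| = \tfrac{2}{\sqrt{1-q}}\|\zeta\|_U$ (noted in the proof of Theorem \ref{commutant}) so that $\sigma_z^\varphi(s_q(\xi)) = s_q(A^{-iz}\xi)$ makes sense and is norm-analytic in $z$ exactly when $z\mapsto A^{-iz}\xi$ is; this is automatic on each bounded spectral subspace.
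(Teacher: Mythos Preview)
Your argument is correct and complete, but it takes a different route from the paper's. The paper produces analytic vectors by Gaussian regularization: for $\zeta\in\CH_\R$ and $r>0$ it sets $\zeta_r=\sqrt{r/\pi}\int_\R e^{-rt^2}U_t\zeta\,dt$, checks directly that $z\mapsto s_q\bigl(\sqrt{r/\pi}\int_\R e^{-r(t+z)^2}U_t\zeta\,dt\bigr)$ is an entire $M_q$-valued extension of $\sigma_\cdot^\varphi(s_q(\zeta_r))$, and uses $\zeta_r\to\zeta$ to get a dense subspace $\mathfrak D_0\subseteq\CH_\R$ of analytic vectors; the basis is then extracted from $\mathfrak D_0$ by Gram--Schmidt, exactly as you do at the end. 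The advantage of the paper's approach is that it stays inside $\CH_\R$ automatically (since $(U_t)$ preserves $\CH_\R$), so no $\CJ$-invariance argument is needed. Your approach via bounded spectral projections $E_A([1/n,n])$ is equally valid and in some ways more transparent---analyticity of $z\mapsto A^{-iz}\xi$ is immediate once $\log A$ is bounded on the subspace---but the price is the extra step of checking that the symmetric spectral intervals are $\CJ$-invariant (using $\CJ A=A^{-1}\CJ$) so that the spectral subspaces are genuine complexifications of real subspaces. Both arguments then finish identically: a dense real subspace of analytic vectors plus Gram--Schmidt, prepending $\xi_0$ when a fixed vector is given. One small point: the norm identity $\norm{s_q(\zeta)}=\tfrac{2}{\sqrt{1-q}}\norm{\zeta}_U$ you quote is stated only for $\zeta\in\CH_\R$, so for $\zeta\in\CH_\C$ you should use the resulting bound $\norm{s_q(\zeta)}\le\tfrac{2\sqrt 2}{\sqrt{1-q}}\norm{\zeta}_{\CH_\C}$ via real/imaginary parts; this is all that is needed for norm-analyticity of $z\mapsto s_q(A^{-iz}\xi)$.
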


\begin{proof}
Note that $U_t=A^{it}$ for all $t\in \R$. 
For $\zeta\in \CH_\R$ and $r>0$, let $\zeta_{r}=\sqrt{\frac{r}{\pi}}\int_{\R} e^{-rt^{2}}U_{t}\zeta dt$. It is well known that $\zeta_r\rightarrow \zeta$ in $\norm{\cdot}_{\CH_\C}$ $($equivalently in $\norm{\cdot}_{U}$ as the vectors involved are real$)$ as $r\rightarrow 0$. As $(U_{t})$ reduces $\CH_\R$ and $\zeta\in M_q\Omega$, so $\zeta_r\in M_q\Omega$ for all $r>0$. Fix $r>0$. Consider $s_{q}(\zeta_{r})\in M_q$ $($as defined in Eq. \eqref{sq}$)$. Then, by Eq. \eqref{modularaut} it follows that
\begin{align}\label{analytic_extension}
\sigma_{s}^{\varphi}(s_{q}(\zeta_{r}))=s_{q}(\sqrt{\frac{r}{\pi}}\int_{\R} e^{-r(t+s)^{2}}U_{t}\zeta dt), \text{ }s\in \R.
\end{align}
Note that $f_{\zeta_r}(z)=\sqrt{\frac{r}{\pi}}\int_{\R} e^{-r(t+z)^{2}}U_{t}\zeta dt \in \CH_\C$ for all $z\in \C$. Thus, $s_q(f_{\zeta_r}(z))$ is defined by Eq. \eqref{sq} and belongs to $M_q$. It is easy to see that $s_q(f_{\zeta_r}(\cdot)): \C\rightarrow M_q$ is an analytic extension of $\R\ni s\mapsto \sigma_{s}^{\varphi}(s_{q}(\zeta_{r}))$. Thus, $\zeta_{r}$ is analytic. 

Let $\mathfrak{D}_{0}=\text{span}_\R\{\zeta_{r}: r>0, \zeta\in\CH_\R\}$. Note that $\mathfrak{D}_{0}$ $($consisting of analytic vectors$)$ is dense in $(\CH_\R,\langle\cdot,\cdot\rangle_{\CH_\C})$. Finally, use the fact that any dense subspace of a separable $($real$)$ Hilbert space has an orthonormal basis consisting of elements from the dense subspace. The rest is clear. We omit the details. 
\end{proof}

\begin{Remark}\label{analytic_extension_basis}
Note that from Eq. \eqref{sq} and Eq. \eqref{analytic_extension}, it follows that $\sigma^{\varphi}_z(s_q(\zeta_r))=s_q(f_{\zeta_r}(z))$ for all $z\in \C$ and $r>0$ $($where $\zeta_r\in\mathfrak{D}_{0}$ as in the proof of Prop. \ref{analytic}$)$. From Eq. \eqref{modulartheory}, it follows that 
\begin{align*}
A^{-\half}\zeta_r=\sigma^{\varphi}_{-\frac{i}{2}}(s_q(\zeta_r))\Omega=s_q\bigl(f_{\zeta_r}(-\frac{i}{2})\bigr)\Omega = \sqrt{\frac{r}{\pi}}\int_{\R} e^{-r(t-\frac{i}{2})^{2}}U_{t}\zeta dt.
\end{align*} 
Decomposing into real and imaginary parts and arguing as in the proof of Prop. \ref{analytic}, it is easy to check that $A^{-\half}\zeta_r$ is analytic for all $r>0$ as well. 
\end{Remark}

Note that $\langle\cdot,\cdot\rangle_{\CH_\C}$ is clearly not the inner product in the GNS space but is connected to the latter. In fact, the following observation will be crucial all throughout the paper and tries to find instances when `orthogonality' with respect to one inner product entails `certain orthogonality' with respect to the other. The following is an analogue of the fact that a word in the free group $\mathbb{F}_2=\langle a,b\rangle$  is orthogonal to the generator masa $vN(a)$ with respect to the trace if and only if one letter in the word is $b$ or $b^{-1}$. 

\begin{Lemma}\label{Orthogonality}
Let $\xi_0\in\CH_\R$ be a unit vector such that $U_t \xi_0 =\xi_0$ for all $t$. Then the following hold.
\begin{enumerate}
\item For $\eta\in \CH_\R+ i\CH_\R$ one has
\begin{align*}
\langle \xi_0, \eta\rangle_U=\langle \xi_0,\eta\rangle_{\CH_\C}.
\end{align*}
\item Let $\xi_1,\cdots,\xi_n\in\CH_\R$ be non zero vectors. If $k\geq 1$, then 
\begin{align*}
\langle \xi_0^{\otimes k}, \xi_1\otimes\cdots\otimes\xi_n\rangle_{q}=0,
\end{align*}
if and only if $n\neq k$ or $\langle \xi_0,\xi_i\rangle_{\CH_\C}=0$ for at least one $i$.
\item Let $\xi_1,\cdots,\xi_n\in\CH_\R\cap \mathfrak{D}(A^{-\frac{1}{2}})$ be non zero vectors. If $k\geq 1$, then 
\begin{align*}
\langle \xi_0^{\otimes k}, A^{-\frac{1}{2}}\xi_1\otimes\cdots\otimes A^{-\frac{1}{2}}\xi_n\rangle_{q}=0,
\end{align*}
if and only if $n\neq k$ or $\langle \xi_0,\xi_i\rangle_{\CH_\C}=0$ for at least one $i$.
\end{enumerate}
\end{Lemma}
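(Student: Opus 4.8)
The plan is to prove the three parts in order, using the key structural fact that $\xi_0$ is fixed by $U_t$, hence $A\xi_0 = \xi_0$ (since $U_t = A^{it}$ and $\xi_0$ is in the fixed subspace, it lies in $\Ker(H)$, so $A^{-1/2}\xi_0 = \xi_0$), together with the relation $\langle\xi,\eta\rangle_U = \langle\frac{2}{1+A^{-1}}\xi,\eta\rangle_{\CH_\C}$.

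For part (1): first I would observe that since $A\xi_0 = \xi_0$, we have $\frac{2}{1+A^{-1}}\xi_0 = \xi_0$. The operator $\frac{2}{1+A^{-1}}$ is positive and self-adjoint on $\CH_\C$, so for any $\eta \in \CH_\C$,
\[
\langle \xi_0,\eta\rangle_U = \langle \tfrac{2}{1+A^{-1}}\xi_0,\eta\rangle_{\CH_\C} = \langle \xi_0,\eta\rangle_{\CH_\C},
\]
which gives the claim (the restriction to $\eta \in \CH_\R + i\CH_\R$ is just to stay in the common domain, though by density it extends).

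For part (2): I would expand $\langle \xi_0^{\otimes k},\xi_1\otimes\cdots\otimes\xi_n\rangle_q$ using the defining formula \eqref{qFock}. The Kronecker $\delta_{k,n}$ immediately forces $n = k$ for nonvanishing, so assume $n = k$; then the inner product equals $\sum_{\pi\in S_k} q^{i(\pi)}\prod_{j=1}^k \langle\xi_0,\xi_{\pi(j)}\rangle_U = \sum_{\pi\in S_k} q^{i(\pi)}\prod_{j=1}^k \langle\xi_0,\xi_j\rangle_U$ since the product over a permutation is the same as over the identity. By part (1), each $\langle\xi_0,\xi_j\rangle_U = \langle\xi_0,\xi_j\rangle_{\CH_\C}$, so the whole sum is $\big(\sum_{\pi\in S_k} q^{i(\pi)}\big)\prod_{j=1}^k \langle\xi_0,\xi_j\rangle_{\CH_\C} = [k]_q!\,\prod_{j=1}^k\langle\xi_0,\xi_j\rangle_{\CH_\C}$. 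The point worth checking — the only mild obstacle — is that $[k]_q! \neq 0$ for all $q \in (-1,1)$ and $k \geq 1$: this is exactly the strict positivity of $P_q$ recorded in Lemma \ref{Equivalent} (equivalently, the computation \eqref{Normelt} gives $\|\xi^{\otimes k}\|_q^2 = [k]_q! > 0$). Hence the inner product vanishes iff some $\langle\xi_0,\xi_j\rangle_{\CH_\C} = 0$, as desired.

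For part (3): the argument is identical to part (2) with $\xi_j$ replaced by $A^{-1/2}\xi_j$, reducing the claim to showing $\langle\xi_0, A^{-1/2}\xi_j\rangle_U = \langle\xi_0,\xi_j\rangle_{\CH_\C}$. Here I would use that $A^{-1/2}\xi_0 = \xi_0$ and that $A^{-1/2}$ (hence the bounded operator $\frac{2}{1+A^{-1}}$ as well) commutes with the spectral projections of $A$: precisely,
\[
\langle\xi_0, A^{-1/2}\xi_j\rangle_U = \langle A^{-1/2}\xi_0,\xi_j\rangle_U = \langle\xi_0,\xi_j\rangle_U = \langle\xi_0,\xi_j\rangle_{\CH_\C},
\]
where the middle equality uses $A^{-1/2}\xi_0 = \xi_0$ (self-adjointness of $A^{-1/2}$ on $\CH$, which is legitimate since $\xi_0 \in \dom(A^{-1/2})$), and the last equality is part (1). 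Then the same expansion as in part (2) finishes the proof. I expect the whole proof to be routine; the only genuine content is invoking $[k]_q! > 0$ from Lemma \ref{Equivalent}, and being slightly careful that $\xi_0$ genuinely satisfies $A\xi_0 = \xi_0$ and lies in the relevant domains so that the spectral-calculus manipulations are valid.
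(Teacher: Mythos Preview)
Your proof is correct and follows essentially the same approach as the paper: both use $\frac{2}{1+A^{-1}}\xi_0=\xi_0$ for part~(1), expand via Eq.~\eqref{qFock} and factor out $[k]_q!\prod_j\langle\xi_0,\xi_j\rangle_{\CH_\C}$ for part~(2), and reduce part~(3) to part~(2) via $A^{-1/2}\xi_0=\xi_0$. You are slightly more explicit than the paper in justifying $[k]_q!\neq 0$, which is a nice touch.
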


\begin{proof}
\noindent $(1)$. Note that  $\frac{2}{1+A^{-1}}\xi_0=\xi_0$. Thus, the result follows from the definition of $\langle\cdot,\cdot\rangle_{U}$.

\noindent$(2)$. Note that 
\begin{align*}
\langle \xi_0^{\otimes k}, \xi_1\otimes\cdots\otimes\xi_n\rangle_{q} &=\delta_{n,k} \langle \xi_0^{\otimes n}, \xi_1\otimes\cdots\otimes\xi_n\rangle_{q}\\
&=\delta_{n,k}\sum_{\pi\in S_n} q^{i(\pi)} \prod_{j=1}^n\langle \xi_0, \xi_{\pi(j)}\rangle_{U} \text{ (by Eq. \eqref{qFock})}\\
&=\delta_{n,k}\sum_{\pi\in S_n} q^{i(\pi)} \prod_{j=1}^n\langle \frac{2}{1+A^{-1}}\xi_0, \xi_{\pi(j)}\rangle_{\CH_\C} \\ 
&=\delta_{n,k}\sum_{\pi\in S_n} q^{i(\pi)} \prod_{j=1}^n\langle \xi_0, \xi_{\pi(j)}\rangle_{\CH_\C} \\
&=\delta_{n,k}\prod_{j=1}^n\langle \xi_0, \xi_{j}\rangle_{\CH_\C}\sum_{\pi\in S_n} q^{i(\pi)}  \\
&=\delta_{n,k}\prod_{j=1}^n\langle \xi_0, \xi_{j}\rangle_{\CH_\C}[n]_q!.
\end{align*}
The rest is immediate. 

\noindent$(3)$. First note that as $\xi_i\in\CH_\R$, so $A^{-\frac{1}{2}}\xi_i\in \CH_\C$. Observe that 
\begin{align*}
\langle \xi_0^{\otimes k}, A^{-\frac{1}{2}}\xi_1\otimes\cdots\otimes A^{-\frac{1}{2}}\xi_n\rangle_{q} &=\delta_{n,k} \langle \xi_0^{\otimes n}, A^{-\frac{1}{2}}\xi_1\otimes\cdots\otimes A^{-\frac{1}{2}}\xi_n\rangle_{q}\\
&=\delta_{n,k}\sum_{\pi\in S_n} q^{i(\pi)} \prod_{j=1}^n\langle \xi_0, A^{-\frac{1}{2}}\xi_{\pi(j)}\rangle_{U} \text{ (by Eq. \eqref{qFock})}\\ 
&=\delta_{n,k}\sum_{\pi\in S_n} q^{i(\pi)} \prod_{j=1}^n\langle \xi_0, \xi_{\pi(j)}\rangle_{U}\\
&=\langle \xi_0^{\otimes k}, \xi_1\otimes\cdots\otimes\xi_n\rangle_{q}.
\end{align*}
Thus, the result follows from $(2)$ above.
\end{proof}


\section{Centralizer}\label{SectionCentralizer}

A convenient description of the centralizer $M_q^\varphi=\{x\in M_q: \sigma_t^\varphi(x)=x \text{ }\forall \text{ }t\in \R\}$ is a component we need to decide the factoriality and type of $M_q$. In this section, we borrow ideas from Thm. 2.2 of  \cite{Hiai} and show that the centralizer of $M_q$ depends on the almost periodic part of the orthogonal representation $(U_t)$. We need some intermediate results.

\begin{Lemma}\label{VectorinMq}
The following hold.
\begin{enumerate}
\item The vector $\xi_{1}\otimes\cdots\otimes \xi_{n}\in M_q\Omega $ for any $\xi_i\in\CH_\R$, $1\leq i\leq n$ and $n\in \mathbb{N}$.
\item The vector $\xi_{1}\otimes\cdots\otimes \xi_{n}\in M_q^{\prime}\Omega $ for any $\xi_i\in\mathfrak{D}(A^{-\half})\cap\CH_\R$, $1\leq i\leq n$ and $n\in \mathbb{N}$.
\end{enumerate}
\end{Lemma}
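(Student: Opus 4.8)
The plan is to prove both statements by producing, for each candidate vector, an explicit bounded operator in $M_q$ (resp.\ $M_q'$) that sends $\Omega$ to that vector, arguing by induction on $n$.

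\medskip

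For part $(1)$, I would induct on $n$. The case $n=1$ is immediate since $s_q(\xi_1)\Omega = c_q(\xi_1)\Omega + c_q(\xi_1)^*\Omega = \xi_1 \in M_q\Omega$ by definition. For the inductive step, suppose $\xi_2\otimes\cdots\otimes\xi_n = x\Omega$ for some $x\in M_q$ with $\xi_2,\dots,\xi_n\in\CH_\R$. The natural guess is to consider $s_q(\xi_1)x\Omega$. Using $s_q(\xi_1) = c_q(\xi_1)+c_q(\xi_1)^*$ and Eq.~\eqref{Leftmult}, one computes
\begin{align*}
s_q(\xi_1)(\xi_2\otimes\cdots\otimes\xi_n) = \xi_1\otimes\xi_2\otimes\cdots\otimes\xi_n + \sum_{i=2}^{n} q^{i-2}\langle \xi_1,\xi_i\rangle_U\, \xi_2\otimes\cdots\otimes\widehat{\xi_i}\otimes\cdots\otimes\xi_n,
\end{align*}
where $\widehat{\xi_i}$ denotes omission. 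The second term is a linear combination of tensors of length $n-2$, each of which lies in $M_q\Omega$ by the induction hypothesis (applied to smaller $n$), say equal to $y\Omega$ with $y\in M_q$. Hence $\xi_1\otimes\cdots\otimes\xi_n = (s_q(\xi_1)x - y)\Omega \in M_q\Omega$, completing the induction. (One should be slightly careful with the base of the induction covering lengths $n-2, n-3,\dots$; running a strong induction on $n$ handles this cleanly, with $n=0$ giving $\Omega = 1\cdot\Omega$ and $n=1$ as above.)

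\medskip

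For part $(2)$, I would argue entirely analogously but using the right creation/annihilation operators $d_q(\xi) = r_q(\xi)+r_q(\xi)^*$ and the fact (noted in \S\ref{Commute}, and confirmed by Eq.~\eqref{prime}) that $\{d_q(\eta):\eta\in\CH_\R'\}''\subseteq M_q'$, together with Eq.~\eqref{prime} which gives $A^{-1/2}\zeta\in\CH_\R'$ for $\zeta\in\mathfrak{D}(A^{-1/2})\cap\CH_\R$. The cleanest route: show by strong induction on $n$ that for $\zeta_1,\dots,\zeta_n\in\mathfrak{D}(A^{-1/2})\cap\CH_\R$ the vector $A^{-1/2}\zeta_1\otimes\cdots\otimes A^{-1/2}\zeta_n$ lies in $M_q'\Omega$, by appending $d_q(A^{-1/2}\zeta_n)$ on the right and using Eq.~\eqref{Rightmult}--\eqref{Rightmultadj} to peel off one factor (the annihilation term again producing shorter tensors handled by the induction hypothesis). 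Then, given an arbitrary $\xi_i\in\mathfrak{D}(A^{-1/2})\cap\CH_\R$, set $\zeta_i = A^{1/2}\xi_i$; but this requires $\xi_i\in\range(A^{-1/2})$ as well. A more robust alternative avoiding that issue: invoke Theorem~\ref{commutant}, which gives $J s_q(\xi)J = d_q(A^{-1/2}\xi)\in M_q'$ for $\xi\in\mathfrak{D}(A^{-1})\cap\CH_\R$; note $J\Omega=\Omega$, so $d_q(A^{-1/2}\xi)\Omega = Js_q(\xi)J\Omega = Js_q(\xi)\Omega = J\xi = A^{-1/2}\xi$ using Eq.~\eqref{modulartheory} for the action of $J$ on $\CH_\R$. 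Then induct exactly as in part $(1)$ with $d_q(A^{-1/2}\xi_i)$ in place of $s_q(\xi_i)$, writing each length-$n$ tensor $A^{-1/2}\xi_1\otimes\cdots\otimes A^{-1/2}\xi_n$ (now the relevant class of vectors) as $d_q(A^{-1/2}\xi_n)(\text{length }n-1) - (\text{lower order})$. Since $\mathfrak{D}(A^{-1})\cap\CH_\R$ is dense, a limiting argument as in the proof of Theorem~\ref{commutant} (using $\norm{d_q(\eta)} = \tfrac{2}{\sqrt{1-q}}\norm{\eta}_U$) upgrades from $\mathfrak{D}(A^{-1})\cap\CH_\R$ to $\mathfrak{D}(A^{-1/2})\cap\CH_\R$ if needed, though stating (2) for $\xi_i\in\mathfrak{D}(A^{-1/2})\cap\CH_\R$ with the understanding that the tensors are of $A^{-1/2}\xi_i$'s may already be the intended reading.

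\medskip

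\textbf{Main obstacle.} The routine part is the creation/annihilation bookkeeping; the only genuine subtlety is the domain issue in part $(2)$ — matching the statement "$\xi_i\in\mathfrak{D}(A^{-1/2})\cap\CH_\R$" with whichever operators ($d_q$ of what vector) actually produce these tensors from $\Omega$, and whether one needs $\xi_i$ in $\range(A^{-1/2})$ or only in $\mathfrak{D}(A^{-1/2})$. Routing through Theorem~\ref{commutant} (which already did the careful density/boundedness argument) rather than through a bare appeal to $d_q(\CH_\R')\subseteq M_q'$ should sidestep this, since it directly identifies $A^{-1/2}\xi = d_q(A^{-1/2}\xi)\Omega\in M_q'\Omega$.
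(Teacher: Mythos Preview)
Your argument for part $(1)$ is correct and is exactly the paper's proof: strong induction using $s_q(\xi_1)$ to peel off the leftmost factor, with the annihilation term producing shorter tensors handled by the hypothesis.

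For part $(2)$, however, there is a genuine gap. The statement asserts that $\xi_1\otimes\cdots\otimes\xi_n\in M_q'\Omega$ for $\xi_i\in\mathfrak{D}(A^{-\half})\cap\CH_\R$ --- the tensor is of the $\xi_i$'s themselves, not of the $A^{-\half}\xi_i$'s. Every operator you propose (either $d_q(A^{-\half}\xi)$ directly, or $Js_q(\xi)J$ via Theorem~\ref{commutant}) sends $\Omega$ to $A^{-\half}\xi$, not to $\xi$. Your induction therefore produces $A^{-\half}\xi_1\otimes\cdots\otimes A^{-\half}\xi_n\in M_q'\Omega$, which is a different statement; your attempted correction $\zeta_i=A^{\half}\xi_i$ fails not only because of the domain issue you flag, but because $A^{\half}\xi_i$ is generally not real (since $\CJ A^{\half}=A^{-\half}\CJ$). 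The suggested density/limiting argument also does not rescue this: $M_q'\Omega$ is not $\norm{\cdot}_q$-closed, so approximating $\xi_i$ by nicer vectors does not place the limit tensor in $M_q'\Omega$.

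The paper resolves this by a small but decisive twist: it applies $J$ \emph{before} forming the $s_q$-operator. For $\xi\in\mathfrak{D}(A^{-\half})\cap\CH_\R$ one has $J\xi=A^{-\half}\xi\in\CH_\C$; decompose $J\xi=\eta_1+i\eta_2$ with $\eta_j\in\CH_\R$ and set $s_q(J\xi):=s_q(\eta_1)+is_q(\eta_2)\in M_q$ (the extended notation of Eq.~\eqref{sq}). Then $Js_q(J\xi)J\in M_q'$ by Tomita--Takesaki, and
\[
Js_q(J\xi)J\Omega \;=\; J(J\xi)\;=\;\xi,
\]
which is the correct base case. The inductive step runs the same computation: apply $Js_q(J\xi_{n+1})J$ to $\xi_1\otimes\cdots\otimes\xi_n$; using Eq.~\eqref{modulartheory} to pass through $J$ one lands on $\xi_1\otimes\cdots\otimes\xi_{n+1}$ plus lower-order terms in $M_q'\Omega$. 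The point you are missing is that the operator in $M_q'$ hitting $\Omega$ correctly is $J\,s_q(A^{-\half}\xi)\,J$ (with $s_q$ applied to the \emph{complex} vector $A^{-\half}\xi$ via real/imaginary decomposition), not $Js_q(\xi)J=d_q(A^{-\half}\xi)$.
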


\begin{proof}
In both cases, the proof proceeds by induction.

\noindent$(1)$ Let $n=1$. Then by definition of $M_q$ it follows that $\xi=s_q(\xi)\Omega\in M_q\Omega$ for all $\xi\in\CH_\R$. Now suppose that $\xi_{1}\otimes\cdots\otimes \xi_{t}\in M_q\Omega$ for all $\xi_j\in\CH_\R$, $1\leq j\leq t$ and for all $1\leq t\leq n$. Let $\xi_{n+1}\in\CH_\R$. Then from Eq. \eqref{Leftmult} we have, 
\begin{align*}
\xi_{1}\otimes\cdots\otimes \xi_{n}\otimes\xi_{n+1}=s_q(\xi_{1})&s_q(\xi_{2}\otimes\cdots\otimes\xi_{n+1})\Omega&\\
&-\sum_{l=2}^{n+1}{q^{l-2}}
\langle \xi_{1},\xi_{l}\rangle_U \xi_{2} \otimes \cdots \otimes \xi_{l-1} \otimes \xi_{l+1} \otimes \cdots \otimes \xi_{n+1}.
\end{align*}
But the right hand side of the above expression lies in $M_q\Omega$ by the induction hypothesis. Thus, $\xi_{1}\otimes\cdots\otimes \xi_n\in M_q\Omega$ for $\xi_i\in \CH_\R$, $1\leq i\leq n$ and for all $n\in \mathbb{N}$.

\noindent$(2)$ Let $\xi\in\mathfrak{D}(A^{-\half})\cap\CH_\R$. By Eq. \eqref{modulartheory}, it follows that $J(\CH_\R)\subseteq \CH_\C$. Thus, write $J\xi=\eta_1+i\eta_2$ with $\eta_1,\eta_2\in\CH_\R$. Then $s_q(\eta_j)\in M_q$ for $j=1,2$, thus $J\xi = \left(s_q(\eta_1)+is_q(\eta_2)\right)\Omega\in M_q\Omega$. Note that $Js_q(J\xi)J\Omega=\xi$. Consequently, $\xi\in M_q^{\prime}\Omega$ by the fundamental theorem of Tomita-Takesaki theory. Like before, assume that $\xi_{1}\otimes\cdots\otimes \xi_{t}\in M_q^{\prime}\Omega$ for all $\xi_j\in\CH_\R\cap \mathfrak{D}(A^{-\frac{1}{2}})$, $1\leq j\leq t$ and for all $1\leq t\leq n$. 

Fix $\xi_{n+1}\in\mathfrak{D}(A^{-\half})\cap \CH_\R$ and let $J\xi_{n+1} = A^{-\half} \xi_{n+1} = \eta^1_{n+1} +i\eta^2_{n+1}$ with $\eta^1_{n+1},\eta^2_{n+1}\in\CH_\R$ $($see Eq. \eqref{modulartheory}$)$. Then for $\xi_i\in \mathfrak{D}(A^{-\half})\cap \CH_\R$ for all $1\leq i\leq n$, from Eq. \eqref{Leftmult}, Eq. \eqref{modulartheory}, and the fact that  $J^2=1$, it follows that
\begin{align*} 
Js_q(J\xi_{n+1})J &d_q(\xi_{1}\otimes\cdots\otimes\xi_{n})\Omega\\
&=Js_q(J\xi_{n+1})J(\xi_{1}\otimes\cdots\otimes\xi_{n})\\
&=J \left((c_q(\eta^1_{n+1}) + ic_q(\eta^2_{n+1}))  (A^{-\half}\xi_{n}\otimes\cdots\otimes A^{-\half}\xi_{1})\right) \\
& \indent\indent+ J \left((c_q(\eta^1_{n+1})^* + ic_q(\eta^2_{n+1})^*)  (A^{-\half}\xi_{n}\otimes\cdots\otimes A^{-\half}\xi_{1})\right) \\
&=J \left((\eta^1_{n+1} + i\eta^2_{n+1})\otimes  (A^{-\half}\xi_{n}\otimes\cdots\otimes A^{-\half}\xi_{1})
\right) \\
&\indent\indent+ J \left((c_q(\eta^1_{n+1})^* + ic_q(\eta^2_{n+1})^*)  (A^{-\half}\xi_{n}\otimes\cdots\otimes A^{-\half}\xi_{1})\right) \\
&=J \left(A^{-\half}\xi_{n+1 } \otimes  A^{-\half}\xi_{n}\otimes\cdots\otimes A^{-\half}\xi_{1}\right) \\
&\indent\indent+ J \left((c_q(\eta^1_{n+1})^* + ic_q(\eta^2_{n+1})^*)  (A^{-\half}\xi_{n}\otimes\cdots\otimes A^{-\half}\xi_{1})\right) \\
&=\xi_{1}\otimes\cdots\otimes \xi_{n}\otimes\xi_{n+1} 
+J \left((c_q(\eta^1_{n+1})^* + ic_q(\eta^2_{n+1})^*)  (A^{-\half}\xi_{n}\otimes\cdots\otimes A^{-\half}\xi_{1})
\right).
\end{align*}
Using the induction hypothesis, Eq. \eqref{Leftmult} and decomposing vectors in $\CH_\C$ into real and imaginary parts, it is straightforward to check that 
\begin{align*}
J \left((c_q(\eta^1_{n+1})^* + ic_q(\eta^2_{n+1})^*)(A^{-\half}\xi_{n}\otimes\cdots\otimes A^{-\half}\xi_{1})\right) \in M_q^{\prime} \Omega. 
\end{align*}
Hence, $\xi_{1}\otimes\cdots\otimes \xi_{n}\otimes\xi_{n+1}\in  M_q^{\prime}\Omega$. Now use induction to complete the proof.
\end{proof}

In the next Lemma, we make use of Lemma \ref{VectorinMq} to show how certain operators in $M_q$ act on simple tensors. 

\begin{Lemma}\label{Wickformula}
Let $\xi, \xi_i \in \CH_\R $ for $1 \leq i \leq n$ be such that $\langle 
\xi_i, \xi \rangle_U = 0$ for $ 1\leq i \leq n $. Then, 
\begin{align*}
s_q(\xi_1\otimes \cdots \otimes \xi_n)(\xi^{\otimes k})= 
\xi_1 \otimes \cdots \otimes \xi_n \otimes \xi^{\otimes k}, \text{ for all }k\geq 0. 	
\end{align*}
\end{Lemma}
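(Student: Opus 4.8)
The plan is to prove the formula $s_q(\xi_1 \otimes \cdots \otimes \xi_n)(\xi^{\otimes k}) = \xi_1 \otimes \cdots \otimes \xi_n \otimes \xi^{\otimes k}$ by induction on $n$, exactly mirroring the recursion already exploited in Lemma \ref{VectorinMq}(1). For the base case $n=1$, the operator $s_q(\xi_1) = c_q(\xi_1) + c_q(\xi_1)^*$ acts on $\xi^{\otimes k}$ by $c_q(\xi_1)(\xi^{\otimes k}) = \xi_1 \otimes \xi^{\otimes k}$, while $c_q(\xi_1)^*(\xi^{\otimes k}) = \sum_{i=1}^k q^{i-1} \langle \xi_1, \xi \rangle_U \,\xi^{\otimes(k-1)} = 0$ by the orthogonality hypothesis $\langle \xi_1, \xi\rangle_U = 0$. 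So the claim holds for $n=1$.

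For the inductive step, I would use the identity established in the proof of Lemma \ref{VectorinMq}(1): writing out $\xi_1 \otimes \cdots \otimes \xi_{n+1}$ as
\begin{align*}
\xi_1 \otimes \cdots \otimes \xi_{n+1} = s_q(\xi_1) s_q(\xi_2 \otimes \cdots \otimes \xi_{n+1})\Omega - \sum_{l=2}^{n+1} q^{l-2} \langle \xi_1, \xi_l \rangle_U\, \xi_2 \otimes \cdots \otimes \widehat{\xi_l} \otimes \cdots \otimes \xi_{n+1},
\end{align*}
which, since all these vectors lie in $M_q\Omega$, translates at the operator level (using the notation of Eq. \eqref{sq}) to
\begin{align*}
s_q(\xi_1 \otimes \cdots \otimes \xi_{n+1}) = s_q(\xi_1) s_q(\xi_2 \otimes \cdots \otimes \xi_{n+1}) - \sum_{l=2}^{n+1} q^{l-2} \langle \xi_1, \xi_l \rangle_U\, s_q(\xi_2 \otimes \cdots \otimes \widehat{\xi_l} \otimes \cdots \otimes \xi_{n+1}).
\end{align*}
Applying both sides to $\xi^{\otimes k}$ and using the induction hypothesis (valid for the $n$-fold tensors $\xi_2 \otimes \cdots \otimes \xi_{n+1}$ and each $\xi_2 \otimes \cdots \otimes \widehat{\xi_l} \otimes \cdots \otimes \xi_{n+1}$, since these still consist only of vectors orthogonal to $\xi$ in $\langle\cdot,\cdot\rangle_U$), I get
\begin{align*}
s_q(\xi_1 \otimes \cdots \otimes \xi_{n+1})(\xi^{\otimes k}) = s_q(\xi_1)\bigl(\xi_2 \otimes \cdots \otimes \xi_{n+1} \otimes \xi^{\otimes k}\bigr) - \sum_{l=2}^{n+1} q^{l-2} \langle \xi_1, \xi_l \rangle_U\, \xi_2 \otimes \cdots \otimes \widehat{\xi_l} \otimes \cdots \otimes \xi_{n+1} \otimes \xi^{\otimes k}.
\end{align*}
Now I expand $s_q(\xi_1) = c_q(\xi_1) + c_q(\xi_1)^*$ on the first term: $c_q(\xi_1)$ simply prepends $\xi_1$, giving the desired vector $\xi_1 \otimes \cdots \otimes \xi_{n+1} \otimes \xi^{\otimes k}$, while $c_q(\xi_1)^*$ applied to $\xi_2 \otimes \cdots \otimes \xi_{n+1} \otimes \xi^{\otimes k}$ produces, by Eq. \eqref{Leftmult}, exactly $\sum_{l=2}^{n+1} q^{l-2}\langle \xi_1, \xi_l\rangle_U\, \xi_2 \otimes \cdots \otimes \widehat{\xi_l} \otimes \cdots \otimes \xi_{n+1} \otimes \xi^{\otimes k}$ — the contributions from the $\xi$-slots vanish since $\langle \xi_1, \xi\rangle_U = 0$, and the surviving $q$-powers match because the $\xi_l$ occupies position $l-1$ among the $n$ tensor legs. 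This cancels precisely against the subtracted sum, leaving the claimed identity.

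There is no serious obstacle here; the proof is a bookkeeping exercise. The one point requiring a moment's care is matching the powers of $q$ in the annihilation term of $c_q(\xi_1)^*$ against the powers $q^{l-2}$ appearing in the Wick-type recursion for $s_q(\xi_1 \otimes \cdots \otimes \xi_{n+1})$, but this is already implicitly verified in the proof of Lemma \ref{VectorinMq}(1) and follows directly from the explicit formula in Eq. \eqref{Leftmult}. Alternatively, one could bypass the recursion and argue directly: since $\langle \xi_i, \xi\rangle_U = 0$ for all $i$, repeated application of Lemma \ref{SplitAdjoint} shows that $c_q(\xi_j)^*$ annihilates every vector of the form $\xi_{j+1} \otimes \cdots \otimes \xi_n \otimes \xi^{\otimes k}$, so that when $s_q(\xi_1 \otimes \cdots \otimes \xi_n) = s_q(\xi_1) \cdots s_q(\xi_n)$ modulo lower-order Wick terms is applied to $\xi^{\otimes k}$, only the pure creation part survives; I would present whichever of these is cleaner in context, but the inductive route tied to Lemma \ref{VectorinMq} is the most economical.
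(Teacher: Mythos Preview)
Your proof is correct and follows essentially the same approach as the paper's: induction on $n$ via the Wick-type recursion extracted from Lemma \ref{VectorinMq}(1), with the orthogonality hypothesis killing the annihilation contributions coming from the $\xi$-slots. The only cosmetic difference is that the paper writes out the case $k=1$ and declares $k\geq 2$ similar, whereas you carry general $k$ through directly; your version is if anything slightly cleaner.
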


\begin{proof} 
Note that by Lemma \ref{VectorinMq}, it follows that $s_q(\xi_1\otimes \cdots \otimes \xi_n)\in M_q$. The result is clearly true for $k=0$ by definition $($see Eq. \eqref{sq}$)$. We will only prove this result for $k=1$. For $k\geq 2$, the argument is similar. 

We use induction. Let $n=1$, then note that,  
\begin{align*}
s_q(\xi_1)\xi=\xi_1\otimes \xi+\langle\xi_1, \xi \rangle_U \Omega =\xi_1 \otimes \xi, \text{ by Eq. \eqref{Leftmult}}.  
\end{align*}
Now suppose that the result is true for all $1\leq m\leq n$.  Let $ \xi_{n+1} \in \CH_\R$ be such that $\langle 
\xi_{n+1}, \xi \rangle_U = 0$.  Then, from Eq. \eqref{Leftmult} and the proof of Lemma \ref{VectorinMq}, we have
\begin{align*}
s_q(\xi_{1}\otimes\cdots\otimes \xi_{n}\otimes\xi_{n+1})&=s_q(\xi_{1})s_q(\xi_{2}\otimes\cdots\otimes\xi_{n+1})\\
&\indent -\sum_{l= 2}^{n+1}{q^{l-2}}
\langle \xi_{1},\xi_{l}\rangle_U s_q(\xi_{2} \otimes \cdots \otimes \xi_{l-1} \otimes \xi_{l+1} \otimes \cdots \otimes \xi_{n+1}).
\end{align*}
Consequently, by using the induction hypothesis, one has 
\begin{align*}
&s_q(\xi_{1}\otimes\cdots\otimes \xi_{n}\otimes\xi_{n+1}) \xi \\
=& 	s_q(\xi_{1})s_q(\xi_{2}\otimes\cdots\otimes\xi_{n+1}) \xi\\
&\indent-\sum_{l=2}^{n+1}{q^{l-2}}
\langle \xi_{1},\xi_{l}\rangle_U s_q(\xi_{2} \otimes \cdots \otimes \xi_{l-1} \otimes \xi_{l+1} \otimes \cdots \otimes \xi_{n+1})\xi\\
=& 	s_q(\xi_{1})(\xi_{2}\otimes\cdots\otimes\xi_{n+1}\otimes \xi)\\
&\indent-\sum_{l=2}^{n+1}{q^{l-2}}
\langle \xi_{1},\xi_{l}\rangle_U (\xi_{2} \otimes \cdots \otimes \xi_{l-1} \otimes \xi_{l+1} \otimes \cdots \otimes \xi_{n+1} \otimes \xi)\\
=& \xi_{1}\otimes\cdots\otimes \xi_{n}\otimes\xi_{n+1}\otimes \xi, \text{ by Eq. \eqref{Leftmult}}.
\end{align*}
This completes the proof. 
\end{proof}

Since $t\mapsto U_t$, $t\in \R$, is a strongly continuous orthogonal representation of $\R$ on the real Hilbert space $\CH_\R$, so there is a unique decomposition $($c.f. \cite{Shlyakhtenko}$)$,
\begin{align}\label{Representation}
( \CH_\R, U_t)  =  
\left(\bigoplus_{j = 1}^{N_1} (\R, \text{id} )\right) \oplus \left( \bigoplus_{k=1}^{N_2}(\CH_\R(k), U_t(k) ) \right) \oplus (\widetilde{\CH}_\R, \widetilde{U}_t),
\end{align}
where $0\leq N_1,N_2\leq \aleph_0$, 
\begin{align*}
\CH_\R(k) = \R^2, \quad U_t(k)  = \left( \begin{matrix} \cos( t\log \lambda_k)& - \sin( t\log\lambda_k) \\
\sin( t\log\lambda_k)& \cos( t\log\lambda_k)  \end{matrix} \right), \text{ }\lambda_k > 1,
\end{align*}
and $(\widetilde{\CH}_\R, \widetilde{U}_t)$ corresponds to the weakly mixing component of the orthogonal representation; thus $\widetilde{\CH}_\R$ is either $0$ or infinite dimensional.

If $N_1\neq 0$, let $e_j=0\oplus\cdots\oplus 0\oplus 1\oplus 0\oplus\cdots\oplus 0\in \bigoplus_{j=1}^{N_1}\R$, where $1$ appears at the $j$-th place for $1\leq j\leq N_1$. Similarly, if $N_2\neq 0$, let $f_k^1=0\oplus\cdots\oplus 0\oplus\left( \begin{matrix}1\\0 \end{matrix}\right)\oplus 0\oplus\cdots \oplus 0\in\bigoplus_{k=1}^{N_2}\CH_\R(k)$ and $f_k^2 =0\oplus\cdots\oplus 0\oplus\left( \begin{matrix}0\\1\end{matrix}\right)\oplus 0\oplus\cdots \oplus 0\in \bigoplus_{k=1}^{N_2}\CH_\R(k)$ be vectors with non zero entries in the $k$-th position for $1\leq k\leq N_2$. Denote 
\begin{align*}
e^1_k= \frac{\sqrt{\lambda_k +1}}{2}(f_k^1+if_k^2) \text{ and }e^2_k=\frac{\sqrt{{\lambda}^{-1}_k+1}}{2}(f_k^1-if_k^2),
\end{align*}
thus $e_k^1,e_k^2 \in\CH_\R(k)+i\CH_\R(k)$ are orthonormal basis of $(\CH_\R(k)+i\CH_\R(k),\langle\cdot,\cdot\rangle_{U})$ for $1\leq k\leq N_2$. Fix $1\leq k\leq N_2$. The analytic generator $A(k)$ of $(U_t(k))$ is given by 
\begin{align*}
A(k)=\frac{1}{2}\left( \begin{matrix} \lambda_k + \frac{1}{\lambda_k} & i(\lambda_k - \frac{1}{\lambda_k})\\
-i(\lambda_k -\frac{1}{ \lambda_k}) &\lambda_k + \frac{1}{\lambda_k}
\end{matrix}\right).
\end{align*}
Moreover, 
\begin{align*}
A(k) e_k^1 = \frac{1}{\lambda_k} e_k^1  \text{ and }  A(k) e_k^2 = \lambda_k e_k^2.
\end{align*}
Write $\CS=\{e_j: 1\leq j\leq N_1\} \cup \{e_k^1,e_k^2: 1\leq k\leq N_2\}$ if $N_1\neq 0$ or $N_2\neq 0$, else set $S=\{0\}$. If $S\neq \{0\}$, then $S$ is an orthogonal set in $(\CH_\C, \langle \cdot, \cdot\rangle_U)$ and the space of eigen vectors of the analytic generator $A$ of $(U_t)$ is contained in $\text{span }\CS$. In the event $\CS\neq \{0\}$, 
rename the elements of the set $\CS$ as $\zeta_1,\zeta_2,\cdots$, i.e., $\CS=\{\zeta_i:1\leq i\leq N_1+2N_2\}$, whence $A\zeta_l=\beta_l\zeta_l$ with $\beta_l\in\mathcal{E}_A$ for all $l$, where $\mathcal{E}_A=\{1\}\cup \{\lambda_k:1\leq k\leq N_2\}\cup \{\frac{1}{\lambda_k}:1\leq k\leq N_2\}$. It is to be understood that when $N_1=\infty$ $($resp. $N_2=\infty)$, the constraints $j\leq N_1$ and $i\leq N_1+2N_2$ $($resp. $k\leq N_2$ and $i\leq N_1+2N_2)$ $($in defining $\CS$ and $\mathcal{E}_A)$ is replaced by $j<N_1$ and $i<N_1+2N_2$ $($resp.  $k<N_2$ and $i<N_1+2N_2)$.

The following result must be known to experts but we lag a reference, so we prove it for the sake of convenience.

\begin{Proposition}\label{Eigenvalueinamplification}
Let $(\rho,\CH)$ be a strongly continuous unitary representation of a separable locally compact abelian group $G$ on a Hilbert space $\CH$. For $n\geq 1$ and $q\in (-1,1)$, let $\rho^{\otimes_q n}$ be the $n$-fold amplification of $\rho$ on $\CH^{\otimes_q n}$ defined by 
\begin{align*}
\rho^{\otimes_q n}(g)(\xi_1\otimes\cdots\otimes \xi_n)=\rho(g)\xi_1\otimes\cdots\otimes\rho(g)\xi_n, \text{ }g\in G, \xi_i\in \CH \text{ for }1\leq i\leq n.
\end{align*}
Then $(\rho^{\otimes_q n},\CH^{\otimes_q n})$ is a strongly continuous unitary representation of $G$. Let 
$\eta\in \CH^{\otimes_q n}$ be an eigen vector of $\rho^{\otimes_q n}$ with associated character $\chi\in \widehat{G}$. Let 
\begin{align*}
\mathfrak{e}_{\chi}=\Big\{\xi_1\otimes\cdots\otimes\xi_n: \xi_i\in\CH, \exists\text{ } &\chi_i\in\widehat{G} \text{ such that }\\
&\rho(\cdot)\xi_i=\chi_{i}(\cdot)\xi_i, 1\leq i\leq n, \prod_{i=1}^{n}\chi_i=\chi\Big\}.
\end{align*}
Then, $\eta\in \overline{\text{span }\mathfrak{e}_{\chi}}$.
\end{Proposition}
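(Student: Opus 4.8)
The natural approach is to diagonalize the representation $\rho$ simultaneously via its spectral decomposition and then argue that an eigenvector of the amplified representation must live in the span of product eigenvectors with matching characters. First I would recall that since $G$ is a separable locally compact abelian group and $(\rho,\CH)$ is a strongly continuous unitary representation, by SNAG (Stone--Naimark--Ambrose--Godement) theory there is a projection-valued measure $E$ on the Pontryagin dual $\widehat G$ (a standard Borel space) such that $\rho(g)=\int_{\widehat G}\chi(g)\,dE(\chi)$. Strong continuity and the group law give immediately that $\rho^{\otimes_q n}$ is a strongly continuous unitary representation on $\CH^{\otimes_q n}$: unitarity follows because $\rho(g)^{\otimes n}$ is unitary on $(\CH^{\otimes n},\norm{\cdot}_0)$ and, by Lemma \ref{Equivalent} (equivalently, because $P_q$ commutes with $\rho(g)^{\otimes n}$ since $\rho(g)^{\otimes n}$ commutes with each $U_\pi$), the operator $\rho(g)^{\otimes n}$ descends to a unitary on $\CH^{\otimes_q n}$; strong continuity passes from $\CH$ to finite tensor powers by a routine $\e/3$ argument using $\norm{\cdot}_0$-norm equivalence.

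The main step is the eigenvector statement. I would first work on the \emph{unsymmetrized} space $(\CH^{\otimes n},\norm{\cdot}_0)=\CH^{\otimes_0 n}$, where $\rho^{\otimes_0 n}(g)=\rho(g)^{\otimes n}$ and the projection-valued measure of $\rho^{\otimes_0 n}$ is the product PVM $E^{\otimes n}$ on $\widehat G^{\,n}$, pushed forward under the (continuous, proper on compacts) multiplication map $m:\widehat G^{\,n}\to\widehat G$, $(\chi_1,\dots,\chi_n)\mapsto\prod\chi_i$. Concretely, the spectral subspace of $\rho^{\otimes_0 n}$ for the character $\chi$ is $E^{\otimes n}(m^{-1}(\{\chi\}))(\CH^{\otimes_0 n})$, and an eigenvector $\eta$ for $\chi$ lies in this subspace. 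One then checks that $m^{-1}(\{\chi\})=\{(\chi_1,\dots,\chi_n):\prod\chi_i=\chi\}$ and that $E^{\otimes n}$ restricted to this set is supported precisely on tuples of atoms of $E$ (because an eigenvector forces a point mass); decomposing over the countably many atoms $\chi_i$ of $E$ that can occur, $E^{\otimes n}(m^{-1}(\{\chi\}))(\CH^{\otimes_0 n})$ is the closed span of $\CH_{\chi_1}\otimes\cdots\otimes\CH_{\chi_n}$ over tuples with $\prod\chi_i=\chi$, where $\CH_{\chi_i}=\Ker(\rho(\cdot)-\chi_i(\cdot))$ is the $\chi_i$-eigenspace of $\rho$. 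This is exactly $\overline{\Span\,\mathfrak e_\chi}$ computed in the $\norm{\cdot}_0$-topology. Finally, since $\eta$ is also an eigenvector of $\rho^{\otimes_q n}$ and $P_q$ intertwines the two representations and is a bounded invertible positive operator commuting with $\rho(g)^{\otimes n}$, the $\chi$-eigenspaces of $\rho^{\otimes_0 n}$ and $\rho^{\otimes_q n}$ coincide as \emph{sets}; and by Lemma \ref{Equivalent} the identity map is a topological isomorphism $(\CH^{\otimes n},\norm{\cdot}_q)\to(\CH^{\otimes n},\norm{\cdot}_0)$, so $\overline{\Span\,\mathfrak e_\chi}^{\,\norm{\cdot}_q}=\overline{\Span\,\mathfrak e_\chi}^{\,\norm{\cdot}_0}$. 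Hence $\eta\in\overline{\Span\,\mathfrak e_\chi}$.

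\textbf{Main obstacle.} The delicate point is the measure-theoretic bookkeeping on $\widehat G^{\,n}$: showing that the spectral subspace of the product representation over the level set $m^{-1}(\{\chi\})$ decomposes as a direct sum over tuples of atoms of $E$, rather than merely being contained in something larger. This requires care because $m^{-1}(\{\chi\})$ is typically a set of measure zero for most product measures, so one must argue that the relevant spectral projection is nonzero only when each coordinate measure has an atom, and then reassemble. One clean way is to use that for any vector $\eta$ in the $\chi$-eigenspace, the scalar spectral measure $\mu_\eta$ of $\rho^{\otimes_0 n}$ is the point mass at $\chi$, pull this back via $m$ to a measure on $\widehat G^{\,n}$ concentrated on $m^{-1}(\{\chi\})$, and observe that its marginals are then atomic and supported on atoms of $E$; a Fubini/disintegration argument over the (at most countable) possible atoms finishes it. An alternative, perhaps cleaner for the write-up, is induction on $n$: split $\CH^{\otimes_0 n}=\CH\otimes\CH^{\otimes_0(n-1)}$, decompose the first factor via its (possibly continuous) spectral measure, note that an eigenvector of the tensor-product representation forces the first-coordinate marginal to be a sum of atoms, and apply the inductive hypothesis to the orthogonal slices. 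I would adopt the inductive route and keep the measure-theoretic details to a minimum, citing the spectral theorem in direct-integral form.
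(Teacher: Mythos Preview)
Your proposal is correct and follows the same route as the paper: both reduce to the $q=0$ case via the similarity of Lemma~\ref{Equivalent} (the paper writes $S^{-1}\rho^{\otimes_0 n}(\cdot)S=\rho^{\otimes_q n}(\cdot)$ and notes the spectral properties of the two representations are therefore identical), then appeal to spectral theory of the ordinary tensor-power representation. The paper is much terser at the second step---it simply invokes the fact that $\lambda\in S^1$ is an eigenvalue of $U_1\otimes\cdots\otimes U_n$ iff $\lambda=\prod_j\lambda_j$ with $U_j\xi_j=\lambda_j\xi_j$ and says ``the rest is obvious''---whereas your SNAG/Fubini analysis on $\widehat G^{\,n}$ makes explicit precisely the eigenspace decomposition the paper leaves to the reader.
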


\begin{proof}
First of all, note that Eq. \eqref{qFock} forces that $\rho^{\otimes_q n}$ is a strongly continuous unitary representation of $G$. Note that by Lemma \ref{Equivalent}, the operator $S: (\CH^{\otimes_q n},\norm{\cdot}_q)\rightarrow (\CH^{\otimes_0 n},\norm{\cdot}_0)$ defined by $S(\xi_1\otimes\cdots\otimes \xi_n)=\xi_1\otimes\cdots\otimes \xi_n$, for all $\xi_i\in \CH$, $1\leq i\leq n$, is bounded and invertible. Moreover, $S^{-1}\rho^{\otimes_0 n}(\cdot)S=\rho^{\otimes_q n}(\cdot)$. Consequently, the spectral properties of $\rho^{\otimes_q n}$ and $\rho^{\otimes_0 n}$ are identical. Also note that $\rho^{\otimes_0 n}$ is the usual tensor product representation on the usual tensor product of Hilbert spaces. 

The result now follows from the following fact. If $U_j\in \textbf{B}(\mathcal{K})$ is unitary for $1\leq j\leq n$, then $\lambda\in S^1$ is an eigen value of $U_1\otimes \cdots \otimes U_n$ if and only if there exist $\lambda_j\in S^1$ and unit vectors $\xi_j\in\mathcal{K}$ such that $U_j\xi_j=\lambda_j\xi_j$ for $1\leq j\leq n$ and $\lambda =\prod_{j=1}^n \lambda_j$. The rest is obvious, we omit the details.
\end{proof}

\begin{Theorem}\label{CentraliserDescribe}
Let 
\begin{align*}
\CW_0 =\begin{cases}& \{\zeta_{i_1} \otimes \cdots \otimes \zeta_{i_n}:\zeta_{i_j} \in \CS, 1\leq i_j\leq  N_1+2N_2, \prod_{j=1}^n\beta_{i_j}= 1, n\in \N \},\\
&\indent\indent\indent\indent\indent\indent\indent\indent\indent \text{ if }\max(N_1,N_2)<\infty;\\
&\{\zeta_{i_1} \otimes \cdots \otimes \zeta_{i_n}:\zeta_{i_j} \in \CS, 1\leq i_j <N_1+2N_2, \prod_{j=1}^n\beta_{i_j}= 1, n\in \N \},\\
&\indent\indent\indent\indent\indent\indent\indent\indent\indent \text{ if }\max(N_1,N_2)=\infty.\\
\end{cases}
\end{align*} 
Let $\CW=\C\Omega\oplus \overline{\text{span }\CW_0}^{\norm{\cdot}_q}$. Then, $M_q^\varphi\Omega=\CW\cap M_q\Omega$.
\end{Theorem}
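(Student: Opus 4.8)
The plan is to identify the GNS image $M_q^\varphi\Omega$ of the centralizer as the intersection of two subspaces: the fixed-point space of the modular flow acting on $\CF_q(\CH)$, and $M_q\Omega$. The modular flow is implemented spatially by $\sigma^\varphi_{-t}=\Ad(\CF(U_t))$ with $\CF(U_t)=\mathrm{id}\oplus\bigoplus_{n\ge1}U_t^{\otimes_q n}$, so for $x\in M_q$ with $x\Omega=\xi$ we have $x\in M_q^\varphi$ if and only if $\sigma^\varphi_{-t}(x)\Omega=\xi$ for all $t$, i.e. if and only if $\CF(U_t)\xi=\xi$ for all $t$ (using that $\CF(U_t)$ commutes with $J$ and fixes $\Omega$, and that $\sigma^\varphi_{-t}(x)\Omega=\CF(U_t)x\CF(U_t)^*\Omega=\CF(U_t)x\Omega$). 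Hence
\[
M_q^\varphi\Omega=\{\xi\in M_q\Omega:\ \CF(U_t)\xi=\xi\ \ \forall t\in\R\}=\big(\text{fixed space of }\CF(U_t)\big)\cap M_q\Omega.
\]
So the whole statement reduces to showing that the fixed-point subspace of the one-parameter unitary group $\CF(U_t)$ on $\CF_q(\CH)$ equals $\CW=\C\Omega\oplus\overline{\mathrm{span}\,\CW_0}^{\norm{\cdot}_q}$.

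Next I would compute this fixed space level by level in the grading $\CF_q(\CH)=\bigoplus_n\CH^{\otimes_q n}$. On level $0$ it is $\C\Omega$. On level $n$, $\CF(U_t)$ restricts to $U_t^{\otimes_q n}$, so I need the subspace of $\CH^{\otimes_q n}$ fixed by $U_t^{\otimes_q n}$ for all $t$, i.e. the eigenspace of the one-parameter group $(U_t^{\otimes_q n})$ for the trivial character. By Proposition~\ref{Eigenvalueinamplification} (applied with $G=\R$, $\rho=U_t$; note $U_t=A^{it}$, so characters of $\R$ appearing in $\rho$ correspond exactly to points of the point spectrum of $A$), any such fixed vector lies in $\overline{\mathrm{span}}$ of simple tensors $\xi_1\otimes\cdots\otimes\xi_n$ where each $\xi_i$ is an eigenvector of $A$, say $A\xi_i=\beta_i\xi_i$, with $\prod_i\beta_i=1$. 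By the discussion preceding Theorem~\ref{CentraliserDescribe}, the eigenvectors of $A$ all lie in $\mathrm{span}\,\CS$, and $\CS=\{\zeta_l\}$ is an orthonormal basis (w.r.t.\ $\langle\cdot,\cdot\rangle_U$) of the span of eigenvectors with $A\zeta_l=\beta_l\zeta_l$; moreover on each level $\langle\cdot,\cdot\rangle_q$ is equivalent to $\langle\cdot,\cdot\rangle_0$ (Lemma~\ref{Equivalent}), so tensors built from distinct members of $\CS$ remain linearly independent and the $\beta$-weight decomposition is an honest topological direct sum decomposition of the closed span. Therefore the level-$n$ fixed space is exactly $\overline{\mathrm{span}}\{\zeta_{i_1}\otimes\cdots\otimes\zeta_{i_n}:\zeta_{i_j}\in\CS,\ \prod_j\beta_{i_j}=1\}$ (with the usual index caveat when $N_1$ or $N_2$ is infinite), and summing over $n$ gives precisely $\CW$. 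Conversely every element of $\CW_0$ is fixed by $\CF(U_t)$ since $U_t\zeta_{i_j}=\beta_{i_j}^{it}\zeta_{i_j}$ and $\prod_j\beta_{i_j}^{it}=1$, and fixedness passes to the closed span; hence $\CW$ is contained in the fixed space. This proves the fixed space equals $\CW$, and intersecting with $M_q\Omega$ finishes the proof.

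The main obstacle I anticipate is the bookkeeping in the middle step: making rigorous the claim that the $U_t^{\otimes_q n}$-fixed vectors in $\CH^{\otimes_q n}$ are captured by the countable combinatorial set $\CW_0$ rather than by some larger closure involving non-eigenvectors of $A$. This needs (i) that $A$ restricted to the almost periodic part has pure point spectrum with eigenvectors spanned by $\CS$ while the weakly mixing part $\widetilde\CH_\R$ contributes no eigenvectors — so a simple tensor with a factor having nontrivial continuous spectral part cannot be part of a trivial-character eigenvector, which is where Proposition~\ref{Eigenvalueinamplification}'s characterization ``$\lambda=\prod\lambda_j$ with each $\xi_j$ an eigenvector'' is essential; and (ii) a careful argument that the closed span of the relevant simple tensors is genuinely the fixed space and not merely contained in it — here one decomposes $\CH^{\otimes_0 n}$ along the joint spectral projections of the commuting self-adjoint generators, checks the trivial-weight component is the closure of $\mathrm{span}\,\CW_0$ in $\norm{\cdot}_0$, and then transports this back to $\norm{\cdot}_q$ via the bounded invertible intertwiner $S$ of Lemma~\ref{Equivalent}. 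The rest (the reduction via the spatial implementation of $\sigma^\varphi$, and the intersection with $M_q\Omega$) is routine.
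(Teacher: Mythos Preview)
Your proposal is correct and follows essentially the same approach as the paper: both reduce via the spatial implementation $\sigma^\varphi_{-t}=\Ad(\CF(U_t))$ to computing the fixed space of $\CF(U_t)$ level by level, and both invoke Proposition~\ref{Eigenvalueinamplification} (together with Proposition~\ref{Eigenvector}, which is implicit in your appeal to ``the discussion preceding Theorem~\ref{CentraliserDescribe}'') to identify the level-$n$ fixed vectors with the closed span of the relevant simple tensors from $\CS$. Your framing---first proving the fixed space equals $\CW$, then intersecting with $M_q\Omega$---is slightly cleaner than the paper's, which checks the two inclusions separately, but the substance is identical.
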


\begin{proof}
Decomposing vectors in $\CS$ into real and imaginary parts and using Lemma \ref{VectorinMq}, it follows that $\CW_0\subseteq M_q\Omega$. Fix $n\in \N$ and let $1\leq i_1,\cdots, i_n\leq N_1+2N_2$ or $1\leq i_1,\cdots, i_n< N_1+2N_2$ $($as the case may be$)$, be such that $\beta_{i_1}\cdots \beta_{i_n}=1$. Pick $\zeta_{i_j}\in \CS$ for $1\leq j\leq n$. Consider $x=s_q(\zeta_{i_1} \otimes \cdots\otimes\zeta_{i_n} )\in M_q$. As $\sigma_{-t}^{\varphi}=\text{Ad}(\CF(U_t))$ $($see Eq. \eqref{modulartheory}, \eqref{modularaut}$)$, so 
\begin{align*}
\sigma^\varphi_{-t} (x)\Omega 
&= \CF(U_t) x \CF(U_t)^* \Omega 
= \CF(U_t) x \Omega 
=\CF(U_t)(\zeta_{i_1}\otimes\cdots\otimes\zeta_{i_n})\\
&= U_t\zeta_{i_1}\otimes \cdots \otimes U_t\zeta_{i_n}\\
&= (\beta_{i_1}\cdots \beta_{i_n})^{it}(\zeta_{i_1}\otimes\cdots\otimes\zeta_{i_n}), \text{ (since } U_t = A^{it})\\
&= s_q(\zeta_{i_1}\otimes\cdots\otimes\zeta_{i_n} )\Omega\\
&=x\Omega, \text{ for all }t\in\R.
\end{align*}
Consequently, $x=s_q(\zeta_{i_1}\otimes\cdots\otimes\zeta_{i_n}) \in M^\varphi_q$. Therefore, conclude that $\CW \cap M_q\Omega \subseteq M_q^\varphi \Omega$.

For the reverse inclusion, let $y \in M^\varphi_q$ and write $y\Omega=\sum_{n = 0}^\infty\eta_n$, where ${\eta}_n\in \CH^{\otimes_q n}$ for all $n\geq 0$ and the series converges in $\norm{\cdot}_q$. It is enough to show that $\eta_n \in \CW$ for all $n\geq 0$. Again, note that 
\begin{align*}
\sum_{n = 0}^\infty\eta_n&=\sigma^\varphi_{-t} (y)\Omega=\CF(U_t) y\CF(U_t)^* \Omega\\
& = \CF(U_t) y\Omega \\
&=\CF(U_t) \sum_{n = 0}^\infty \eta_n
=\sum_{n = 0}^\infty\CF(U_t)  \eta_n, \text{ for all }t\in\R.
\end{align*}
Since $\CF(U_t) {\CH}^{\otimes_q n }={\CH}^{\otimes_q n }$ for all $n\geq 0$ and for all $t\in \R$, so we have $\CF(U_t)\eta_n =\eta_n$ for all $n$ and for all $t\in\R$. Fix $n\geq 1$ such that $\eta_n\neq 0$. Therefore, by Prop. \ref{Eigenvector} and Prop. \ref{Eigenvalueinamplification}, it follows that there exist $\zeta_{k,l}^{(n)}\in \CS$ and $\beta_{k,l}^{(n)}\in \mathcal{E}_A$ with $A\zeta_{k,l}^{(n)}=\beta_{k,l}^{(n)}\zeta_{k,l}^{(n)}$ for $1\leq k\leq n$ and scalars $c_{n,l}$, $l\in\N$, such that $\eta_n =\sum_{l}c_{n,l}(\zeta_{1,l}^{(n)}\otimes\cdots\otimes\zeta_{n,l}^{(n)})$ and $\prod_{k=1}^{n}\beta_{k,l}^{(n)}=1$ for all $l$; the series above converges in $\norm{\cdot}_{q}$. Consequently, $\eta_n\in \mathcal{W}$ for all $n\geq 0$ and the proof is complete.
\end{proof}

\begin{Remark}\label{CentralizerRemark}
\noindent$(i)$ Suppose $N_1=1,N_2=0$ and $\widetilde{H}_{\R}\neq 0$ in Eq. \eqref{Representation}. Then by Thm. \ref{CentraliserDescribe} we have $M_q^{\varphi}=M_{\xi}$, where $0\neq\xi\in \CH_\R$ is such that 
$U_t\xi=\xi$ for all $t\in\R$. From Thm. \ref{masa} $($proved later$)$, $M_\xi$ is a masa in $M_q$, so $(M_q^{\varphi})^{\prime}\cap M_q=M_q^{\varphi}$. Thus, the conclusion of Thm. 3.2 of \cite{Hiai} is not true in general.

\noindent$(ii)$ If $\mathcal{E}_A=\{1\}$, then $M_q^{\varphi}$ is isomorphic to the $q$-Gaussian von Neumann algebra of Bo$\overset{.}{\text{z}}$ejko and Speicher \cite{BS,BKS}. Thus, in this case, if $1$ is eigen value of multiplicity more than or equal to $2$, then $M_q^{\varphi}$ is a factor by \cite{ER} $($compare Cor. \ref{Trivialrelcommutant}$)$.
\end{Remark}


\section{Generator Algebras $M_\xi$}\label{GenMasas}
  
In this section, we investigate the von Neumann subalgebras $M_{\xi}$ for $\xi\in \CH_\R$, and record some of their properties. This is a preparatory section and the aforesaid subalgebras play major role in deciding the factoriality of $M_q$. 

In the case when $q =0$, $t\mapsto U_t$ is the identity representation of $\R$ and $dim(\CH_\R) \geq 2$, it is well known that $M_0=\Gamma_0(\CH_\R, id_t) \cong L\mathbb{F}_{dim(\CH_\R)}$  (see \cite{DVN}). In that case, for all $0\neq\xi \in \CH_\R$, the algebra $M_{\xi}$ is maximal injective $($see \cite{Po83}$)$, strongly mixing masa, for which the orthocomplement of the associated Jones' projection regarded as a $M_{\xi}$-bimodule is infinite direct sum of coarse bimodules $($see \cite{Jan-Fang-Kunal}, \cite{DSS06}$)$. Moreover, if $\xi_{1},\xi_{2}\in \CH_\R$ be non zero elements such that $\langle \xi_{1},\xi_{2}\rangle_{\CH_\C}=0$, then $M_{\xi_1}$ and $M_{\xi_2}$ are free and outer conjugate \cite{DVN}.

Note that if $0\neq\xi\in \CH_\R$ and $U_t\xi=\xi$ for all $t\in \R$, then $s_q(\xi) \in M_q^{\varphi}$ $($from Eq. \eqref{modularaut}$)$. So $J\xi= J s_q(\xi)\Omega = s_q(\xi)^{*}\Omega = s_q(\xi)\Omega = \xi$. 

By Eq. $(1.2)$ of \cite{Hiai}, for $\xi\in \CH_\R$ with $\norm{\xi}_{U}=1$, the moments of the operator $s_q(\xi)$ with respect to the $q$-quasi free state $\varphi(\cdot) = \lan \Omega, \cdot \Omega \ran_q$ is given by 
\begin{equation*}
\varphi(s_q(\xi)^n)=
   \begin{cases}
     0, & \text{if } n \text{ is odd}, \\
     \sum_{\CV = \{ \pi(r), \kappa(r) \}_{ 1 \leq r \leq \frac{n}{2} } }  q^{c(\CV)}, & \text{if }n\text{ is even},
  \end{cases}
\end{equation*}
where the summation is taken over all pair partitions $\CV = \{ \pi(r), \kappa(r) \}_{ 1 \leq r \leq \frac{n}{2} }$ of $\{1, 2, \cdots, n\}$ with $\pi(r)<\kappa(r) $ and $c(\CV)$ is the number of crossings of $\CV$, i.e, 
\begin{align*}
c(\CV) = \# \{ (r, s): \pi(r)< \pi(s) < \kappa(r) < \kappa(s) \}.
\end{align*}
So, it  follows that for $\xi\in\CH_\R$ with ${\norm{\xi}}_U = 1 $, the distribution of the single $q$-Gaussian $s_q(\xi)$ does not depend on the group $(U_t )$. In the tracial case, and thus in all cases, this distribution obeys the semicircular law $\nu_q$ which is absolutely continuous with respect to the uniform measure  supported on the interval $[-\frac{2}{\sqrt{ 1-q}}, \frac{2}{\sqrt{ 1-q}}]$. The associated orthogonal polynomials are $q$-Hermite polynomials $H_{n}^{q}$, $ n\geq 0$. For the density function of $\nu_q$ and the recurrence relations defining the $q$-Hermite polynomials, we refer the reader to Defn. $1.9$ and Thm. $1.10$ of  \cite{BKS} $($also see \cite{Nou-Araki}, \cite{DVN}$)$. Hence, $M_{\xi}\cong L^\infty([-\frac{2}{\sqrt{ 1-q}}, \frac{2}{\sqrt{ 1-q}}], \nu_q )$, thus $M_{\xi}$ is diffuse and $\{H_n^q(s_q(\xi))\Omega:n\geq 0\}$, is a total orthogonal set of vectors in $\overline{M_{\xi}\Omega}^{\norm{\cdot}_q}$. Write $\mathcal{E}_{\xi}=\{\xi^{\otimes n}:n\geq 0\}$.

\begin{Lemma}\label{Intersection}
The following hold. 
\begin{enumerate}
\item Let $\xi\in \CH_\R$ be a unit vector such that $U_t\xi=\xi$ for all $t\in \R$. Then, $\mathcal{E}_{\xi}\subseteq M_q\Omega \cap M_q^{\prime}\Omega$.
\item Let $\xi\in \CH_\R$ be a unit vector.  Then, $\overline{M_\xi\Omega}^{\norm{\cdot}_q}= \overline{\text{ span }\mathcal{E}_\xi}^{\norm{\cdot}_q}$. 
\end{enumerate}
\end{Lemma}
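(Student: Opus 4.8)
The statement has two parts, and the plan is to dispatch $(1)$ as an immediate consequence of Lemma \ref{VectorinMq}, and to prove $(2)$ by an elementary unitriangular change-of-basis argument combined with the description of $\overline{M_\xi\Omega}^{\norm{\cdot}_q}$ recorded just above.

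For $(1)$: since $U_t\xi=\xi$ for all $t$ and $U_t=A^{it}$, the vector $\xi$ is an eigenvector of $A$ with eigenvalue $1$; in particular $\xi\in\mathfrak{D}(A^{-1/2})\cap\CH_\R$ (indeed $A^{-1/2}\xi=\xi$). Applying Lemma \ref{VectorinMq}$(1)$ with all $n$ factors equal to $\xi$ gives $\xi^{\otimes n}\in M_q\Omega$, while applying Lemma \ref{VectorinMq}$(2)$---legitimate precisely because $\xi\in\mathfrak{D}(A^{-1/2})\cap\CH_\R$---again with all $n$ factors equal to $\xi$ gives $\xi^{\otimes n}\in M_q^{\prime}\Omega$. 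Since $\xi^{\otimes 0}=\Omega$ lies in both, we obtain $\mathcal{E}_{\xi}\subseteq M_q\Omega\cap M_q^{\prime}\Omega$; I foresee no obstacle here.

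For $(2)$: using Eq. \eqref{Leftmult} I would first record the recursion $s_q(\xi)\Omega=\xi$ and $s_q(\xi)(\xi^{\otimes n})=\xi^{\otimes(n+1)}+[n]_q\norm{\xi}_U^{2}\,\xi^{\otimes(n-1)}$ for $n\geq 1$, the creation part contributing the first summand and $c_q(\xi)^{*}$, which collapses each of the $n$ identical slots, contributing the second. Iterating, an induction on $n$ shows that $s_q(\xi)^{n}\Omega$ equals $\xi^{\otimes n}$ plus a linear combination of the $\xi^{\otimes k}$ with $k<n$; hence the matrix relating $(s_q(\xi)^{n}\Omega)_{n\geq 0}$ to $(\xi^{\otimes n})_{n\geq 0}$ is unitriangular, so it is invertible at every finite level and $\Span\{s_q(\xi)^{n}\Omega:n\geq 0\}=\Span\mathcal{E}_{\xi}$. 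Passing to $\norm{\cdot}_q$-closures yields $\overline{\Span\{s_q(\xi)^{n}\Omega:n\geq 0\}}^{\norm{\cdot}_q}=\overline{\Span\mathcal{E}_{\xi}}^{\norm{\cdot}_q}$. Finally, since $H_n^q$ has degree exactly $n$, the $q$-Hermite polynomials and the monomials span the same space at each finite level, so the fact recorded just above that $\{H_n^q(s_q(\xi))\Omega:n\geq 0\}$ is total in $\overline{M_\xi\Omega}^{\norm{\cdot}_q}$ gives $\overline{M_\xi\Omega}^{\norm{\cdot}_q}=\overline{\Span\{s_q(\xi)^{n}\Omega:n\geq 0\}}^{\norm{\cdot}_q}$; comparing the two closures finishes the argument.

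The only point requiring a little care is the bookkeeping behind the unitriangularity claim---confirming that the top term of $s_q(\xi)^{n}\Omega$ is exactly $\xi^{\otimes n}$ with coefficient $1$ and that all corrections remain inside $\Span\mathcal{E}_{\xi}$---but this is a routine induction once the recursion above is in place, so there is no genuine obstacle.
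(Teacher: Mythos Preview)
Your proof is correct. Part $(1)$ is identical to the paper's argument. For part $(2)$ your route differs slightly from the paper's: the paper invokes the Wick product formula from \cite{BKS} to obtain the exact identity $\xi^{\otimes n}=H_n^q(s_q(\xi))\Omega$ (giving one inclusion immediately) and then appeals to Stone--Weierstrass/Kaplansky or the identification $\overline{M_\xi\Omega}\cong L^2(\nu_q)$ for the reverse inclusion; you instead derive the recursion $s_q(\xi)(\xi^{\otimes n})=\xi^{\otimes(n+1)}+[n]_q\norm{\xi}_U^2\,\xi^{\otimes(n-1)}$ directly from Eq.~\eqref{Leftmult}, use unitriangularity to equate finite spans, and then feed in the totality of $\{H_n^q(s_q(\xi))\Omega\}$ already recorded before the Lemma. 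Your approach is more self-contained and avoids the external citation; in fact your recursion (for $\norm{\xi}_U=1$) \emph{is} the $q$-Hermite three-term recurrence, so it actually reproves the identity $\xi^{\otimes n}=H_n^q(s_q(\xi))\Omega$ that the paper quotes---you could tighten the argument by observing this directly rather than passing through the weaker unitriangularity statement.
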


\begin{proof}
$(1)$ This follows directly from Lemma \ref{VectorinMq} as $\xi\in \mathfrak{D}(A^{-\half})$.

\noindent$(2)$ From the Wick product formula in Prop. $2.9$ of \cite{BKS}, it follows that $\xi^{\otimes n}=H_{n}^{q}(s_q(\xi))\Omega$ for all $n\geq 0$ $($by convention $\xi^{\otimes 0}=\Omega)$. Thus, $\xi^{\otimes n} \in M_\xi\Omega$ for all $n\geq 0$. It is now clear that $\overline{span\text{ } \mathcal{E}_\xi}^{\norm{\cdot}_q}\subseteq \overline{M_\xi\Omega}^{\norm{\cdot}_{q}}$. Now use Stone-Weierstrass and Kaplansky density theorems or the fact that $\overline{M_\xi\Omega}^{\norm{\cdot}_{q}}\cong L^{2}([-\frac{2}{\sqrt{ 1-q}}, \frac{2}{\sqrt{ 1-q}}],\nu_q)$ to establish the reverse inclusion. 
\end{proof}

\begin{Theorem}\label{eigen-vector}
Let $\xi\in \CH_\R$ be a unit vector. There exists unique $\varphi$-preserving faithful normal conditional expectation $\mathbb{E}_{\xi} : M_q \rightarrow M_{\xi}$ if and only if $s_q(\xi) \in M_q^{\varphi}$, equivalently $U_t\xi = \xi$ for all $t \in \R$.
\end{Theorem}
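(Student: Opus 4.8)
The plan is to establish the two equivalences by showing a cycle of implications. I would first prove the equivalence ``$U_t\xi=\xi$ for all $t$'' $\Longleftrightarrow$ ``$s_q(\xi)\in M_q^\varphi$''. The forward direction is immediate from Eq. \eqref{modularaut}: if $U_t\xi=\xi$ then $\sigma_{-t}^{\varphi}(s_q(\xi))=s_q(U_t\xi)=s_q(\xi)$ for all $t$, so $s_q(\xi)\in M_q^\varphi$. For the converse, if $s_q(\xi)\in M_q^\varphi$ then applying $\sigma_{-t}^\varphi$ and looking at the action on $\Omega$ gives $U_t\xi = s_q(U_t\xi)\Omega=\sigma_{-t}^\varphi(s_q(\xi))\Omega=s_q(\xi)\Omega=\xi$ for all $t\in\R$, using that $\Omega$ is separating. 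The subtle point is that $s_q(U_t\xi)\Omega=U_t\xi$ by definition of $s_q$, so this is really just reading off the first tensor component.

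Next I would treat the conditional expectation. For the ``if'' direction, assume $U_t\xi=\xi$ for all $t$. Then $M_\xi\subseteq M_q^\varphi$, and since $M_\xi$ is globally invariant under $(\sigma_t^\varphi)$ (indeed pointwise fixed), Takesaki's theorem guarantees the existence of a unique $\varphi$-preserving faithful normal conditional expectation $\E_\xi:M_q\to M_\xi$. This is the cheap half. For the ``only if'' direction --- which I expect to be the main obstacle --- suppose such an $\E_\xi$ exists but $U_t\xi\neq\xi$ for some $t$, i.e. $\xi$ is not fixed. I would aim for a contradiction by testing $\E_\xi$ against a carefully chosen element and exploiting the modular structure. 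The idea: a $\varphi$-preserving conditional expectation onto $M_\xi$ forces $\overline{M_\xi\Omega}^{\norm{\cdot}_q}$ (which by Lemma \ref{Intersection}(2) equals $\overline{\text{span}\,\mathcal{E}_\xi}^{\norm{\cdot}_q}$, the closed span of the $\xi^{\otimes n}$) to be globally invariant under the modular operator $\Delta$ (equivalently, under $J$, or under $\CF(U_t)$). But by Eq. \eqref{modulartheory}, $\Delta(\xi^{\otimes n})=A^{-1}\xi\otimes\cdots\otimes A^{-1}\xi$ (at least on the appropriate domain), and if $\xi$ is not an eigenvector of $A$ with eigenvalue $1$ — which is exactly the failure of $U_t\xi=\xi$ — then $A^{-1}\xi$ is not a scalar multiple of $\xi$, so $\Delta(\xi^{\otimes 1})=A^{-1}\xi\notin\C\xi = \overline{M_\xi\Omega}^{\norm{\cdot}_q}\cap\CH$, giving the contradiction.

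The technical care needed in that last step: I must first justify that a $\varphi$-preserving faithful normal conditional expectation $\E_\xi:M_q\to M_\xi$ implies $M_\xi$ is $(\sigma_t^\varphi)$-invariant (this is the standard Takesaki characterization, so $\sigma_t^\varphi\circ\E_\xi=\E_\xi\circ\sigma_t^\varphi$ and $\sigma_t^\varphi(M_\xi)=M_\xi$), hence the Jones projection $e_\xi$ onto $\overline{M_\xi\Omega}^{\norm{\cdot}_q}$ commutes with $\Delta^{it}=\CF(U_{-t})$ for all $t$, so $\overline{M_\xi\Omega}^{\norm{\cdot}_q}$ is invariant under each $\CF(U_t)$ and under $\Delta^{it}$, hence (via spectral/analytic-continuation arguments on the cyclic vector $\Omega$) the domain intersection is dense and one can apply $\Delta^{1/2}$ or simply note $\CF(U_t)\xi = U_t\xi$ must lie in $\C\xi$ for all $t$. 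Actually this already gives it directly: $U_t\xi\in\overline{M_\xi\Omega}^{\norm{\cdot}_q}\cap\CH_\C=\C\xi$ for all $t$, and since $\norm{U_t\xi}_U=\norm{\xi}_U=1$ and $t\mapsto U_t\xi$ is strongly continuous with $U_0\xi=\xi$, we get $U_t\xi=\xi$ for all $t$. So the real content is the invariance of $\overline{M_\xi\Omega}^{\norm{\cdot}_q}$ under $\CF(U_t)$ coming from $(\sigma_t^\varphi)$-invariance of $M_\xi$, for which I would cite Takesaki's conditional expectation theorem. I expect the write-up to be short once that citation is in place; the only thing to watch is keeping the distinction between $\widetilde s_q$ and $s_q$ straight and confirming $\overline{M_\xi\Omega}^{\norm{\cdot}_q}\cap\CH_\C$ is exactly $\C\xi$, which follows from Lemma \ref{Intersection}(2) together with the fact that distinct tensor powers live in orthogonal summands $\CH^{\otimes_q n}$.
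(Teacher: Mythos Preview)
Your approach is essentially the same as the paper's. Both directions invoke Takesaki's theorem: the easy direction uses $M_\xi\subseteq M_q^\varphi$ to get existence, and the hard direction uses $\sigma_t^\varphi$-invariance of $M_\xi$ to force $U_t\xi\in\overline{M_\xi\Omega}^{\norm{\cdot}_q}\cap\CH=\C\xi$. The paper phrases this last step via an orthonormal-basis expansion $\sigma_t^\varphi(s_q(\xi))\Omega=\sum_n a_n(t)\xi^{\otimes n}$ and then observes the left side lies in the $1$-particle space, while you intersect directly; these are the same argument.

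There is one small gap in your final step. From $U_t\xi=\lambda_t\xi$ with $|\lambda_t|=1$, $\lambda_0=1$, and $t\mapsto\lambda_t$ continuous, you cannot yet conclude $\lambda_t\equiv 1$: a priori $\lambda_t=e^{it\theta}$ is possible. You must also use that $(U_t)$ preserves $\CH_\R$, so $U_t\xi\in\CH_\R$ and hence $\lambda_t\in\R$ (since $\CH_\R\cap i\CH_\R=\{0\}$); then $\lambda_t\in\{\pm1\}$ and connectedness gives $\lambda_t=1$. The paper makes exactly this point (phrasing it as ``$\lambda_t\in\{\pm1\}$ since $s_q(\xi)$ is self-adjoint''). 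Once you add this, your write-up is complete.
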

 
\begin{proof}
Suppose there exists a conditional expectation $\mathbb{E}_{\xi} : M_q \rightarrow M_{\xi}$ 
such that $\varphi(\mathbb{E}_{\xi}(x)) = \varphi(x)$, for all $x \in M_q$. Clearly, $\mathbb{E}_{\xi}$ is faithful and normal. 
By Takesaki's theorem \cite{Ta}, we have $\sigma_t^{\varphi}(M_{\xi}) = M_{\xi}$ for all $t\in \R$. Moreover, from  \cite{Ta} we have $ \mathbb{E}_{\xi}\circ \sigma_t^{\varphi} = \sigma_t^{\varphi} \circ \mathbb{E}_{\xi}$ for all $t\in \R$. Thus, 
\begin{align*}
\mathbb{E}_{\xi}( \sigma_t^{\varphi}(s_q(\xi)) )= \sigma_t^{\varphi} (\mathbb{E}_{\xi} (s_q(\xi))) \text{ for all }t.  
\end{align*}
Let $P_{\xi}: L^{2}(M_q,\varphi)\rightarrow \overline{M_{\xi}\Omega}^{\norm{\cdot}_q}$ denote the orthogonal projection $(L^{2}(M_q,\varphi)=\mathcal{F}_q(\CH))$. Since $\varphi(s_q(\xi))=0$, so $\varphi(\sigma_t(s_q(\xi)))=0$ for all $t\in \R$ as well. Thus, using Lemma \ref{Intersection} and expanding in terms of orthonormal basis, we have $\sigma_t^{\varphi}(s_q(\xi)) \Omega= \sum_{n = 1}^{\infty} a_n(t) \xi^{\otimes n }$, $a_n(t)\in \C$, for all $t\in \R$.  Hence, from Eq. \eqref{modularaut}, we have 
\begin{align*}
U_{-t}\xi&= s_q(U_{-t}\xi)\Omega = \sigma_{t}^{\varphi}(s_q(\xi))\Omega\\
&=\sigma_t^{\varphi}(\mathbb{E}_{\xi}(s_q(\xi)))\Omega = \mathbb{E}_{\xi} ( \sigma_t^{\varphi}(s_q(\xi)))\Omega\\
&= P_\xi \sigma_t^{\varphi}(s_q(\xi)) P_\xi \Omega\\
&= P_\xi \sum_{n = 1}^{\infty} a_n(t) \xi^{\otimes n }\\
& = \sum_{n = 1}^{\infty} a_n(t) \xi^{\otimes n}.
\end{align*}
Consequently, $a_{n}(t)=0$ for all $n\geq 2$ from Eq. \eqref{modulartheory}, and 
\begin{align*}
U_{-t} \xi = a_1(t)\xi = \lambda_{t}\xi, \text{ for all } t \in\R. 
\end{align*}
As $\Omega$ is seperating for $M_q$, it follows that $\sigma_t^{\varphi}(s_q(\xi)) = \lambda_t s_q(\xi)$. Thus, $\lambda_t\lambda_s=\lambda_{t+s}$ for all $t,s\in \R$, $\lambda_{0}=1$, $\lambda_{t}\in\{\pm 1\}$ $($as $s_q(\xi)$ is self-adjoint$)$ and $t\mapsto \lambda_t$ is continuous. Since the image of a connected set under a continuous map is connected, so either $\lambda_t=1$ for all $t$ or $\lambda_t=-1$ for all $t$. But $\lambda_0 =1$, so $\lambda_t=1$ for all $t$. Hence, $s_q(\xi)\in M_q^{\varphi}$. 

Conversely, suppose $s_q(\xi) \in M_{q}^{\varphi}$. Then $ M_\xi \subseteq M_q^{\varphi}$ and the modular group fixes $M_q^{\varphi}$ pointwise. Now use Takesaki's theorem \cite{Ta} to finish the proof. 
\end{proof}
 
We end this section with the following observation.  

\begin{Lemma}\label{fixed}
For $\eta \in M_q^{\prime}\Omega $ and $\zeta\in M_q\Omega $ one has $s_q(\zeta) \eta = d_q(\eta)\zeta$. In particular, for  $\eta\in \mathcal{Z}(M_q)\Omega$ the same holds. 
\end{Lemma}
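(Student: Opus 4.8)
The plan is to show that for $\eta \in M_q^\prime\Omega$ and $\zeta \in M_q\Omega$, the two operators $s_q(\zeta)$ and $d_q(\eta)$ (defined via Eq. \eqref{sq}, i.e. $s_q(\zeta)$ is the unique element of $M_q$ with $s_q(\zeta)\Omega = \zeta$, and $d_q(\eta)$ the unique element of $M_q^\prime$ with $d_q(\eta)\Omega = \eta$) satisfy $s_q(\zeta)\eta = d_q(\eta)\zeta$. The key point is simply that $s_q(\zeta) \in M_q$ and $d_q(\eta) \in M_q^\prime$ commute, because one lies in the von Neumann algebra and the other in its commutant, both acting on the GNS space $\CF_q(\CH)$. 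So I would write
\begin{align*}
s_q(\zeta)\eta = s_q(\zeta) d_q(\eta)\Omega = d_q(\eta) s_q(\zeta)\Omega = d_q(\eta)\zeta,
\end{align*}
where the middle equality uses that $s_q(\zeta) d_q(\eta) = d_q(\eta) s_q(\zeta)$ as operators on $\CF_q(\CH)$, since $M_q$ and $M_q^\prime$ are mutual commutants in standard form (as recorded in \S\ref{Commute}, $\Gamma_q(\CH_\R,U_t)^\prime = \{d_q(\xi):\xi\in\CH_\R^\prime\}^{\prime\prime}$, and $\Omega$ is cyclic and separating for $M_q$ by \S\ref{ConsHiai}).

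For the ``in particular'' clause: if $\eta \in \mathcal{Z}(M_q)\Omega$, then since $\mathcal{Z}(M_q) = M_q \cap M_q^\prime \subseteq M_q^\prime$, we have $\eta \in M_q^\prime\Omega$, so the statement applies directly. One may additionally remark that in this case $d_q(\eta)$ coincides with $s_q(\eta)$ (the unique element of $M_q$ mapping $\Omega$ to $\eta$ also lies in $M_q^\prime$ by separatingness of $\Omega$ for $M_q^\prime$, hence equals $d_q(\eta)$), so that $s_q(\zeta)\eta = d_q(\eta)\zeta = s_q(\eta)\zeta$, reflecting that central elements act ``the same'' from both sides. This is worth spelling out because the notation $s_q$ and $d_q$ is otherwise reserved for the left and right constructions.

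There is essentially no obstacle here — this is a one-line consequence of the commutation theorem for a von Neumann algebra and its commutant together with the defining property of the map $\zeta \mapsto s_q(\zeta)$ (resp. $\eta \mapsto d_q(\eta)$). The only thing to be careful about is well-definedness: $s_q(\zeta)$ and $d_q(\eta)$ are only guaranteed to exist as \emph{unique} elements of $M_q$, $M_q^\prime$ because $\Omega$ is separating for both $M_q$ and $M_q^\prime$ (equivalently, cyclic for both), which was established in \S\ref{ConsHiai} since $\Gamma_q(\CH_\R,U_t)^{\prime\prime}$ acts on $\CF_q(\CH)$ in standard form. So the proof is genuinely immediate once one unwinds the definitions.
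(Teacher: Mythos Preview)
Your proof is correct and essentially identical to the paper's: both simply write $s_q(\zeta)\eta = s_q(\zeta)d_q(\eta)\Omega = d_q(\eta)s_q(\zeta)\Omega = d_q(\eta)\zeta$, using that $s_q(\zeta)\in M_q$ and $d_q(\eta)\in M_q^\prime$ commute. Your additional remarks on well-definedness and the coincidence $s_q(\eta)=d_q(\eta)$ for central $\eta$ are accurate elaborations but not present in the paper's one-line proof.
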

\begin{proof}
First note that the operators in the statement are defined by Eq. \eqref{sq}. Now $s_q(\zeta)\eta = s_q(\zeta)d_q(\eta)\Omega = d_q(\eta)s_q(\zeta) \Omega=d_q(\eta)\zeta$. 
\end{proof}


\section{Strong Mixing of Generator Masas}\label{strongmixing}

In this section, we intend to show that for any unit vector $\xi_{0}\in \CH_\R$ with $U_t\xi_{0}=\xi_{0}$ for all $t\in\R$, the abelian algebra $M_{\xi_{0}}$ of $M_q$ is a masa and possess vigorous mixing properties. Needless to say, such masa is then singular from \cite{FMI77,Muk13,Jan-Fang-Kunal}. In order to do so,  we need some general facts on masas. Most of these facts appear in the literature in the framework of finite von Neumann algebras. But, the masas of our interest in $M_q$ lie in the centralizer $M_q^{\varphi}$ by Thm. \ref{eigen-vector}; so we can freely invoke most of these techniques $($used for finite von Neumann algebras$)$ in our set up as well. We recall without proofs some facts that will be required in the sequel, as a detailed exposition would be a digression. The proofs of these facts are analogous to the ones for the tracial case.

Let $M$ be a von Neumann algebra equipped with a faithful normal state $\varphi$. Let $M$ act on the GNS Hilbert space $L^{2}(M,\varphi)$ via left multiplication and let $\norm{\cdot}_{2,\varphi}$ denote the norm of $L^{2}(M,\varphi)$. Let $J_{\varphi}, \Omega_{\varphi}$ respectively denote the associated modular conjugation operator and the vacuum vector, and let $(\sigma_{t}^{\varphi})_{t\in \R}$ denote the modular automorphisms associated to $\varphi$. Let $A\subseteq M$ be a diffuse abelian von Neumann subalgebra contained in $M^{\varphi}=\{x\in M: \sigma_t^{\varphi}(x)=x \text{ }\forall \text{ }t\in\R\}$. Then there exists a unique faithful, normal and $\varphi$-preserving conditional expectation $\mathbb{E}_{A}$ from $M$ on to $A$ \cite{Ta}. Let  $L^{2}(A,\varphi)= \overline{A\Omega_{\varphi}}^{\norm{\cdot}_{2,\varphi}}$. Denote $\mathcal{A}=(A\cup J_{\varphi}AJ_{\varphi})^{\prime\prime}$. Then $\mathcal{A}$ is abelian, so its commutant is a type $\rm{I}$ algebra. 
Note that $A^{\prime}\cap M$ is globally invariant under $(\sigma_t^{\varphi})$, thus there exists a unique faithful, normal and $\varphi$-preserving conditional expectation from $M$ on to $A^{\prime}\cap M$ $($see \cite{Ta}$)$, and the associated Jones' projection $e_{A^{\prime}\cap M}\in \mathcal{A}$ \cite[Lemma 7.1.1]{SS} and is a central projection of $\mathcal{A}^{\prime}$.\footnote{The right action $J_\varphi u J_\varphi$ for $u\in \mathcal{U}(A)$ is right multiplication by $u^*$, since $u$ is analytic. Thus, the proof of \cite[Lemma 7.1.1]{SS} works out in our set up.}
$($This fact will not be directly used in this paper, nevertheless, it is worth mentioning as it is this fact for which the theory of bimodules of masas work and is indispensable$)$. This algebra $\mathcal{A}$  has been studied extensively by many experts in the context of masas to understand the size of normalizers, orbit equivalence, mixing properties and to provide invariants of masas. In short, $\mathcal{A}$ captures the structure of $L^{2}(M,\varphi)$ as a $A$-$A$ bimodule $($see Ch. 6, 7 \cite{SS}$)$. 

With the set up as above we define the following:

\begin{Definition}$($c.f. \cite{Jan-Fang-Kunal}$)$\label{sm}
The diffuse abelian algebra $A\subseteq M$ is said to be $\varphi$-strongly mixing in $M$ if $\norm{\mathbb{E}_{A}(xa_ny)}_{2,\varphi}\rightarrow 0$ for all $x,y\in M$ with $\mathbb{E}_{A}(x)=0=\mathbb{E}_{A}(y)$, whenever $\{a_n\}$ is a bounded sequence in $A$ that goes to $0$ in the $w.o.t$.
\end{Definition}
In fact, by a polarization identity it is enough to check the convergence of $\mathbb{E}_{A}(xa_nx^{*})$ in Defn. \ref{sm} for all $x\in M$ such that $\mathbb{E}_{A}(x)=0$. 

Let $M_{a}$ denote the $*$-subalgebra of all entire $($analytic$)$ elements of $M$ with respect to $(\sigma_t^{\varphi})$. For $x\in M$ and $y\in M_a$, define
\begin{align}\label{T-operator}
T_{x,y}: L^{2}(A,\varphi)\rightarrow L^{2}(A,\varphi) \text{ by } T_{x,y}(a\Omega_{\varphi})=\mathbb{E}_{A}(xay)\Omega_{\varphi}, \text{ }a\in A. 
\end{align}
Note that $T_{x,y}$ is bounded. Indeed, as $y\in M_a$ so $y^{*}\in \mathfrak{D}(\sigma_{z}^{\varphi})$ for all $z\in\mathbb{C}$. Hence, $J_{\varphi}\sigma_{-\frac{i}{2}}^{\varphi}(y^*)J_{\varphi}a\Omega_{\varphi} = ay\Omega_{\varphi}$ for all $a\in A$, where $(\sigma_{z}^{\varphi})_{z\in \C}$ denotes the analytic continuation of $(\sigma_{t}^{\varphi})$ $($see \cite{Fal00}$)$. Thus, 
\begin{align*}
\norm{\mathbb{E}_{A}(xay)\Omega_{\varphi}}_{2,\varphi}&\leq \norm{xay\Omega_{\varphi}}_{2,\varphi}\\
&\leq \norm{x}\norm{ay\Omega_{\varphi}}_{2,\varphi}\\
&\leq\norm{x}\norm{J_{\varphi}\sigma_{-\frac{i}{2}}^{\varphi}(y^*)J_{\varphi}}\norm{a\Omega_{\varphi}}_{2,\varphi}\\
&= \norm{x}\norm{\sigma_{-\frac{i}{2}}^{\varphi}(y^*)}\norm{a\Omega_{\varphi}}_{2,\varphi}, \text{ for all }a\in A.
\end{align*} 

One can identify $A\cong L^{\infty}(X,\lambda)$, where $X$ is a standard Borel space and 
$\lambda$ is a non-atomic probability measure on $X$. The \textit{left-right} measure of $A$ is the measure $($strictly speaking the measure class$)$ on $X\times X$ obtained from the direct integral decomposition of $L^{2}(M,\varphi)\ominus L^{2}(A,\varphi)$ over $X\times X$ so that $\mathcal{A}$ is the algebra of diagonalizable operators with respect to the decomposition \cite[Defn. 3]{Muk13}, \cite{Jan-Fang-Kunal}. If $A$ is identified with $L^{\infty}([a,b],\lambda)$ where $\lambda$ is the normalized Lebesgue measure $($or Lebesgue equivalent$)$, then from the results of \S2 of \cite{Muk13} $($specifically Thm. 2.1$)$, it follows that the \textit{left-right} measure of $A$ is \textit{Lebesgue absolutely continuous} when $T_{x,y^{*}}$ is Hilbert-Schmidt for $x,y$ varying over a set $S$ such that $\mathbb{E}_{A}(x)=0=\mathbb{E}_{A}(y)$ for all $x,y\in S$ and the span of $S\Omega$ is dense in ${L^{2}(A,\varphi)}^{\perp}$. $($Note that the arguments of \S2 of \cite{Muk13} use the unit interval. It was so chosen  to make a standard frame of reference. However, the arguments of \S2 relating to absolute continuity of measures do not depend on the choice of the interval. Neither does the same arguments to prove Thm. 2.1 in \cite{Muk13} required that $A$ is a masa; it only involved measure theory relevant to the context.$)$ From Thm. 4.4 and Rem. 4.5 of \cite{CFM2} $($similarly the proof of Thm. 4.4 of \cite{CFM2} uses measure theory and not that the diffuse abelian algebra there is a masa$)$, it follows that $A$ is $\varphi$-strongly mixing in $M$ if the \textit{left-right} measure of $A$ is Lebesgue absolutely continuous. Thus, one has:

\begin{Theorem}\label{AllHilbertSchmidt}
Let $A\subseteq M$ be a diffuse abelian algebra such that $A\subseteq M^\varphi$ and the left-right measure of $A$ is Lebesgue absolutely continuous. Then, $A$ is $\varphi$-strongly mixing in $M$. In particular, $A$ is a singular masa in $M$.
\end{Theorem}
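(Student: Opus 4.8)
The plan is to read off $\varphi$-strong mixing from the left-right measure hypothesis using the machinery recalled just before the statement, and then to deduce that $A$ is a singular masa as a formal consequence. First I would fix an identification $A\cong L^{\infty}([a,b],\lambda)$ with $\lambda$ in the Lebesgue class, so that $L^{2}(M,\varphi)\ominus L^{2}(A,\varphi)$ is realized as a direct integral over $[a,b]\times[a,b]$ with $\mathcal{A}=(A\cup J_{\varphi}AJ_{\varphi})^{\prime\prime}$ acting as the diagonalizable operators; the left-right measure is then the resulting measure class on $[a,b]\times[a,b]$, and by hypothesis it is Lebesgue absolutely continuous. At this point I would invoke Thm.~2.1 of \cite{Muk13} together with Thm.~4.4 and Rem.~4.5 of \cite{CFM2}. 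The only subtlety is that these were stated for tracial (or finite) von Neumann algebras; but, as the discussion preceding the statement emphasizes, their proofs are purely measure-theoretic, and the passage to the present setting only requires (i) the $\varphi$-preserving conditional expectation $\mathbb{E}_{A}$, which exists by Takesaki's theorem since $A\subseteq M^{\varphi}$, (ii) that the right action of $\mathcal{U}(A)$ on $L^{2}(M,\varphi)$ is right multiplication by the inverse, which holds because elements of $A\subseteq M^{\varphi}$ are entire, and (iii) that $\varphi$ restricts to a trace on $M^{\varphi}\supseteq A$. Granting these, one obtains $\norm{\mathbb{E}_{A}(xa_{n}y)}_{2,\varphi}\to 0$ whenever $\mathbb{E}_{A}(x)=\mathbb{E}_{A}(y)=0$ and $\{a_{n}\}\subseteq A$ is bounded with $a_{n}\to 0$ weakly; that is, $A$ is $\varphi$-strongly mixing in $M$.

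For the final assertion I would first check that $\varphi$-strong mixing forces $A$ to be maximal abelian: if there were $x\in (A^{\prime}\cap M)\setminus A$, subtract $\mathbb{E}_{A}(x)$ to assume $\mathbb{E}_{A}(x)=0$ and $x\neq 0$, pick unitaries $a_{n}\in A$ with $a_{n}\to 0$ weakly (available since $A$ is diffuse), and note that $xa_{n}x^{*}=a_{n}xx^{*}$, so $\mathbb{E}_{A}(xa_{n}x^{*})=a_{n}\mathbb{E}_{A}(xx^{*})$; since $\varphi(\mathbb{E}_{A}(xx^{*}))=\varphi(xx^{*})>0$ and $\norm{a_{n}\mathbb{E}_{A}(xx^{*})}_{2,\varphi}=\norm{\mathbb{E}_{A}(xx^{*})}_{2,\varphi}$ is a fixed positive constant, this contradicts strong mixing, so $A^{\prime}\cap M=A$. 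Singularity then follows from \cite{FMI77,Muk13,Jan-Fang-Kunal}: a normalizing unitary $u\in\mathcal{N}_{M}(A)$ with $\mathbb{E}_{A}(u)=0$ would satisfy $ua_{n}u^{*}\in A$, whence $\norm{\mathbb{E}_{A}(ua_{n}u^{*})}_{2,\varphi}=\norm{ua_{n}u^{*}}_{2,\varphi}=1\not\to 0$, again contradicting strong mixing; thus every normalizing unitary has non-zero $A$-conditional expectation, and the cited singularity criterion (whose proof likewise uses only the measure theory of the left-right measure and the fact that $A\subseteq M^{\varphi}$) upgrades this to $\mathcal{N}_{M}(A)^{\prime\prime}=A$.

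The step requiring the most care --- more a matter of bookkeeping than a genuine obstacle --- is precisely this transference: confirming that Thm.~2.1 of \cite{Muk13}, Thm.~4.4 and Rem.~4.5 of \cite{CFM2}, and the singularity results of \cite{FMI77,Muk13,Jan-Fang-Kunal}, all originally formulated for masas in finite von Neumann algebras, remain valid verbatim for a diffuse abelian $A\subseteq M^{\varphi}$ with $M$ merely $\sigma$-finite, the hypothesis $A\subseteq M^{\varphi}$ supplying the $\varphi$-preserving expectation, the entireness of elements of $A$, and the trace $\varphi|_{M^{\varphi}}$ that these arguments implicitly use.
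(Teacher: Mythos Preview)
Your proposal is correct and follows essentially the same route as the paper: strong mixing is read off from the Lebesgue absolute continuity of the left-right measure via the cited results of \cite{Muk13} and \cite{CFM2} (exactly as in the discussion preceding the statement), the masa property is deduced from strong mixing by the same relative-commutant argument (take $y=x-\mathbb{E}_{A}(x)$, use $yu_{n}y^{*}=u_{n}yy^{*}$ and the constancy of $\norm{u_{n}\mathbb{E}_{A}(yy^{*})}_{2,\varphi}$), and singularity is outsourced to the literature. Your additional paragraph on normalizing unitaries is harmless but superfluous and a bit imprecise --- showing $\mathbb{E}_{A}(u)\neq 0$ for every normalizer is not by itself a recognized singularity criterion --- so, like the paper, you should simply cite \cite{FMI77,FMII77,Muk09} (or \cite{Muk13,Jan-Fang-Kunal}) directly for this step.
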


\begin{proof}
We only need to show that $A$ is a singular masa in $M$. Let $x\in A^{\prime}\cap M$. Let $y=x-\mathbb{E}_{A}(x)$. Since $A$ is diffuse choose a sequence of unitaries $u_n\in A$ such that $u_n\rightarrow 0$ in $w.o.t$. Thus, by the previous discussion and by the hypothesis, it follows that $\norm{yy^*}_{2,\varphi}=\lim_{n}\norm{\mathbb{E}_{A}(yu_n y^*)}_{2,\varphi}=0$. Thus, $y=0$ proving $A$ is a masa. 

That $A$ is singular follows from results of \cite{FMI77,FMII77}and \cite{Muk09}.
\end{proof}

We are now ready to prove that if $\xi_0\in \CH_\R$ is a unit vector such that $U_t\xi_0=\xi_0$ for all $t\in\R$, then $M_{\xi_0}$ is $\varphi$-strongly mixing in $M_q$. Let $\mathbb{E}_{\xi_0}$ denote the unique $\varphi$-preserving, faithful, normal conditional expectation from $M_q$ onto $M_{\xi_0}$ $($see Thm. \ref{eigen-vector}$)$. Extend $\xi_0$ to an orthonormal basis 
\begin{align*}
\mathcal{O}=\{\xi_k:\xi_{k} \text{ analytic, } 0\leq k\leq dim(\CH_\R)-1\}
\end{align*}
of $\CH_\R$ with respect to $\langle\cdot,\cdot\rangle_{\CH_\C}$ consisting of analytic vectors 
as described in Prop. \ref{analytic}. Fix $\xi_{i_j}\in \mathcal{O}$ for $1\leq j\leq n$. Note that as the analytic elements form a $(w^*$-dense$)$ $*$-subalgebra, so $s_q(\xi_{i_1}\otimes\cdots\otimes \xi_{i_n})$ is analytic with respect to $(\sigma_t^{\varphi})$ from $($the proof of$)$ Lemma \ref{VectorinMq}. Again by Rem. \ref{analytic_extension_basis}, it follows that $s_q(A^{-\half}\xi_k)$ is also analytic with respect to $(\sigma_t^{\varphi})$ for all $\xi_k\in \mathcal{O}$. Thus, by $($the proof of$)$ Lemma \ref{VectorinMq}, it follows that $s_q(A^{-\half}\xi_{i_1}\otimes\cdots\otimes A^{-\half}\xi_{i_n})$ is also analytic with respect to $(\sigma_t^{\varphi})$. Moreover, from Lemma \ref{Intersection} and Lemma \ref{Orthogonality} it follows that  $\mathbb{E}_{\xi_0}(s_q(\xi_{i_1}\otimes\cdots\otimes \xi_{i_n}))=0$ forces that at least one letter $\xi_{i_j}$ must be different from $\xi_0$. Furthermore, from Lemma \ref{Orthogonality} it follows that $A^{-\half}\xi_{i_1}\otimes\cdots\otimes A^{-\half}\xi_{i_n}\in \mathcal{F}_q(\CH)\ominus L^{2}(M_{\xi_0},\varphi)$ if and only if  $\xi_{i_1}\otimes\cdots\otimes \xi_{i_n}\in \mathcal{F}_q(\CH)\ominus L^{2}(M_{\xi_0},\varphi)$. 

In the light of the above discussion, the following theorem is crucial for our purpose.		
											
\begin{Theorem}\label{stronglymixingmasa}
Let $t\mapsto U_t$ be a strongly continuous orthogonal representation of $\R$ on a real Hilbert space $\CH_\R$ with $dim(\CH_\R)\geq 2$. Suppose there exists a unit vector $\xi_0\in\CH_\R$ such that $U_t\xi_0=\xi_0$ for all $t\in\R$. Let $x = s_q(\xi_{i_1}\otimes\cdots\otimes\xi_{i_m})$ and  $y=s_q( A^{-\half}\xi_{j_1}\otimes\cdots\otimes A^{-\half} \xi_{j_k})$ be such that $\mathbb{E}_{\xi_0}(x)=0=\mathbb{E}_{\xi_0}(y)$, where $\xi_{i_u}, \xi_{j_v}\in \mathcal{O}$ for $1\leq u\leq m$ and $1\leq v\leq k$. Then, $T_{x,y}$ is a Hilbert-Schmidt operator.
\end{Theorem}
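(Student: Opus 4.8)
The plan is to realize $T_{x,y}$ explicitly on the orthonormal basis $\{\xi_0^{\otimes n}:n\geq 0\}$ of $L^2(M_{\xi_0},\varphi)$ (after normalization by $\sqrt{[n]_q!}$, as in Eq. \eqref{Normelt}) and to bound the Hilbert--Schmidt norm directly by summing $\norm{T_{x,y}(\xi_0^{\otimes n})}_q^2$ over $n$. First I would use the observation (Lemma \ref{fixed} and the remarks following Eq. \eqref{sq}) that since $y = s_q(A^{-\half}\xi_{j_1}\otimes\cdots\otimes A^{-\half}\xi_{j_k})$ is analytic, the right action on the GNS space is implemented by $d_q$ of a vector in $\CH_\R'$; concretely, $ay\Omega = d_q(\sigma^\varphi_{-i/2}(y^*)\,\text{-vector})\,a\Omega$ should be unwound so that $\xi_0^{\otimes n}y\Omega = \xi_0^{\otimes n}\otimes(A^{-\half}\xi_{j_1}\otimes\cdots\otimes A^{-\half}\xi_{j_k}) + (\text{lower order terms from }r_q^*)$. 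Here the hypothesis $\mathbb{E}_{\xi_0}(y)=0$ means at least one $\xi_{j_v}\neq\xi_0$, hence (by Lemma \ref{Orthogonality}(1)) $\langle\xi_0,\xi_{j_v}\rangle_U=0$ for that $v$, which kills the annihilation contributions of $r_q^*(\cdot)$ against the $\xi_0$-block, leaving a clean expression $\xi_0^{\otimes n}y\Omega = \xi_0^{\otimes n}\otimes A^{-\half}\xi_{j_1}\otimes\cdots\otimes A^{-\half}\xi_{j_k}$ (this is precisely the content of a right-handed analogue of Lemma \ref{Wickformula}).

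\textbf{Main computation.} Next I would apply $x = s_q(\xi_{i_1}\otimes\cdots\otimes\xi_{i_m}) = c_q(\xi_{i_1}\otimes\cdots\otimes\xi_{i_m}) + c_q(\xi_{i_1}\otimes\cdots\otimes\xi_{i_m})^*$ on the left of $\xi_0^{\otimes n}\otimes A^{-\half}\xi_{j_1}\otimes\cdots\otimes A^{-\half}\xi_{j_k}$, using Lemma \ref{SplitAdjoint} to split the annihilation part across the $\xi_0^{\otimes n}$-block and the $A^{-\half}\xi_{j}$-block. Again the hypothesis $\mathbb{E}_{\xi_0}(x)=0$ forces some $\xi_{i_u}\neq\xi_0$, so $\langle\xi_{i_u},\xi_0\rangle_U=0$; this means that when we expand $c_q(\xi_{i_1}\otimes\cdots)^*$ acting on $\xi_0^{\otimes n}\otimes(\cdots)$, the only surviving contractions pair letters of the $(\xi_{i_\bullet})$-word against letters of the $(A^{-\half}\xi_{j_\bullet})$-word, of which there are at most $\min(m,k)$ — a number independent of $n$. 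Then $\mathbb{E}_{\xi_0}$ projects onto $\overline{\text{span }\mathcal{E}_{\xi_0}}$; by Lemma \ref{Orthogonality}(2)--(3) the projection of a simple tensor $\zeta_1\otimes\cdots\otimes\zeta_r$ onto $\C\,\xi_0^{\otimes r}$ is nonzero only if \emph{every} $\zeta_\ell$ has $\langle\xi_0,\zeta_\ell\rangle_{\CH_\C}\neq 0$. Because the $(\xi_{i_\bullet})$ and $(A^{-\half}\xi_{j_\bullet})$ letters are orthogonal (in $\langle\cdot,\cdot\rangle_{\CH_\C}$) to $\xi_0$ except where they equal $\xi_0$, after all the contractions the resulting tensor has a bounded number of ``bad'' letters unless a great deal of cancellation has occurred; the upshot is that $\mathbb{E}_{\xi_0}(x\xi_0^{\otimes n}y)\Omega$ is supported on only finitely many values of the tensor degree near $n$, with coefficients controlled by $q$-combinatorial factors and the norm estimate \eqref{normestimate}, namely $\leq C_q^{(m+k)/2}\sqrt{[n']_q!}$ for $n'$ within $\max(m,k)$ of $n$.

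\textbf{Summing up.} Finally, normalizing: the unit vectors of $L^2(M_{\xi_0},\varphi)$ are $\xi_0^{\otimes n}/\sqrt{[n]_q!}$, so $\norm{T_{x,y}}_{HS}^2 = \sum_{n\geq 0}\norm{\mathbb{E}_{\xi_0}(x\xi_0^{\otimes n}y)}_q^2/[n]_q!$. Using that $\norm{\mathbb{E}_{\xi_0}(x\xi_0^{\otimes n}y)}_q^2 \leq (\text{const depending on }m,k,q)\cdot\sum_{|n'-n|\leq\max(m,k)}|q|^{2c(n)}[n']_q!$ where $c(n)\to\infty$ linearly in $n$ (each extra copy of $\xi_0$ in the word that must be contracted away or carried along contributes a definite power of $q$ from the $q^{i-1}$ and $q^n$ factors in Eqs. \eqref{Leftmult} and Lemma \ref{SplitAdjoint}), and that $[n']_q!/[n]_q!$ stays bounded for $|n'-n|$ bounded, the series is dominated by $\sum_n |q|^{2c(n)} < \infty$ since $|q|<1$. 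Hence $T_{x,y}$ is Hilbert--Schmidt.

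\textbf{Main obstacle.} The hard part will be the bookkeeping in the middle step: tracking exactly which letters of the word $\xi_{i_1}\otimes\cdots\otimes\xi_{i_m}$ get contracted against which letters of $A^{-\half}\xi_{j_1}\otimes\cdots\otimes A^{-\half}\xi_{j_k}$ versus which ones are annihilated against the $\xi_0^{\otimes n}$ block, keeping careful count of the powers of $q$ that accumulate (from $q^{i-1}$ in $c_q^*$, from the $q^n$ in Lemma \ref{SplitAdjoint}, and from the inversion counts in Eq. \eqref{qFock} when $\mathbb{E}_{\xi_0}$ is applied), and verifying that the net exponent of $|q|$ grows linearly in $n$ so that summability is genuine. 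One must also be careful that $A^{-\half}\xi_j$ need not be a unit vector nor orthogonal to $\xi_0$ in $\langle\cdot,\cdot\rangle_U$ unless $\langle\xi_0,\xi_j\rangle_{\CH_\C}=0$ (Lemma \ref{Orthogonality}(1),(3) handle this), so the reduction to the ``clean'' case must be justified letter-by-letter; Lemma \ref{Equivalent} and the estimates \eqref{Normelt}--\eqref{normestimate} provide the uniform control needed to make the crude bounds go through.
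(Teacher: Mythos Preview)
Your overall strategy---evaluate $T_{x,y}$ on the basis $\xi_0^{\otimes n}$, expand the left and right actions, and show the accumulated power of $q$ grows linearly in $n$---is exactly the paper's, and your identification of Lemma~\ref{SplitAdjoint} as the source of the decisive $q^n$ factor is correct. But two concrete steps in your outline are wrong as stated, and fixing them is not merely bookkeeping.

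First, the identity $s_q(\xi_{i_1}\otimes\cdots\otimes\xi_{i_m}) = c_q(\xi_{i_1}\otimes\cdots\otimes\xi_{i_m}) + c_q(\xi_{i_1}\otimes\cdots\otimes\xi_{i_m})^*$ is meaningless here: $c_q(\cdot)$ is defined only for vectors in $\CH$, and $s_q$ of a tensor is defined via Eq.~\eqref{sq}. The correct replacement is Hiai's Wick product formula (Lemma~3.1 of \cite{Hiai}), which expresses $s_q(\xi_{i_1}\otimes\cdots\otimes\xi_{i_m})$ and $d_q(A^{-\half}\xi_{j_1}\otimes\cdots\otimes A^{-\half}\xi_{j_k})$ as \emph{finite} sums of monomials in $c_q,c_q^*$ (resp.\ $r_q,r_q^*$) indexed by ordered partitions of $\{1,\dots,m\}$ (resp.\ $\{1,\dots,k\}$). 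The paper treats each such monomial separately.

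Second, your ``clean'' formula $\xi_0^{\otimes n}y\Omega = \xi_0^{\otimes n}\otimes A^{-\half}\xi_{j_1}\otimes\cdots\otimes A^{-\half}\xi_{j_k}$ fails unless \emph{every} $\xi_{j_v}\neq\xi_0$; the hypothesis $\mathbb{E}_{\xi_0}(y)=0$ only guarantees one such $v$. When some $\xi_{j_v}=\xi_0$, the $r_q^*(\xi_0)$ pieces in the Wick expansion of $d_q(\cdots)$ \emph{do} annihilate letters of $\xi_0^{\otimes n}$, producing the factor $\tfrac{[n]_q!}{[n-(k-p)]_q!}$ that the paper carries explicitly. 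Similarly, your assertion that ``the only surviving contractions pair letters of the $(\xi_{i_\bullet})$-word against letters of the $(A^{-\half}\xi_{j_\bullet})$-word'' is too strong: the $c_q^*(\xi_0)$ pieces can and do hit the $\xi_0^{\otimes(n-(k-p))}$ block. What \emph{is} true---and this is the crux of the paper's argument---is that the single term in the binary expansion via Lemma~\ref{SplitAdjoint} in which \emph{all} the $c_q^*$'s hit the $\xi_0$-block vanishes (since at least one $c_q^*(\xi_{i_{r_0}})$ has $\xi_{i_{r_0}}\perp\xi_0$); hence every surviving term has at least one annihilator deflected to the right block, yielding a coefficient $q^d$ with $d\geq (n-(k-p))-(m-l-1)$. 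Combined with \eqref{normestimate} this gives a bound of the form $\norm{\zeta_n}_q^2\leq K\,|q|^{2n}\bigl(\tfrac{[n]_{|q|}!}{[n-(k-p)]_{|q|}!}\sqrt{[n-N_0]_q!}\bigr)^2$, and the ratio test finishes as you anticipated. Your ``main obstacle'' paragraph is in fact closer to the truth than your ``main computation''; the Wick formula is precisely the tool that organizes that bookkeeping.
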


\begin{proof}
First of all, as $U_t\xi_0=\xi_0$ for all $t\in\R$, so $M_{\xi_0}\subseteq M_q$ is a diffuse abelian algebra in $M_q$ lying in $M_q^{\varphi}$. By the previous discussion, it follows that $x,y$ are analytic with respect to $(\sigma_t^{\varphi})$. Thus, $T_{x,y}\in \mathbf{B}(L^{2}(M_{\xi_0},\varphi))$.

Also note that  $\xi_{j_1}\otimes\cdots\otimes\xi_{j_k}\in M_q\Omega\cap M_q^{\prime}\Omega$ from Lemma \ref{VectorinMq}. From Lemma \ref{Intersection}, it follows that $H_n^q(s_q(\xi_0))\Omega=\xi_0^{\otimes n}$ for all $n\geq 0$. Note that $d_q( A^{-\half}\xi_{j_1}\otimes\cdots\otimes A^{-\half} \xi_{j_k})\in M_q^{\prime}$ by Thm. \ref{commutant}. Let $e_{\xi_0}:L^{2}(M_q,\varphi)\rightarrow L^{2}(M_{\xi_0},\varphi)$ denote the Jones' projection associated to $M_{\xi_0}$. Then from Eq. \eqref{T-operator}, we have 
\begin{align}\label{T-evaluateonwords}
T_{x, y}\left(H^q_n( s_q(\xi_0))\Omega\right)&=e_{\xi_0}\left(x H^q_n(s_q(\xi_0))s_q(A^{-\half}\xi_{j_1}\otimes\cdots\otimes A^{-\half}\xi_{j_k})\Omega\right) \\
\nonumber&= e_{\xi_0}\left(x H^q_n(s_q(\xi_0))(A^{-\half}\xi_{j_1}\otimes\cdots\otimes A^{-\half}\xi_{j_k})\right) \\
\nonumber&= e_{\xi_0}\left(x H^q_n(s_q(\xi_0)) d_q (A^{-\half}\xi_{j_1} \otimes \cdots \otimes A^{-\half}\xi_{j_k})\Omega\right) \\
\nonumber&\indent\indent\indent\indent\indent\indent\indent\indent\indent\indent\text{ (from Eq. \eqref{sq} and Lemma \ref{fixed})}\\
\nonumber&= e_{\xi_0}\left( x d_q(A^{-\half}\xi_{j_1}\otimes\cdots\otimes A^{-\half} \xi_{j_k})H^q_n(s_q(\xi_0))\Omega\right) \\
\nonumber&= e_{\xi_0}\left(x d_q(A^{-\half}\xi_{j_1}\otimes\cdots\otimes A^{-\half}\xi_{j_k})\xi_{0}^{ \otimes n}\right)\\
\nonumber&= e_{\xi_0}\left(s_q(\xi_{i_1}\otimes\cdots\otimes \xi_{i_m}) d_q(A^{-\half}\xi_{j_1}\otimes\cdots\otimes A^{-\half}\xi_{j_k})\xi_0^{ \otimes n}\right), \text{ }n\geq 0.
\end{align} 
Now from Lemma 3.1 of \cite{Hiai}, we have 
\begin{align*}
&s_q(\xi_{i_1}\otimes\cdots \otimes \xi_{i_m})
= \sum \sum q^{\aleph(K, I )}c_q(\xi_{i_{\kappa(1)}}) \cdots c_q(\xi_{i_{\kappa(n_1)}}) c_q(\xi_{i_{\pi(1)}})^*\cdots c_q(\xi_{i_{ \pi(n_2)}})^{*} \text{ and}\\
&d_q(A^{-\half}\xi_{j_1}\otimes \cdots \otimes A^{-\half}\xi_{j_k})\\
&~~~~~= \sum \sum q^{\aleph(K^\prime, I^\prime )}r_q(A^{-\half}\xi_{j_{\tilde\kappa(1)}}) \cdots r_q(A^{-\half}\xi_{j_{\tilde\kappa(m_1)}}) r_q(A^{-\half}\xi_{j_{\tilde\pi(1)}})^{*}\cdots r_q(A^{-\half}\xi_{j_{\tilde\pi(m_2)}})^{*},
\end{align*}   
where the first sum varies over the pairs $(n_1, n_2) $ and $(K, I)$ restricted to the following conditions:
\begin{align}\label{Constraints}{\begin{array}{cc}n_1,n_2 \geq 0,\\
n_1+n_2 = m; \end{array}} \text { and, } {\begin{array}{ccc} K = \{ \kappa(1), \cdots , \kappa(n_1): \kappa(1) \leq \cdots \leq\kappa(n_1)\},\\ I = \{ \pi(1), \cdots , \pi(n_2): \pi(1) \leq \cdots \leq \pi(n_2)\},\\ K \cup I = \{ 1, \cdots , m\}, K \cap I = \emptyset,
\end{array}}
\end{align} and $\aleph(K,I) = \# \{ (r, s) : 1 \leq r \leq n_1, 1 \leq s\leq n_2, \kappa (r) > \pi(s) \}$. Similarly, the expansion of $d_q(A^{-\half}\xi_{j_1} \otimes  \cdots \otimes A^{-\half}\xi_{j_k})$ above is in terms of $m_1,m_2\geq 0$, $m_1+m_2=k$, $K^{\prime}, I^{\prime}, \aleph(K^{\prime},I^{\prime})$, $\tilde\kappa,\tilde\pi$ and $r_q(A^{-\half}\xi_{j_{\tilde\kappa(\cdot)}}) $ and $r_q(A^{-\half}\xi_{j_{\tilde\pi(\cdot)}})^* $ defined analogous to Eq. \eqref{Constraints}.

Note that $\norm{\xi_0^{\otimes n}}_{q}^{2}=[n]_{q}!$ for all $n\geq 0$ $($see Eq. \eqref{Normelt}$)$. Again from Lemma \ref{Intersection}, it follows that $\{\frac{1}{\sqrt{[n]_{q}!}}\xi_0^{\otimes n}: n\geq 0\}$ is an orthonormal basis of $L^{2}(M_{\xi_0},\varphi)$. Thus, to show $T_{x,y}$ is a Hilbert-Schmidt operator we need to show that $\sum_{n=0}^{\infty} \frac{1}{[n]_{q}!}\norm{T_{x,y}(\xi_0^{\otimes n})}_{q}^{2}<\infty$. But since $s_q(\xi_{i_1}\otimes\cdots \otimes \xi_{i_m})$ and $d_q(A^{-\half}\xi_{j_1}\otimes \cdots \otimes A^{-\half}\xi_{j_k})$ split as finite sums, so from Eq. \eqref{T-evaluateonwords} it is enough to show that for each fixed $n_1,n_2, m_1,m_2,\kappa,\pi,\tilde{\kappa},\tilde{\pi}$ $($in Eq. \eqref{Constraints}$)$, if
\begin{align*}
\zeta_n&=e_{\xi_0}(\Big(c_q(\xi_{i_{\kappa(1)}}) \cdots c_q(\xi_{i_{\kappa(n_1)}}) c_q(\xi_{i_{\pi(1)}})^*\cdots c_q(\xi_{i_{ \pi(n_2)}})^{*}\\
&\indent\cdot  r_q(A^{-\half}\xi_{j_{\tilde\kappa(1)}}) \cdots r_q(A^{-\half}\xi_{j_{\tilde\kappa(m_1)}}) r_q(A^{-\half}\xi_{j_{\tilde\pi(1)}})^{*}\cdots r_q(A^{-\half}\xi_{j_{\tilde\pi(m_2)}})^{*}\Big)\xi_0^{\otimes n}), n\geq 0,
\end{align*}
then $\sum_{n=0}^{\infty}\frac{1}{[n]_{q}!}\norm{\zeta_n}_q^{2}<\infty$. Renaming indices, we may write 
\begin{align*} 
\zeta_{n}=e_{\xi_0}(\Big(c_q(\xi_{i_{1}})& \cdots c_q(\xi_{i_l}) c_q(\xi_{i_{l+1}})^*\cdots c_q(\xi_{i_{m}})^{*}\\
&\cdot r_q(A^{-\half}\xi_{j_{1}}) \cdots r_q(A^{-\half}\xi_{j_{p}}) r_q(A^{-\half}\xi_{j_{p+1}})^{*}\cdots r_q(A^{-\half}\xi_{j_{k}})^{*}\Big)\xi_0^{\otimes n}), \text{ }n\geq 0.
\end{align*}

For $\xi_{j^{\prime}}\in\mathcal{O}$, since $\langle\xi_{j^{\prime}}, \xi_0\rangle_q=0$ for $j^{\prime}\neq 0$ $($by Lemma \ref{Orthogonality}$)$, $($and hence $\lan A^{-\half}\xi_0,  A^{-\half}\xi_{j^{\prime}}\ran_q = 0$ for $j^{\prime}\neq  0$ by Eq. \eqref{A-Inner-1}$)$, so $r_q(A^{-\half}\xi_{j^{\prime}})^*\xi_0^{ \otimes n} = r_q(A^{-\half}\xi_{j^{\prime}})^*(A^{-\half}\xi_0)^{ \otimes n}=0$ for all $n\geq 0$ and $j^{\prime}\neq 0$.
Since at least one letter in $A^{-\half}\xi_{j_1}\otimes\cdots\otimes A^{-\half}\xi_{j_k}$ is different from $\xi_0$ and $A^{-\half} \xi_0 = \xi_0$, so $\zeta_n$ can be non zero only when $\xi_{j_{p+1}}=\cdots=\xi_{j_k}=\xi_0$. Write $\delta=\prod_{w=p+1}^{k}\delta_{\xi_{j_{w}},\xi_0}$. Hence, from Eq. \eqref{Rightmult} and Eq. \eqref{Rightmultadj} we have 
\begin{align}\label{NormofZeta}
\zeta_n&=\delta\prod_{t= n-(k-p)}^{n}(1+ q+ \cdots + q^{t-1})\\
\nonumber&\indent\cdot e_{\xi_0}\Big(c_q(\xi_{i_1})\cdots c_q(\xi_{i_{l}})
c_q(\xi_{i_{l+1}})^*\cdots c_q(\xi_{i_{m}})^*\big(\xi_{0}^{\otimes (n -(k-p))}\otimes 
                 A^{-\half}\xi_{j_1}\otimes\cdots\otimes A^{-\half}\xi_{j_{p}}\big)\Big)\\
\nonumber&=\delta\frac{[n]_q!}{[n-(k-p)]_q!}e_{\xi_0}\Big(c_q(\xi_{i_1})\cdots c_q(\xi_{i_{l}})
c_q(\xi_{i_{l+1}})^*\cdots c_q(\xi_{i_{m}})^*\big(\xi_{0}^{\otimes (n -(k-p))}\otimes A^{-\half}\xi_{j_1}\otimes\cdots\otimes  A^{-\half}\xi_{j_{p}}\big)\Big).
\end{align}
By hypothesis at least one letter in $A^{-\half}\xi_{j_1}\otimes\cdots\otimes A^{-\half}\xi_{j_{p}} $ is different from $\xi_0\text{ }(=A^{-\half}\xi_0)$. Therefore, the constraints for $\zeta_n$ to be non zero are  $i_r=0$ for all $1\leq r\leq l$, $\#\{i_r:l+1\leq r\leq m, i_r\neq 0\}\geq 1$ $($counted with multiplicities$)$ and the expression 
\begin{align*}
c_q(\xi_{i_1})\cdots  c_q(\xi_{i_{l}})c_q(\xi_{i_{l+1}})^*\cdots c_q(\xi_{i_{m}})^*(\xi_0^{\otimes (n -(k-p))} \otimes 
A^{-\half}\xi_{j_1}\otimes \cdots \otimes A^{-\half}\xi_{j_{p}})
\end{align*}
has to lie in $\text{span }\mathcal{E}_{\xi_0}$ $($see Lemma \ref{Intersection} and the discussion preceding it$)$.  

By repeated application of Lemma \ref{SplitAdjoint}, one obtains 
\begin{align}\label{Expanding}
&c_q(\xi_{i_{l+1}})^*\cdots c_q(\xi_{i_{m}})^*\left(\overbrace{\xi_{0}^{\otimes (n -(k-p))}}\otimes \overbrace{(A^{-\half}\xi_{j_1}\otimes\cdots\otimes  A^{-\half}\xi_{j_{p}})}\right)\\
\nonumber=& c_q(\xi_{i_{l+1}})^*\cdots c_q(\xi_{i_{m-1}})^*
\Bigg(\overbrace {(c_q(\xi_{i_{m}})^*\xi_{0}^{\otimes (n -(k-p))})}\otimes \overbrace{(A^{-\half}\xi_{j_1}\otimes\cdots\otimes  A^{-\half}\xi_{j_{p}})}\\
\nonumber&\indent\indent\indent\indent\indent\indent\indent\indent\indent+  q^{(n -(k-p))}\overbrace{\xi_{0}^{\otimes (n -(k-p))}}\otimes 
\overbrace{c_q(\xi_{i_{m}})^* (A^{-\half}\xi_{j_1}\otimes\cdots\otimes  A^{-\half}\xi_{j_{p}})} \Bigg)\\
\nonumber&\vdots\\
\nonumber=&\sum_{r_1=0}^1\cdots \sum_{r_{m-l}=0}^1c_{r_1,\cdots, r_{m-l}}\cdot\\
\nonumber&\indent\indent\indent\Bigg(\prod_{w=1}^{m-l}{\big(c_q(\xi_{i_{l+w}})^*\big)}^{(1-r_{w})}\Bigg)\xi_0^{\otimes (n -(k-p))}
\otimes \Bigg(\prod_{w=1}^{m-l}{\big(c_q(\xi_{i_{l+w}})^*\big)}^{r_{w}}\Bigg)(A^{-\half}\xi_{j_1}\otimes\cdots\otimes  A^{-\half}\xi_{j_{p}}),
\end{align}
where $c_{r_1,\cdots, r_{m-l}}\in\R$ for $(r_1,\cdots, r_{m-l})\in\{0,1\}^{m-l}$ are calculated as follows. 

Given a $(m-l)$-bit string $(r_1,\cdots, r_{m-l})$, let $s_w=\# \text{ of zeros in }\{r_w,r_{w+1},\cdots, r_{m-l}\}$ for $1\leq w\leq m-l$. Then, clearly $s_{m-l}=1-r_{m-l}$ and by induction it follows that $s_{m-l-1}=(1-r_{m-l})+(1-r_{m-l-1})$, $\cdots$, $s_1=(1-r_{m-l})+(1-r_{m-l-1})+\cdots+(1-r_1)$. Thus, repeated application of Lemma \ref{SplitAdjoint} in Eq. \eqref{Expanding} entail that 
\begin{align*}
c_{r_1,\cdots, r_{m-l}}&=q^{(n -(k-p))\Big(\sum_{w=1}^{m-l}r_w\Big)-\sum_{w=1}^{m-l}r_ws_w}\\
\nonumber&=q^{(n -(k-p))\Big(\sum_{w=1}^{m-l}r_w\Big)-\sum_{w=1}^{m-l}r_w\Big((m-l)-w+1-\sum_{w^{\prime}=w}^{m-l}r_{w^{\prime}}\Big)}\\
\nonumber&=q^{\big((n -(k-p))-(m-l)-1\big)\Big(\sum_{w=1}^{m-l}r_w\Big)+\sum_{w=1}^{m-l}wr_w+\sum_{w=1}^{m-l}\Big(\sum_{w^{\prime}=w}^{m-l}r_{w^{\prime}}\Big)r_w}.
\end{align*}
The above formula for $c_{r_1,\cdots, r_{m-l}}$ can be obtained by drawing a binary tree of height $(m-l)$ with weights attached along edges in such a way that it encodes the tensoring on the left or on the right following Lemma \ref{SplitAdjoint}. It is to be noted that the largest power of $q$ that appears in Eq. \eqref{Expanding} is $(n -(k-p))(m-l)$ which appears when $r_w=1$ for all $w$ and the smallest power of $q$ is $0$ and it occurs when $r_w=0$ for all $w$. 

Further, notice that since $\#\{i_r:l+1\leq r\leq m, i_r\neq 0\}\geq 1$, i.e., there is at least one $r_0 $ with $ l+1 \leq r_0 \leq m$ such that $ \xi_{i_{r_0}} \perp \xi_0$ $($in $\langle\cdot,\cdot\rangle_U)$, so  
\begin{align*}
\Big(c_q(\xi_{i_{l+1}})^*\cdots 
c_q(\xi_{i_{m-1}})^* c_q(\xi_{i_{m}})^*\Big)\xi_{0}^{\otimes (n -(k-p))}\otimes \Big(A^{-\half}\xi_{j_1}\otimes\cdots\otimes  A^{-\half}\xi_{j_{p}}\Big) = 0. 
\end{align*}
Therefore, the expression in Eq. \eqref{Expanding} has at most $2^{m-l-1}$ many non zero terms each with scalar coefficients of the form $ q^d$, where $d \geq{((n -(k-p))-(m-l-1))}$. Consequently, by Eq. \eqref{normestimate}, Eq. \eqref{Leftmult}, Eq. \eqref{flip} and Eq. \eqref{NormofZeta}, we conclude that there is a positive constant $K(l,m,p,q)$ independent of $n$ and $N_0\in \N$ such that 
\begin{align*}
\norm{\zeta_n}_q^2 \leq K(l, m, p, q)q^{2n} \bigg(\frac{[n]_{|q|}!}{[n-(k-p)]_{|q|}!}
\sqrt{[n-N_0]_{q}!}\bigg)^2, \text{ for all }n > N_0.
\end{align*}

Define a sequence $\{a_n\}$ of real numbers as follows:
\begin{equation*}
a_n =\begin{cases}
      1, \quad &\text{if }0\leq n\leq N_0,\\
      \frac{1}{[n]_{q}!}{|q|}^{2n}\bigg(\frac{[n]_{|q|}!}{[n-(k-p)]_{|q|}!}
\sqrt{[n-N_0]_{q}!}\bigg)^2,\quad &\text{otherwise}.      
     \end{cases}
\end{equation*}
Note that $\text{lim}_{ n \rightarrow\infty} \frac{a_{n+1}}{a_n}=\abs{q}^2<1$. Consequently, by ratio test $ \sum_{n\geq 1}a_n<\infty$. Since the sequence $\{a_n\}$ eventually dominates the tail of the sequence $\{\frac{1}{[n]_{q!}}\norm{\zeta_n}_q^2\}$ modulo a scalar multiple, the proof is complete.
\end{proof}

Thus, we have the following results.

\begin{Theorem}\label{masa}
Let $t\mapsto U_t$ be a strongly continuous orthogonal representation of $\R$ on a real Hilbert space $\CH_\R$ with $dim(\CH_{\R}) \geq 2$. Let $\xi_0\in \CH_\R $ be a unit vector such that $U_t \xi_0=\xi_0$ for all $t\in \R$. Then,  $M_{\xi_0}$ is a $\varphi$-strongly mixing masa in $M_q$ whose left-right measure is Lebesgue absolutely continuous. 
\end{Theorem}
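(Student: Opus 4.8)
The plan is to combine Theorem \ref{stronglymixingmasa} with the general machinery summarised in the paragraph preceding it, and with Theorem \ref{AllHilbertSchmidt}. First I would observe that by Theorem \ref{eigen-vector}, the hypothesis $U_t\xi_0=\xi_0$ for all $t$ guarantees $s_q(\xi_0)\in M_q^\varphi$, so $M_{\xi_0}\subseteq M_q^\varphi$ is a diffuse abelian subalgebra (diffuseness was recorded in \S\ref{GenMasas}: $M_{\xi_0}\cong L^\infty([-\tfrac{2}{\sqrt{1-q}},\tfrac{2}{\sqrt{1-q}}],\nu_q)$), and the unique $\varphi$-preserving faithful normal conditional expectation $\mathbb{E}_{\xi_0}$ exists. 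The strategy is then to exhibit a set $S$ of elements of $M_q$ with $\mathbb{E}_{\xi_0}(x)=0$ for all $x\in S$, with $\overline{\mathrm{span}}\,S\Omega = \mathcal{F}_q(\CH)\ominus L^2(M_{\xi_0},\varphi)$, and with $T_{x,y^*}$ Hilbert--Schmidt for all $x,y\in S$; by the cited results of \cite{Muk13} and \cite{CFM2} this forces the left-right measure of $M_{\xi_0}$ to be Lebesgue absolutely continuous and $M_{\xi_0}$ to be $\varphi$-strongly mixing, and then Theorem \ref{AllHilbertSchmidt} upgrades this to the masa/singularity conclusion.

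The concrete choice of $S$ is dictated by Theorem \ref{stronglymixingmasa}: take $\mathcal{O}=\{\xi_k\}$ an orthonormal basis of $\CH_\R$ of analytic vectors with $\xi_0\in\mathcal{O}$, as provided by Proposition \ref{analytic}, and let $S$ consist of the operators $x=s_q(\xi_{i_1}\otimes\cdots\otimes\xi_{i_m})$ for which at least one letter $\xi_{i_u}$ differs from $\xi_0$, together with (for the ``$y$'' side of $T_{x,y}$) the operators $y=s_q(A^{-\half}\xi_{j_1}\otimes\cdots\otimes A^{-\half}\xi_{j_k})$ with at least one $\xi_{j_v}\neq\xi_0$; by Lemma \ref{VectorinMq} these indeed lie in $M_q$, by the discussion after Theorem \ref{AllHilbertSchmidt} they are analytic for $(\sigma_t^\varphi)$, and by Lemma \ref{Orthogonality} combined with Lemma \ref{Intersection} the condition ``some letter $\neq\xi_0$'' is exactly equivalent to $\mathbb{E}_{\xi_0}(x)=0$, and also exactly equivalent to the vectors $\xi_{i_1}\otimes\cdots\otimes\xi_{i_m}$ and $A^{-\half}\xi_{j_1}\otimes\cdots\otimes A^{-\half}\xi_{j_k}$ lying in $\mathcal{F}_q(\CH)\ominus L^2(M_{\xi_0},\varphi)$. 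Since $\{\xi_{i_1}\otimes\cdots\otimes\xi_{i_m}:\text{some }\xi_{i_u}\neq\xi_0\}$ spans a dense subspace of $\mathcal{F}_q(\CH)\ominus L^2(M_{\xi_0},\varphi)$, the density requirement on $S\Omega$ is met. Theorem \ref{stronglymixingmasa} supplies precisely the remaining ingredient: $T_{x,y}$ is Hilbert--Schmidt for all such $x,y$.

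To stitch this together I would simply quote: (i) Theorem \ref{stronglymixingmasa} gives $T_{x,y^*}$ Hilbert--Schmidt on the spanning set; (ii) the displayed consequence of \cite[Thm. 2.1]{Muk13} quoted in the text then yields that the left-right measure of $M_{\xi_0}$ is Lebesgue absolutely continuous; (iii) \cite[Thm. 4.4, Rem. 4.5]{CFM2} (or directly Theorem \ref{AllHilbertSchmidt}) gives that $M_{\xi_0}$ is $\varphi$-strongly mixing in $M_q$, hence in particular a singular masa. This is almost entirely an assembly argument once Theorem \ref{stronglymixingmasa} is in hand. The only mild subtlety to be careful about is the bookkeeping that ``$T_{x,y^*}$ Hilbert--Schmidt'' in Theorem \ref{stronglymixingmasa} is stated for $y=s_q(A^{-\half}\xi_{j_1}\otimes\cdots\otimes A^{-\half}\xi_{j_k})$, so one must note that the operators of this form, together with those of the form $s_q(\xi_{i_1}\otimes\cdots\otimes\xi_{i_m})$, generate enough of $M_q$ that their images of $\Omega$ span $\mathcal{F}_q(\CH)\ominus L^2(M_{\xi_0},\varphi)$ — this follows from Lemma \ref{VectorinMq}(1) and Lemma \ref{Intersection}, since $\xi_0^{\otimes n}=H^q_n(s_q(\xi_0))\Omega$ already spans $L^2(M_{\xi_0},\varphi)$ and the remaining simple tensors with a letter $\neq\xi_0$ (which lie in $M_q\Omega$) span its orthocomplement. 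So the main work of the proof is entirely contained in Theorem \ref{stronglymixingmasa}; here there is essentially no obstacle beyond correctly invoking the quoted measure-theoretic and bimodule results.
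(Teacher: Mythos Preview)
Your overall plan is right --- Theorem \ref{stronglymixingmasa} is indeed the engine, and the conclusion does go through the \cite{Muk13}/\cite{CFM2} machinery and Theorem \ref{AllHilbertSchmidt}. But there is a real gap in the assembly that you are glossing over.

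The criterion quoted from \cite{Muk13} requires $T_{x,y^{*}}$ to be Hilbert--Schmidt for $x,y$ ranging over a \emph{single} set $S$ whose images under $\Omega$ span $\mathcal{F}_q(\CH)\ominus L^{2}(M_{\xi_0},\varphi)$. Theorem \ref{stronglymixingmasa}, however, delivers $T_{x,y}$ Hilbert--Schmidt only for an \emph{asymmetric} pairing: $x=s_q(\xi_{i_1}\otimes\cdots\otimes\xi_{i_m})$ built from real letters, and $y=s_q(A^{-\half}\xi_{j_1}\otimes\cdots\otimes A^{-\half}\xi_{j_k})$ built from $A^{-\half}$-twisted letters. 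Neither of your two candidate sets works directly: if $S$ consists of the $s_q(\text{real tensor})$'s, then $y^{*}$ is again $s_q$ of a (reversed) real tensor, and Theorem \ref{stronglymixingmasa} says nothing about $T_{x,y^{*}}$ in that case; taking the union of both families does not help, since the criterion demands the Hilbert--Schmidt property for \emph{all} pairs from $S$, including the real--real ones. Your sentence ``Theorem \ref{stronglymixingmasa} gives $T_{x,y^{*}}$ Hilbert--Schmidt on the spanning set'' is exactly where the argument breaks.

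The paper closes this gap with a modular-theoretic computation: using $\Delta^{1/4}$ and the fact that $s_q(\xi_0)\in M_q^{\varphi}$, one shows that the $L^{2}$-kernel realising $T_{x,y}$ (with $x,y$ as in Theorem \ref{stronglymixingmasa}) is the \emph{same} kernel that realises $T_{z^{*},w}$, where now $z=\sigma_{-i/4}^{\varphi}(s_q(\xi_{i_1}\otimes\cdots\otimes\xi_{i_m}))$ and $w=\sigma_{-i/4}^{\varphi}(s_q(\xi_{j_1}\otimes\cdots\otimes\xi_{j_p}))$ are of the \emph{same} symmetric form. One then checks (via Eq.~\eqref{modulartheory}) that the span of $\{\sigma_{-i/4}^{\varphi}(s_q(\xi_{i_1}\otimes\cdots\otimes\xi_{i_m}))\Omega:\text{some }\xi_{i_j}\neq\xi_0\}$ is still dense in $\mathcal{F}_q(\CH)\ominus L^{2}(M_{\xi_0},\varphi)$, so this symmetric family is the correct $S$. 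You need to supply this step (or an equivalent symmetrisation) before invoking \cite{Muk13}.
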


\begin{proof}
In this proof, we repeatedly use Eq. \eqref{modulartheory}, the right multiplication of elements of $M_{\xi_0}$ from \cite{Fal00} and the fact that the analytic extension of $(\sigma_t^{\varphi})$ is algebraic on the analytic elements of $M_q$. Fix $m,p\in \N$. Note that if $\xi_{i_1},\cdots,\xi_{i_m}\in \mathcal{O}$ and $\xi_{j_1},\cdots,\xi_{j_p}\in\mathcal{O}$, and $x=s_q(\xi_{i_1}\otimes\cdots\otimes\xi_{i_m})$ and $y=s_q(A^{-\frac{1}{2}}\xi_{j_1}\otimes\cdots\otimes A^{-\frac{1}{2}}\xi_{j_p})$ be such that $\mathbb{E}_{\xi_0}(x)=0=\mathbb{E}_{\xi_0}(y)$, then by Thm. \ref{stronglymixingmasa} it follows that $T_{x,y}, T_{x^{*},y}$ are Hilbert-Schmidt operators. Consequently, letting $a=\frac{2}{\sqrt{1-q}}$, there exists $f\in L^{2}(\nu_q\otimes \nu_q)$ such that for all $n,k\geq 0$ one has
\begin{align*}
&\int_{-a}^{a}\int_{-a}^{a} H_n^q(t)H_k^q(s)f(t,s)d\nu_q(t)d\nu_q(s)\\
=&\big\langle s_q(\xi_{i_1}\otimes\cdots\otimes \xi_{i_m})\Omega, H_n^q(s_q(\xi_0))s_q(A^{-\frac{1}{2}}\xi_{j_1}\otimes\cdots\otimes A^{-\frac{1}{2}}\xi_{j_p}) H_k^q(s_q(\xi_0))\Omega\big\rangle_q\\
=&\big\langle s_q(\xi_{i_1}\otimes\cdots\otimes \xi_{i_m})\Omega, H_n^q(s_q(\xi_0))s_q(A^{-\frac{1}{2}}\xi_{j_1}\otimes\cdots\otimes A^{-\frac{1}{2}}\xi_{j_p}) JH_k^q(s_q(\xi_0))J\Omega\big\rangle_q\\
=&\big\langle s_q(\xi_{i_1}\otimes\cdots\otimes \xi_{i_m})\Omega, H_n^q(s_q(\xi_0))JH_k^q(s_q(\xi_0))J s_q(A^{-\frac{1}{2}}\xi_{j_1}\otimes\cdots\otimes A^{-\frac{1}{2}}\xi_{j_p}) \Omega\big\rangle_q\\
=&\big\langle H_n^q(s_q(\xi_0))s_q(\xi_{i_1}\otimes\cdots\otimes \xi_{i_m})H_k^q(s_q(\xi_0))\Omega, s_q(A^{-\frac{1}{2}}\xi_{j_1}\otimes\cdots\otimes A^{-\frac{1}{2}}\xi_{j_p})\Omega\big\rangle_q\\
=&\big\langle H_n^q(s_q(\xi_0))s_q(\xi_{i_1}\otimes\cdots\otimes \xi_{i_m})H_k^q(s_q(\xi_0))\Omega,  \Delta^{\frac{1}{2}}(\xi_{j_1}\otimes\cdots\otimes\xi_{j_p})\big\rangle_q \text{\indent(by Eq. \eqref{modulartheory})}\\
=&\big\langle \Delta^{\frac{1}{4}}\Bigg(H_n^q(s_q(\xi_0))s_q(\xi_{i_1}\otimes\cdots\otimes \xi_{i_m})H_k^q(s_q(\xi_0))\Bigg)\Omega, \Delta^{\frac{1}{4}}(\xi_{j_1}\otimes\cdots\otimes\xi_{j_p})\big \rangle_q\\
=&\big\langle\sigma_{-\frac{i}{4}}^{\varphi}\Bigg(H_n^q(s_q(\xi_0))s_q(\xi_{i_1}\otimes\cdots\otimes \xi_{i_m})H_k^q(s_q(\xi_0))\Bigg)\Omega, \Delta^{\frac{1}{4}}(\xi_{j_1}\otimes\cdots\otimes\xi_{j_p})\big\rangle_q\\
=&\big\langle H_n^q(s_q(\xi_0))\sigma_{-\frac{i}{4}}^{\varphi}\Big(s_q(\xi_{i_1}\otimes\cdots\otimes \xi_{i_m})\Big)H_k^q(s_q(\xi_0))\Omega, \Delta^{\frac{1}{4}}(\xi_{j_1}\otimes\cdots\otimes\xi_{j_p})\big\rangle_q \text{ (as }s_q(\xi_0)\in M_q^{\varphi})\\ 
=&\big\langle\sigma_{-\frac{i}{4}}^{\varphi}\Big(s_q(\xi_{i_1}\otimes\cdots\otimes \xi_{i_m})\Big)\Omega,
H_n^q(s_q(\xi_0))\Big(\sigma_{-\frac{i}{4}}^{\varphi}(s_q(\xi_{j_1}\otimes\cdots\otimes \xi_{j_p})\Big)H_k^q(s_q(\xi_0))\Omega\big\rangle_q.
%
%
%
%
\end{align*}
From the above argument, it follows that $T_{z^{*},w}$ is also a Hilbert-Schmidt operator, where $z=\sigma_{-\frac{i}{4}}^{\varphi}(s_q(\xi_{i_1}\otimes\cdots\otimes \xi_{i_m}))$ and $w=\sigma_{-\frac{i}{4}}^{\varphi}(s_q(\xi_{j_1}\otimes\cdots\otimes \xi_{j_p}))$, as it is an integral operator given by a square integrable kernel. 

Now use the discussion preceding Thm. \ref{stronglymixingmasa}, Eq. \eqref{modulartheory} and the fact that the complex span of 
\begin{align*}
\{\sigma_{-\frac{i}{4}}^{\varphi}(s_q(\xi_{i_1}\otimes\cdots\otimes \xi_{i_m})): \xi_{i_j}\in\mathcal{O}, 1\leq j\leq m, \xi_{i_j}\neq \xi_0 \text{ for at least one }\xi_{i_j}, m\in \N\} 
\end{align*}
is dense in $\mathcal{F}_q(\CH)\ominus L^{2}(M_{\xi_0},\varphi)$ to conclude that the left-right measure of $M_{\xi_0}$ is Lebesgue absolutely continuous. The rest is immediate from Thm. \ref{AllHilbertSchmidt}.
\end{proof}

The results of this section obtained so far can thus be summarized as follows. 

\begin{Corollary}\label{Corollary}
Let $t\mapsto U_t$ be a strongly continuous orthogonal representation of $\R$ on a real Hilbert space $\CH_\R$ with $dim(\CH_{\R}) \geq 2$. Let $\xi_0 \in \CH_\R$ be a unit vector. Then the following are equivalent:
\begin{enumerate}
\item $s_q(\xi_0) \in M_q^{\varphi}$$;$
\item $U_t \xi_0 = \xi_0$, for all $ t\in \R$$;$
\item there exists a faithful normal conditional expectation  $\mathbb{E}_{\xi_0}: M_q \rightarrow M_{\xi_0}$ such that $\varphi(\mathbb{E}_{\xi_0} ( x)) = \varphi(x)$ for all $x \in M_q$$;$
\item $M_{\xi_0}$ is a $\varphi$-strongly mixing masa in $M_q$.
\end{enumerate}
\end{Corollary}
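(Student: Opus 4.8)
The plan is to consolidate the results already established in this section; the corollary is essentially a bookkeeping statement, so no genuinely new argument is needed. First I would record the equivalence of (1), (2) and (3): this is precisely Theorem \ref{eigen-vector}, which asserts that a $\varphi$-preserving faithful normal conditional expectation $\mathbb{E}_{\xi_0}\colon M_q\to M_{\xi_0}$ exists if and only if $s_q(\xi_0)\in M_q^{\varphi}$, and that the latter is equivalent to $U_t\xi_0=\xi_0$ for all $t\in\R$. Here one also invokes the standing fact from \S\ref{GenMasas} that $M_{\xi_0}\cong L^\infty([-\frac{2}{\sqrt{1-q}},\frac{2}{\sqrt{1-q}}],\nu_q)$ is diffuse for \emph{every} unit vector $\xi_0\in\CH_\R$, independently of $(U_t)$, so that $M_{\xi_0}$ is a legitimate candidate for a diffuse masa in each clause.

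Next I would prove (2)$\Rightarrow$(4): this is exactly the content of Theorem \ref{masa}, which shows that when $\xi_0$ is a unit $(U_t)$-fixed vector and $dim(\CH_\R)\geq 2$, the algebra $M_{\xi_0}$ is a $\varphi$-strongly mixing masa in $M_q$ whose left--right measure is Lebesgue absolutely continuous (strong mixing following from absolute continuity via Theorem \ref{AllHilbertSchmidt}). So this arrow is a citation.

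Finally, to close the cycle I would argue (4)$\Rightarrow$(1). The key observation is definitional: in \S\ref{strongmixing} the notion of $\varphi$-strong mixing (Definition \ref{sm}) is only set up for a diffuse abelian von Neumann subalgebra $A$ that is contained in the centralizer $M^{\varphi}$ — this containment is precisely what makes $\mathbb{E}_A$ available, the right action $J_\varphi u J_\varphi$, $u\in\mathcal{U}(A)$, analytic, and the whole bimodule apparatus applicable. Hence clause (4) carries within it the hypothesis $M_{\xi_0}\subseteq M_q^{\varphi}$, from which $s_q(\xi_0)\in M_{\xi_0}\subseteq M_q^{\varphi}$, i.e.\ (1); equally, since $\varphi$-strong mixing presupposes the existence of $\mathbb{E}_{\xi_0}$, one may read off (4)$\Rightarrow$(3) just as directly.

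I do not anticipate any real obstacle here, since every implication is either Theorem \ref{eigen-vector}, Theorem \ref{masa}, or an unpacking of Definition \ref{sm}. The only point demanding a line of care is making explicit that (4) is not a vacuous or weaker statement than (1)--(3): one must spell out that ``$\varphi$-strongly mixing masa'' already entails $M_{\xi_0}\subseteq M_q^{\varphi}$, so that the return arrow to (1) is honest. With that convention in place the proof is a short paragraph.
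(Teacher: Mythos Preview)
Your proposal is correct and follows essentially the same approach as the paper, which simply cites Theorem~\ref{eigen-vector} and Theorem~\ref{masa} in a single line. You are more explicit than the paper in closing the loop $(4)\Rightarrow(1)$ by unpacking that Definition~\ref{sm} is only formulated for diffuse abelian subalgebras already contained in $M_q^{\varphi}$; this is a helpful clarification but not a different argument.
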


\begin{proof}
The stated conditions in the statement are equivalent from Thm. \ref{eigen-vector} and Thm. \ref{masa}.
\end{proof}

Hiai proved that if the almost periodic part of the orthogonal representation is infinite dimensional, then the centralizer 
$M_q^{\varphi}$ has trivial relative communtant, i.e., $(M_q^{\varphi})^{\prime}\cap M_q =\C1$ $($Thm. 3.2 \cite{Hiai}$)$. Now we show that the same result is true under a weaker hypothesis as well.

\begin{Corollary}\label{Trivialrelcommutant}
Let $t\mapsto U_t$ be a strongly continuous orthogonal representation of $\R$ on a real Hilbert space $\CH_\R$ with $dim(\CH_{\R}) \geq 2$. Suppose there exist unit vectors $\xi_i\in\CH_\R$ such that $U_t\xi_i=\xi_i$, $i=1,2$, for all $t\in\R$, and $\langle \xi_1,\xi_2\rangle_U=0$. Then, $(M_q^{\varphi})^{\prime}\cap M_q =\C1$. 
\end{Corollary}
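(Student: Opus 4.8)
The plan is to leverage the heavy lifting already done in Theorem \ref{masa}: for a unit vector $\xi$ with $U_t\xi=\xi$, the algebra $M_\xi$ is a masa in $M_q$, and by Corollary \ref{Corollary} it satisfies $s_q(\xi)\in M_q^\varphi$, hence $M_\xi=vN(s_q(\xi))\subseteq M_q^\varphi$. The elementary observation I would start from is that whenever $B\subseteq M_q^\varphi$ is a masa in $M_q$, one has $(M_q^\varphi)'\cap M_q\subseteq B'\cap M_q=B$. Applying this with $B=M_{\xi_1}$ and with $B=M_{\xi_2}$ gives at once
\[
(M_q^\varphi)'\cap M_q\ \subseteq\ M_{\xi_1}\cap M_{\xi_2},
\]
so the whole problem reduces to showing $M_{\xi_1}\cap M_{\xi_2}=\C1$.

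For this I would pass to the GNS space and use the hypothesis $\langle\xi_1,\xi_2\rangle_U=0$. Let $x\in M_{\xi_1}\cap M_{\xi_2}$. Then $x\Omega\in\overline{M_{\xi_1}\Omega}^{\norm{\cdot}_q}\cap\overline{M_{\xi_2}\Omega}^{\norm{\cdot}_q}$, so by Lemma \ref{Intersection}$(2)$ we may write $x\Omega=\sum_{n\geq 0}a_n\xi_1^{\otimes n}=\sum_{n\geq 0}b_n\xi_2^{\otimes n}$, both series converging in $\norm{\cdot}_q$. Since $\xi_i^{\otimes n}\in\CH^{\otimes_q n}$ and $\CF_q(\CH)=\bigoplus_{n\geq 0}\CH^{\otimes_q n}$ orthogonally, comparing the $n$-particle components forces $a_n\xi_1^{\otimes n}=b_n\xi_2^{\otimes n}$ for every $n$. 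Now $\langle\xi_1,\xi_2\rangle_U=0$ gives $\langle\xi_1,\xi_2\rangle_{\CH_\C}=0$ by Lemma \ref{Orthogonality}$(1)$, and hence $\langle\xi_1^{\otimes n},\xi_2^{\otimes n}\rangle_q=0$ for all $n\geq 1$ (Lemma \ref{Orthogonality}$(2)$, or directly Eq. \eqref{qFock} together with $\sum_{\pi\in S_n}q^{i(\pi)}=[n]_q!$). Taking the inner product of $a_n\xi_1^{\otimes n}=b_n\xi_2^{\otimes n}$ with $\xi_1^{\otimes n}$ and using $\norm{\xi_1^{\otimes n}}_q^2=[n]_q!\neq 0$ (Eq. \eqref{Normelt}) yields $a_n=0$ for every $n\geq 1$. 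Thus $x\Omega=a_0\Omega\in\C\Omega$, and since $\Omega$ is separating for $M_q$ we conclude $x=a_0 1$, i.e. $(M_q^\varphi)'\cap M_q=\C1$.

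As for difficulty: there is no genuine obstacle here once Theorem \ref{masa} is in hand — the substance (that each $M_{\xi_i}$ is a masa lying in $M_q^\varphi$) is already established. The only points that require care are the orientation of the relative-commutant inclusion (which is why verifying $M_{\xi_i}\subseteq M_q^\varphi$ via Corollary \ref{Corollary} matters) and the uniqueness of the $n$-particle decomposition on $\CF_q(\CH)$; both are routine bookkeeping.
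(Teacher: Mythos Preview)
Your proof is correct and follows essentially the same approach as the paper: both use Theorem~\ref{masa} (via Theorem~\ref{eigen-vector}) to place each $M_{\xi_i}$ as a masa inside $M_q^\varphi$, deduce $(M_q^\varphi)'\cap M_q\subseteq M_{\xi_1}\cap M_{\xi_2}$, and then argue from Lemma~\ref{Intersection} and the $q$-Fock orthogonality that $\overline{\text{span }\mathcal{E}_{\xi_1}}\cap\overline{\text{span }\mathcal{E}_{\xi_2}}=\C\Omega$. Your write-up is slightly more detailed in unpacking the $n$-particle comparison, but the argument is the same.
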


\begin{proof}
By Thm. \ref{eigen-vector} and Thm. \ref{masa}, it follows that $M_{\xi_i}\subseteq M_q^{\varphi}$ is a masa in $M_q$ for $i=1,2$. Let $x\in (M_q^{\varphi})^{\prime}\cap M_q$. Then $x\in M_{\xi_1}\cap M_{\xi_2}$ and hence $x\Omega\in \overline{span\text{ }\mathcal{E}_{\xi_1}}^{\norm{\cdot}_q}\cap \overline{span\text{ }\mathcal{E}_{\xi_2}}^{\norm{\cdot}_q}$ from Lemma \ref{Intersection}. But from Eq. \eqref{qFock}, it follows that $\overline{span\text{ }\mathcal{E}_{\xi_1}}^{\norm{\cdot}_q}\cap \overline{span\text{ }\mathcal{E}_{\xi_2}}^{\norm{\cdot}_q}=\C\Omega$. As $\Omega$ is a separating vector for $M_q$ the result follows.
\end{proof}

\begin{Remark}
The hypothesis of Cor. \ref{Trivialrelcommutant} actually forces $M_q$ to be a factor but we will establish the factoriality of $M_q$ under a weaker hypothesis.  
\end{Remark}

\section{Factoriality}\label{Factoriality}

In this section, we extend the previous efforts to decide the factoriality of $M_q$. We establish that $M_q$ is a factor when $dim(\CH_\R)\geq 2$ and $(U_t)$ is not ergodic or has a nontrivial weakly mixing component.

Our approach to prove factoriality is fundamentally along the lines of \'{E}ric Ricard \cite{ER}. As discussed in the introduction, our approach is to use ideas coming from Ergodic theory, namely, strong mixing, as seen in the previous section. Our idea stems from the following observation. If a finite von Neumann algebra contains a diffuse masa for which the orthocomplement of the associated Jones' projection is a coarse bimodule, then the von Neumann algebra must be a factor \cite{CFM2}. But for the masa $M_{\xi_0}$ $($in \S\ref{strongmixing}$)$, instead of showing that the orthocomplement of the Jones' projection is a coarse bimodule over $M_{\xi_0}$, we only settled with absolute continuity in Thm. \ref{stronglymixingmasa} and Thm. \ref{masa} to avoid cumbersome calculations. In this section, we use the fact that $M_{\xi_0}$ is a masa in $M_q$ as obtained in the previous section, to decide factoriality of $M_q$ in the case when $(U_t)$ has a non trivial fixed vector.

The arguments needed to prove factoriality of $M_q$ is divided into two cases, one dealing with the discrete part of the spectrum of $A$ corresponding to the eigen value 1 and the other dealing with the continuous part of the spectrum.

\begin{Theorem}\label{wmixingfactor}
Let $t\mapsto U_t$ be a strongly continuous orthogonal representation of $\R$ on a real Hilbert space $\CH_\R$. 
Suppose that the invariant subspace of weakly mixing vectors in $\CH_\R$ is non trivial. Then $M_q$ is a factor.
\end{Theorem}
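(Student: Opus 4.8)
\textbf{Plan of proof.} The goal is to show that $\CZ(M_q) = \C 1$ when the weakly mixing part $(\widetilde{\CH}_\R, \widetilde{U}_t)$ of the orthogonal representation is non-trivial. The key structural fact I would exploit is the one highlighted in the introduction: if $z\in\CZ(M_q)\subseteq M_q^\varphi$, then $z$ commutes with every unitary in $M_q$ and in particular with $J$, so $z\Omega$ is fixed by the modular flow $\CF(U_t)$ and hence lies in the $\CF(U_t)$-invariant part of $\CF_q(\CH)$; moreover $s_q(z\Omega)=d_q(z\Omega)$ by Lemma \ref{fixed}. The heart of the argument is therefore to show that if a vector $\zeta\in M_q\Omega\cap M_q'\Omega$ satisfies $s_q(\zeta)=d_q(\zeta)$ and is fixed by the modular flow, then $\zeta\in\C\Omega$, under the hypothesis that $\widetilde{\CH}_\R\neq 0$.

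\textbf{Step 1 (reduce to level $n$).} Write $\zeta=z\Omega=\sum_{n\geq 0}\zeta_n$ with $\zeta_n\in\CH^{\otimes_q n}$. As in the proof of Thm.\ \ref{CentraliserDescribe}, $\CF(U_t)\zeta_n=\zeta_n$ for all $t$ and all $n$, so each $\zeta_n$ lies in the fixed space of $U_t^{\otimes_q n}$, i.e.\ (using Prop.\ \ref{Eigenvalueinamplification} and Prop.\ \ref{Eigenvector}) $\zeta_n$ is a $\norm{\cdot}_q$-limit of simple tensors $\eta_1\otimes\cdots\otimes\eta_n$ with each $\eta_i$ an eigenvector of $A$ and the product of eigenvalues equal to $1$. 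Since the only eigenvectors live in $\mathrm{span}\,\CS$ and since $\widetilde{\CH}_\R$ carries no eigenvectors of $A$, every such simple tensor has all legs in $\mathrm{span}\,\CS\subseteq (\CH_{\mathrm{ap}})$ — in particular \emph{none} of the legs meets $\widetilde{\CH}_\R$. The plan is to pick a unit vector $\eta\in\widetilde{\CH}_\R\subseteq\CH_\R$ and test the identity $s_q(\zeta)\eta^{\otimes k}=d_q(\zeta)\eta^{\otimes k}$ on the corresponding vacuum tower, exactly as in the weakly-mixing regime we will need $\langle\eta,\zeta_{i}\rangle_U=0$ for every leg $\zeta_i$ appearing above, because $\widetilde{\CH}_\R\perp\CH_{\mathrm{ap}}$ in $\langle\cdot,\cdot\rangle_U$.

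\textbf{Step 2 (Wick action and the key cancellation).} Fix such an $\eta$, orthogonal in $\langle\cdot,\cdot\rangle_U$ to every leg occurring in every $\zeta_n$. On one hand, by Lemma \ref{Wickformula} (applied leg-by-leg and extended by $\norm{\cdot}_q$-continuity using Lemma \ref{Equivalent} and the estimate \eqref{normestimate}), $s_q(\zeta_n)\,\eta^{\otimes k}=\zeta_n\otimes\eta^{\otimes k}$ for all $k\geq 0$; hence $s_q(\zeta)\eta^{\otimes k}=\sum_n \zeta_n\otimes\eta^{\otimes k}$. On the other hand, since $\eta\in\CH_\R\cap\mathfrak D(A^{-1/2})$ with $A^{-1/2}\eta$ still orthogonal to the legs (the legs being eigenvectors, $A^{-1/2}$ is the identity on them up to scalars, so the orthogonality in $\langle\cdot,\cdot\rangle_U$ is preserved — cf.\ Eq.\ \eqref{A-Inner-1}), the right analogue of Lemma \ref{Wickformula} (using $r_q,r_q^*$, Eq.\ \eqref{Rightmult}, \eqref{Rightmultadj}, and Lemma \ref{SplitAdjoint}) gives $d_q(\zeta_n)\,\eta^{\otimes k}=\eta^{\otimes k}\otimes\zeta_n$ up to an overall power $q^{(\cdot)}$ bookkeeping factor; more precisely one obtains $d_q(\zeta)\eta^{\otimes k}$ as a sum whose leading term in the tensor grading is $\sum_n \eta^{\otimes k}\otimes\zeta_n$. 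Comparing $\norm{\cdot}_q$-orthogonal components in the identity $s_q(\zeta)\eta^{\otimes k}=d_q(\zeta)\eta^{\otimes k}$: for $k\geq 1$ the vector $\zeta_n\otimes\eta^{\otimes k}$ (legs of $\zeta_n$ then $\eta$'s) is, using Eq.\ \eqref{qFock} and the $\langle\cdot,\cdot\rangle_U$-orthogonality of $\eta$ to all legs, $\norm{\cdot}_q$-orthogonal to $\eta^{\otimes k}\otimes\zeta_n$ whenever $n\geq 1$ — because no permutation in $S_{n+k}$ can match an $\eta$-slot with a leg-slot without producing a zero inner-product factor, and the surviving permutations pair $\eta$'s with $\eta$'s and legs with legs, forcing the two orderings to disagree once $n\geq 1$. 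This orthogonality, combined with $\langle\zeta_n\otimes\eta^{\otimes k},\zeta_n\otimes\eta^{\otimes k}\rangle_q=[\,\cdots]_q!\,\norm{\zeta_n}_q^2\cdot(\text{positive})$, forces $\zeta_n=0$ for all $n\geq 1$. Hence $\zeta=\zeta_0\in\C\Omega$, so $z\in\C 1$ and $M_q$ is a factor.

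\textbf{Main obstacle.} The delicate point is Step 2: controlling the \emph{right} creation/annihilation action $d_q(\zeta_n)$ on the tower $\eta^{\otimes k}$ and isolating a genuinely non-zero, $\norm{\cdot}_q$-orthogonal component that survives the $q$-deformed inner product. Unlike the $q=0$ case where tensor slots are literally orthogonal, here one must argue that permutations in $S_{n+k}$ mixing $\eta$-slots with leg-slots contribute $0$ (via $\langle\eta,\cdot\rangle_U=0$) while the remaining permutations cannot reconcile the two different orderings of the $n$ legs versus the $k$ copies of $\eta$; a clean way is to invoke Lemma \ref{Orthogonality}-type reasoning adapted to the weakly-mixing leg $\eta$ in place of the fixed vector $\xi_0$, i.e.\ to prove an auxiliary orthogonality lemma stating that for a unit vector $\eta$ with $\langle\eta,\zeta_i\rangle_U=0$ one has $\langle\zeta_{\sigma(1)}\otimes\cdots\otimes\eta^{\otimes k},\ \eta^{\otimes k}\otimes\cdots\rangle_q=0$ for $n\geq 1$. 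Establishing this auxiliary lemma — essentially a block-structure statement about the $q$-Gram matrix when one block of legs is $\langle\cdot,\cdot\rangle_U$-orthogonal to the other — is where the real work lies, but it is of the same flavour as Lemma \ref{Orthogonality} and the computations already carried out in Thm.\ \ref{stronglymixingmasa}.
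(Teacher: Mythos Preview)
Your overall strategy --- $z\in\CZ(M_q)\subseteq M_q^\varphi$, so $z\Omega$ lives entirely in the almost-periodic part of the Fock space, and then testing against a weakly mixing vector $\eta$ --- is exactly the paper's idea. But your execution has two genuine gaps.

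\textbf{First gap.} You write $s_q(\zeta_n)\eta^{\otimes k}$ and $d_q(\zeta_n)\eta^{\otimes k}$, but the level-$n$ component $\zeta_n$ of $z\Omega$ is only a $\norm{\cdot}_q$-limit of simple tensors from $\CW_0$; there is no reason for $\zeta_n$ to lie in $M_q\Omega$ or $M_q'\Omega$, so these operators are simply undefined. Lemma~\ref{Wickformula} gives $s_q(\omega)\eta^{\otimes k}=\omega\otimes\eta^{\otimes k}$ for $\omega\in\CW_0$, and taking $\norm{\cdot}_q$-limits shows that the \emph{vector-valued} map $\omega\mapsto\omega\otimes\eta^{\otimes k}$ extends continuously --- but this says nothing about the operator $z$ acting on $\eta^{\otimes k}$. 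The fix is to reverse the roles: pick an \emph{analytic} unit vector $\xi$ in the weakly mixing subspace (Prop.~\ref{analytic}) and apply the fixed bounded operator $s_q(\xi)$ to the approximants of $z\Omega$. This gives $s_q(\xi)z\Omega=\xi\otimes z\Omega$ directly (via boundedness of $c_q(\xi)$ and vanishing of $c_q(\xi)^*$ on $\CW$). For the other side, analyticity of $\xi$ makes right multiplication by $s_q(\xi)$ bounded on $M_q\Omega$, so $zs_q(\xi)\Omega=\lim_j s_q(\omega_j)\xi=\lim_j \omega_j\otimes\xi=z\Omega\otimes\xi$ (via Lemma~\ref{Wickformula} and boundedness of $r_q(\xi)$). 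This is precisely what the paper does, with $k=1$; the tower $\eta^{\otimes k}$ is unnecessary.

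\textbf{Second gap.} The $\norm{\cdot}_q$-orthogonality you assert in Step~2 is \emph{false} for $q\neq 0$. A direct computation with Eq.~\eqref{qFock} (the only surviving permutations are those swapping the two blocks) gives
\[
\langle \zeta_n\otimes\eta^{\otimes k},\,\eta^{\otimes k}\otimes\zeta_n\rangle_q \;=\; q^{nk}\,\norm{\zeta_n}_q^2\,[k]_q!,
\]
which is nonzero for $n,k\geq 1$ and $\zeta_n\neq 0$. The auxiliary ``block-Gram'' lemma you propose therefore cannot hold. What you actually need is much weaker and immediate: combining the above with $\norm{\zeta_n\otimes\eta^{\otimes k}}_q^2=\norm{\eta^{\otimes k}\otimes\zeta_n}_q^2=\norm{\zeta_n}_q^2\,[k]_q!$ yields
\[
\norm{\zeta_n\otimes\eta^{\otimes k}-\eta^{\otimes k}\otimes\zeta_n}_q^2 \;=\; 2(1-q^{nk})\,\norm{\zeta_n}_q^2\,[k]_q! \;>\;0
\]
for $n\geq 1$. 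So the two vectors are \emph{distinct}, not orthogonal, and that is all that is required to force $\zeta_n=0$. (Your remark that the $d_q$-side produces $\eta^{\otimes k}\otimes\zeta_n$ only ``up to an overall power $q^{(\cdot)}$'' is also wrong: once set up correctly there is no extra factor.)

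In short, once both gaps are closed your argument collapses to the paper's: test $[s_q(\xi),z]\Omega=\xi\otimes z\Omega - z\Omega\otimes\xi$ for a single analytic weakly mixing $\xi$, and observe that this vanishes iff $z\Omega\in\C\Omega$.
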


\begin{proof}
Decompose $\CH_\R=\CH_c\oplus \CH_{wm}$ $($direct sum taken in $\langle\cdot,\cdot\rangle_{\CH_\C})$, where $\CH_c$ and $\CH_{wm}$ are closed invariant subspaces of the orthogonal representation consisting of compact and weakly mixing vectors respectively.  First of all note that $\CH_\R$ is infinite dimensional as $\CH_{wm}\neq 0$. If $\CH_c=0$, then by Eq. \eqref{modularaut} and Thm. \ref{CentraliserDescribe}, $(\sigma_t^{\varphi})$ acts ergodically on $M_q$. Consequently, $M_q$ is a $\rm{III}_1$ factor \cite{Ta2}. 

Let $\CH_c\neq 0$. Then $M_q^{\varphi}$ is non trivial from Thm. \ref{CentraliserDescribe}. Let $\xi\in \CH_{wm}$ be a unit analytic vector $($see Prop. \ref{analytic}$)$. Note that $\mathcal{Z}(M_q)\subseteq M_q^{\varphi}$. Borrowing notations from Thm. \ref{CentraliserDescribe} and the discussion in \S\ref{SectionCentralizer} preceding it, we have the following. For $\zeta_{i_j}\in \CS$, $1\leq i_j\leq N_1+2N_2$ $($or $1\leq i_j< N_1+2N_2$ as the case may be$)$ for $1\leq j\leq n$ and $\prod_{j=1}^n \beta_{i_j}=1$, note that $\zeta_{i_1}\otimes\cdots\otimes\zeta_{i_n}\in M_q^{\varphi}\Omega$. Note that the real and imaginary parts of $\zeta_{i_j}$ are analytic and individually orthogonal to $\xi$ with respect to $\langle\cdot,\cdot,\rangle_{U}$ and $\langle\cdot,\cdot\rangle_{\CH_\C}$ for all $1\leq j\leq n$. Then, decomposing vectors into real and imaginary parts and using  Eq. \eqref{Leftmult} and Lemma \ref{Wickformula}, it follows that $s_q(\xi)s_q(\zeta_{i_1}\otimes\cdots\otimes\zeta_{i_n})\Omega=\xi\otimes \zeta_{i_1}\otimes\cdots\otimes\zeta_{i_n}$,  while $s_q(\zeta_{i_1}\otimes\cdots\otimes\zeta_{i_n})s_q(\xi)\Omega=\zeta_{i_1}\otimes\cdots\otimes\zeta_{i_n}\otimes\xi$. This observation forces that if $a\in\mathcal{Z}(M_q)$, then $s_q(\xi)a\Omega =\xi\otimes a\Omega$, while $as_q(\xi)\Omega = a\Omega\otimes\xi$ using the  definition of right multiplication from \cite{Fal00} and the fact that $c_q(\xi),r_q(\xi)$ are  bounded. Thus, $s_q(\xi)$ cannot commute with $a$ unless $a$ is a scalar multiple of $1$, as $\Omega$ is a separating vector for $M_q$.  This completes the argument.
\end{proof}

\begin{Theorem}\label{Factor1eigenvalue}
Let $\CH_\R$ be a real Hilbert space with $dim(\CH_\R)\geq 2$. Let $t\mapsto U_t$ be a strongly continuous orthogonal representation of $\R$ on $\CH_\R$. Suppose there exists a unit vector $\xi_0\in\CH_\R$ such that  $U_t\xi_0=\xi_0$ for all $t\in \CH_\R$. Then $M_q$ is a factor.
\end{Theorem}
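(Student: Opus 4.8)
The plan is to prove directly that $\mathcal{Z}(M_q)=\C 1$, using nothing more than the fact, established in Thm.~\ref{masa}, that $M_{\xi_0}$ is a masa in $M_q$ (the finer information about its left-right measure is not needed for factoriality). The argument is uniform in the orthogonal representation: no case split into weakly mixing / almost periodic parts is required. Since $M_{\xi_0}$ is maximal abelian, $\mathcal{Z}(M_q)\subseteq M_{\xi_0}'\cap M_q=M_{\xi_0}$, so any $a\in\mathcal{Z}(M_q)$ satisfies $a\Omega\in\overline{M_{\xi_0}\Omega}^{\,\|\cdot\|_q}=\overline{\mathrm{span}\,\mathcal{E}_{\xi_0}}^{\,\|\cdot\|_q}$ by Lemma~\ref{Intersection}; hence $a\Omega=\sum_{n\geq 0}c_n\,\xi_0^{\otimes n}$ with convergence in $\|\cdot\|_q$.

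The next step is to choose a good test vector. Because $\dim(\CH_\R)\geq 2$, I would pick $\xi_1\in\CH_\R$ with $\langle\xi_1,\xi_0\rangle_{\CH_\C}=0$; since $A\xi_0=\xi_0$ gives $\frac{2}{1+A^{-1}}\xi_0=\xi_0$, self-adjointness of $\frac{2}{1+A^{-1}}$ on $\CH_\C$ forces $\langle\xi_1,\xi_0\rangle_U=0$ as well, and after rescaling by the positive real $\|\xi_1\|_U^{-1}$ I may assume $\xi:=\|\xi_1\|_U^{-1}\xi_1\in\CH_\R$ has $\|\xi\|_U=1$ and $\langle\xi,\xi_0\rangle_U=\langle\xi_0,\xi\rangle_U=0$. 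Two computations with \eqref{Leftmult} will then be used, both exploiting this orthogonality: first, since $\xi_0^{\otimes n}=H_n^q(s_q(\xi_0))\Omega$, Lemma~\ref{Wickformula} gives $H_n^q(s_q(\xi_0))\,\xi=s_q(\xi_0^{\otimes n})\,\xi=\xi_0^{\otimes n}\otimes\xi$ for all $n\geq 0$; second, $s_q(\xi)\,\xi_0^{\otimes n}=c_q(\xi)\xi_0^{\otimes n}+c_q(\xi)^*\xi_0^{\otimes n}=\xi\otimes\xi_0^{\otimes n}$ for all $n\geq 0$.

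Now I would evaluate $a\xi$ two ways. On one hand, transporting the $M_{\xi_0}$-action on $\overline{M_{\xi_0}\Omega}^{\,\|\cdot\|_q}$ to the $M_{\xi_0}$-invariant subspace $\overline{\mathrm{span}\{\xi_0^{\otimes n}\otimes\xi:n\geq 0\}}$ via the isometry $\zeta\mapsto\zeta\otimes\xi$ (it is an isometry because $\|\xi_0^{\otimes n}\otimes\xi\|_q^2=[n]_q!=\|\xi_0^{\otimes n}\|_q^2$ by \eqref{qFock}, and it intertwines $s_q(\xi_0)$ by the first computation above applied to $H_1^q$), one gets $a\xi=\sum_n c_n\,\xi_0^{\otimes n}\otimes\xi$. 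On the other hand, $a$ is central, so $a\,s_q(\xi)=s_q(\xi)\,a$; applying this to $\Omega$ and using the second computation gives $a\xi=s_q(\xi)\,a\Omega=\sum_n c_n\,\xi\otimes\xi_0^{\otimes n}$. Equating and projecting onto the homogeneous component $\CH^{\otimes_q(m+1)}$ of $\CF_q(\CH)$ yields $c_m\,\xi_0^{\otimes m}\otimes\xi=c_m\,\xi\otimes\xi_0^{\otimes m}$ for every $m$. A short computation with \eqref{qFock}, using $\langle\xi_0,\xi\rangle_U=0$ and $\|\xi_0\|_U=\|\xi\|_U=1$, gives $\|\xi_0^{\otimes m}\otimes\xi-\xi\otimes\xi_0^{\otimes m}\|_q^2=2\,[m]_q!\,(1-q^m)$, which is strictly positive for all $m\geq 1$ since $|q|<1$. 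Hence $c_m=0$ for $m\geq 1$, so $a\Omega=c_0\Omega$, and as $\Omega$ separates $M_q$ we conclude $a=c_0\,1$; thus $\mathcal{Z}(M_q)=\C 1$ and $M_q$ is a factor.

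I expect the only genuinely delicate point to be the identity $a\xi=\sum_n c_n\,\xi_0^{\otimes n}\otimes\xi$. It relies on two things: that $a$ belongs to the masa $M_{\xi_0}$ — which is exactly where Thm.~\ref{masa} is indispensable — and that the action of $M_{\xi_0}$ on $L^2(M_{\xi_0})$ can be transported faithfully to the subspace $\overline{\mathrm{span}\{\xi_0^{\otimes n}\otimes\xi\}}$ through the $q$-Hermite/Wick calculus, which in turn needs $\langle\xi_0,\xi\rangle_U=0$, i.e.\ the availability of a test vector orthogonal to $\xi_0$; this is the sole place $\dim(\CH_\R)\geq 2$ enters. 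The point of the whole argument is that appending $\xi$ \emph{after} $\xi_0^{\otimes n}$ (which is how $a$ acts) disagrees with prepending $\xi$ \emph{before} $\xi_0^{\otimes n}$ (which is how $s_q(\xi)$ acts on $a\Omega$), and for $|q|<1$ this disagreement is nondegenerate at every degree $\geq 1$; everything else is bookkeeping with \eqref{Leftmult} and \eqref{qFock}.
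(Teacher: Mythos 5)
Your proposal is correct and follows essentially the same route as the paper: it uses Thm.~\ref{masa} to place $\mathcal{Z}(M_q)$ inside the masa $M_{\xi_0}$, expands $a\Omega$ in the basis $\{\xi_0^{\otimes n}\}$, and compares the left action of an orthogonal test vector $\xi$ with the right-hand placement coming from the $M_{\xi_0}$-action, exactly as the paper compares $s_q(\xi_1)x\Omega$ with $xs_q(\xi_1)\Omega$. The only (harmless) difference is cosmetic: you obtain $H^q_n(s_q(\xi_0))\xi=\xi_0^{\otimes n}\otimes\xi$ via Lemma~\ref{Wickformula} (the alternative the paper itself mentions) and make the final step quantitative through the norm identity $\norm{\xi_0^{\otimes m}\otimes\xi-\xi\otimes\xi_0^{\otimes m}}_q^2=2[m]_q!(1-q^m)>0$, where the paper instead argues by containment in $\overline{\operatorname{span}}\{\xi_0^{\otimes n}\otimes\xi_1\}$.
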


\begin{proof}
Let $x\in \mathcal{Z}(M_q)$. We will show that $x$ is a scalar multiple of $1$. By Thm. \ref{masa}, $M_{\xi_0}\subseteq M_q$ is a diffuse masa with a unique $\varphi$-preserving faithful normal conditional expectation. Thus, $\mathcal{Z}(M_q)\subseteq M_{\xi_0}$ and hence $x\in M_{\xi_0}$. As seen in the proof of Lemma \ref{Intersection}, $H_n^q(s_q(\xi_0))\Omega=\xi_0^{\otimes n}$ for all $n\geq 0$. Consequently, $x\Omega\in \overline{\text{span }\mathcal{E}_{\xi_0}}^{\norm{\cdot}_{q}}$ from Lemma \ref{Intersection} and hence 
\begin{align*}
x\Omega=\sum_{n=0}^{\infty} a_n\xi_{0}^{\otimes n}=\sum_{n=0}^{\infty} a_nH_n^q(s_q(\xi_0))\Omega, \text{  }a_n\in\C,
\end{align*}
where the series converges in $\norm{\cdot}_q$. 

Since $dim(\CH_\R)\geq 2$, so there exists an analytic vector $\xi_1\in \CH_\R$ $($see Prop. \ref{analytic}$)$ such that $\langle \xi_0,\xi_1\rangle_{\CH_\C}=0$. Hence, from Eq. \eqref{Leftmult} and Lemma \ref{Orthogonality} it follows that 
\begin{align*}
s_q(\xi_1)x\Omega &= \sum_{n=0}^{\infty} a_n s_q(\xi_1)H_n^q(s_q(\xi_0))\Omega\\
&= \sum_{n=0}^{\infty} a_n s_q(\xi_1)\xi_{0}^{\otimes n}=\sum_{n=0}^{\infty} a_n (\xi_1\otimes \xi_{0}^{\otimes n}).
\end{align*}
Again, from Eq. \eqref{Leftmult}, $H_n^q(s_q(\xi_0))s_q(\xi_1)\Omega=\xi_{0}^{\otimes n}\otimes \xi_1$ for all $n\geq 0$. To see this, we use induction. For $n=0$, the conclusion is obvious, and for $n=1$ the same follows from Lemma \ref{Orthogonality}. Assume that the result is true for $k=0,1,\cdots,n$. Note that the $q$-Hermite polynomials obey the following recurrence relations:
\begin{align*}
&H_0^q(x)=1, \text{ }H_1^q(x)=x \text{ and }\\
&xH_n^q(x)=H_{n+1}^q(x)+ [n]_q H_{n-1}^q(x), \text{ }n\geq 1, \text{ }x\in [-\frac{2}{\sqrt{1-q}},\frac{2}{\sqrt{1-q}}] \text{ \cite[Defn. 1.9]{BKS}}.
\end{align*}
Thus, by functional calculus one has 
\begin{align*}
H_{n+1}^q(s_q(\xi_0))\xi_1&=s_q(\xi_0)H_{n}^q(s_q(\xi_0))\xi_1 -[n]_q H_{n-1}^q(s_q(\xi_0))\xi_1\\
&=s_q(\xi_0)(\xi_0^{\otimes n}\otimes \xi_1) -[n]_q (\xi_0^{\otimes (n-1)}\otimes\xi_1)\\
&= \xi_0^{\otimes (n+1)}\otimes\xi_1, \text{ by Eq. \eqref{Leftmult} and Lemma \ref{Orthogonality}}.
\end{align*}
Thus, by induction the above conclusion follows. $($This can also be proved by Lemma \ref{Orthogonality} and Lemma \ref{Wickformula}$)$.  

Note that $x$ is a limit in $s.o.t$. of a sequence of operators from the linear span of $\{H_n^q(s_q(\xi_0)): n\geq 0\}$. Consequently, $xs_q(\xi_1)\Omega\in \overline{\text{span }\{\xi_{0}^{\otimes n}\otimes \xi_1:n\geq 0\}}^{\norm{\cdot}_{q}}$. Therefore, $xs_q(\xi_1)=s_q(\xi_1)x$ forces that $a_n=0$ for all $n\geq 1$. Thus, $x\Omega=a_0\Omega$ and hence $x=a_0 1$ as $\Omega$ is separating for $M_q$. So the proof is complete. 
\end{proof}

\begin{Remark}
Note that Hiai has proved that if the almost periodic part of $(U_t)$ is infinite dimensional, then the centralizer $M_q^{\varphi}$ of $M_q$ has trivial relative commutant and thus $M_q$ is a factor \cite{Hiai}. Thus, combined with our result the factoriality of $M_q$ remains open only when $\CH_\R$ is of even dimension and $(U_t)$ is ergodic $($and non trivial$)$. 
\end{Remark}

\section{Structure of the Centralizer}\label{StructureofCentralizer}
In this section, we discuss the factoriality of the centralizer $M_q^{\varphi}$ of the $q$-deformed Araki-Woods von Neumnn algebra $M_q$. By Rem. \ref{CentralizerRemark} and Thm. \ref{masa}, it follows that if the point spectrum of the analytic generator $A$ of $(U_t)$ is $\{1\}$ and is of simple multiplicity, then $M_q^{\varphi}$ is a masa in $M_q$. Thus, for the centralizer to be large, the almost periodic part of $(U_t)$ need to be reasonably large. 

For a short account on bicentralizers that follows, we refer the reader to \cite{Hag87}. Let $M$ be a separable type $\rm{III}_{1}$ factor and let $\psi$ be a faithful normal state on $M$. Denote $[x,y]=xy-yx$ and 
$[x,\psi]=x\psi-\psi x$ for $x,y\in M$. The asymptotic centralizer of $\psi$ is defined to be
\begin{align*}
\text{AC}_{\psi}=\{(x_n)\in \ell^{\infty}(\N,M): \norm{[x_n,\psi]}\rightarrow 0 \text{ as }n\rightarrow \infty\}.
\end{align*}
Observe that $\text{AC}_{\psi}$ is a unital $C^{\ast}$-subalgebra of $\ell^{\infty}(\N,M)$. The bicentralizer of $\psi$ is defined by 
\begin{align*}
B_{\psi}=\{y\in M: [y,x_n]\rightarrow 0 \text{ ultrastrongly as }n\rightarrow\infty \text{ for all }(x_n)\in \text{AC}_{\psi}\}.
\end{align*}
Note that $B_{\psi}$ is a von Neumann subalgebra of $M$ which is globally invariant with respect to the modular automorphism group $(\sigma_t^{\psi})$. Further, $B_{\psi}\subseteq (M^{\psi})^{\prime}\cap M$. The type $\rm{III}_{1}$ factor $M$ is said to have trivial bicentralizer if $B_{\psi}=\C 1$ for any faithful normal state $\psi$ of $M$. The bicentralizer problem of Connes is open and asks if every separable type $\rm{III}_{1}$ factor has trivial bicentralizer. 

\begin{Theorem}\label{TrivialRelativeCommutant}
Let $\CH_\R$  be a real Hilbert space such that $dim(\CH_\R)\geq 2$. Let $(U_t)$ be a strongly continuous real orthogonal representation of $\R$ on $\CH_\R$ such that -\\
\noindent $(i)$ there exists a unit vector $\xi_0 \in \CH_\R$  satisfying $U_t\xi_0= \xi_0$ for all $t \in \R$, \\
\noindent$(ii)$ the almost periodic part of $(U_t)$ is at least two dimensional.\\
Then, 
\begin{align*}
(M_q^\varphi)^{\prime} \cap M_q = \C 1.
\end{align*}
In particular, the centralizer $M_q^\varphi$ of $M_q$ is a factor. Moreover, if $M_q$ is a $\rm{III}_{1}$ factor then it has trivial bicentralizer.
\end{Theorem}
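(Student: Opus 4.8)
The plan is to separate cases according to the dimension of the almost periodic part of $(U_t)$, and to treat the genuinely new case by the mechanism of Theorem~\ref{Factor1eigenvalue}, using one non-trivial almost periodic eigenvector to produce a centralizer element. Write the almost periodic part of $(U_t)$ as in \eqref{Representation}: by $(i)$, $N_1\ge 1$; by $(ii)$, $N_1+2N_2\ge 2$. If $N_1\ge 2$ then $(U_t)$ fixes two $\langle\cdot,\cdot\rangle_U$-orthonormal vectors and Corollary~\ref{Trivialrelcommutant} already gives $(M_q^\varphi)'\cap M_q=\C1$. So I may assume $N_1=1$, hence $N_2\ge 1$; set $\xi_0=e_1$ and fix $k$ with $Ae_k^1=\lambda_k^{-1}e_k^1$, $Ae_k^2=\lambda_ke_k^2$, $\lambda_k>1$, $\norm{e_k^i}_U=1$. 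Then $\{\xi_0,e_k^1,e_k^2\}$ is $\langle\cdot,\cdot\rangle_U$-orthonormal: $e_k^1,e_k^2$ are eigenvectors of $A$ for distinct eigenvalues, and $\langle\xi_0,e_k^i\rangle_U=\langle e_1,e_k^i\rangle_{\CH_\C}=0$ since $\frac{2}{1+A^{-1}}\xi_0=\xi_0$ and the summands in \eqref{Representation} are mutually orthogonal.

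Let $x\in(M_q^\varphi)'\cap M_q$ and put $y:=s_q(e_k^1\otimes e_k^2)$. By Theorem~\ref{masa} (via Theorem~\ref{eigen-vector}), $M_{\xi_0}\subseteq M_q^\varphi$ is a masa in $M_q$, so $x\in M_{\xi_0}$ and, by Lemma~\ref{Intersection}, $x\Omega=\sum_{n\ge0}a_n\xi_0^{\otimes n}$ with $\xi_0^{\otimes n}=H_n^q(s_q(\xi_0))\Omega$ and $\sum_n\abs{a_n}^2[n]_q!<\infty$. Since $e_k^1\otimes e_k^2\in M_q\Omega$ (Lemma~\ref{VectorinMq}) and $\lambda_k^{-1}\lambda_k=1$, Theorem~\ref{CentraliserDescribe} gives $y\in M_q^\varphi$, so $xy=yx$. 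I then compute both sides component by component in $\CF_q(\CH)=\bigoplus_m\CH^{\otimes_q m}$. For $yx\Omega$: writing $e_k^1\otimes e_k^2$ as a linear combination of simple tensors in the real vectors $f_k^1,f_k^2$ and applying Lemma~\ref{Wickformula} exactly as in the proof of Theorem~\ref{wmixingfactor} (legitimate since $\langle f_k^a,\xi_0\rangle_U=0$) gives $y(\xi_0^{\otimes n})=e_k^1\otimes e_k^2\otimes\xi_0^{\otimes n}$, so by boundedness of $y$, $yx\Omega=\sum_na_n(e_k^1\otimes e_k^2\otimes\xi_0^{\otimes n})$. For $xy\Omega=x(e_k^1\otimes e_k^2)$: the $q$-Hermite recurrence induction of Theorem~\ref{Factor1eigenvalue}, now with $e_k^1\otimes e_k^2$ in place of $\xi_1$ (using $\langle\xi_0,e_k^i\rangle_U=0$ and \eqref{Leftmult}), gives $H_n^q(s_q(\xi_0))(e_k^1\otimes e_k^2)=\xi_0^{\otimes n}\otimes e_k^1\otimes e_k^2$; since the sequences $\{\xi_0^{\otimes n}\}$ and $\{\xi_0^{\otimes n}\otimes e_k^1\otimes e_k^2\}$ are orthogonal with $\norm{\xi_0^{\otimes n}\otimes e_k^1\otimes e_k^2}_q^2=[n]_q!=\norm{\xi_0^{\otimes n}}_q^2$ (by \eqref{qFock} and \eqref{Normelt}), the assignment $\xi_0^{\otimes n}\mapsto\xi_0^{\otimes n}\otimes e_k^1\otimes e_k^2$ extends to a unitary intertwining $s_q(\xi_0)|_{L^2(M_{\xi_0},\varphi)}$ with $s_q(\xi_0)$ restricted to the closed span of the latter family, and it sends $\Omega\mapsto e_k^1\otimes e_k^2$; hence $x(e_k^1\otimes e_k^2)=\sum_na_n(\xi_0^{\otimes n}\otimes e_k^1\otimes e_k^2)$.

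Equating the degree-$(n+2)$ components of $xy\Omega=yx\Omega$ gives, for every $n\ge1$, $a_n\big(\xi_0^{\otimes n}\otimes e_k^1\otimes e_k^2-e_k^1\otimes e_k^2\otimes\xi_0^{\otimes n}\big)=0$; as $\{\xi_0,e_k^1,e_k^2\}$ is orthonormal in $\norm{\cdot}_U$, these are distinct elements of an orthonormal basis of $\CH^{\otimes_0(n+2)}$, hence linearly independent in $\CH^{\otimes_q(n+2)}\subseteq\CF_q(\CH)$ by Lemma~\ref{Equivalent}, so $a_n=0$. Therefore $x\Omega=a_0\Omega$ and, $\Omega$ being separating, $x=a_01\in\C1$; this proves $(M_q^\varphi)'\cap M_q=\C1$, whence $\mathcal{Z}(M_q^\varphi)\subseteq(M_q^\varphi)'\cap M_q=\C1$ and $M_q^\varphi$ is a factor. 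If moreover $M_q$ is of type $\mathrm{III}_1$, then the bicentralizer of the $q$-quasi-free state satisfies $B_\varphi\subseteq(M_q^\varphi)'\cap M_q=\C1$, and by \cite{Hag87} (triviality of the bicentralizer is independent of the chosen faithful normal state) $B_\psi=\C1$ for every faithful normal state $\psi$ on $M_q$, i.e.\ $M_q$ has trivial bicentralizer.

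The step I expect to require the most care is the pair of Wick-type identities $y(\xi_0^{\otimes n})=e_k^1\otimes e_k^2\otimes\xi_0^{\otimes n}$ and $x(e_k^1\otimes e_k^2)=\sum_na_n\,\xi_0^{\otimes n}\otimes e_k^1\otimes e_k^2$: the first must be routed through the real and imaginary parts of the non-real vectors $e_k^i$ before Lemma~\ref{Wickformula} applies, and the second requires justifying the interchange of the bounded operator $x$ with the infinite $q$-Hermite expansion of $x\Omega$, which is handled cleanly through the unitary equivalence of the two restrictions of $s_q(\xi_0)$ indicated above. The remaining ingredients — the reduction to $N_1=1$, the orthogonality and norm computations in \eqref{qFock}, and the bicentralizer deduction — follow directly from results already in place.
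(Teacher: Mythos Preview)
Your proof is correct and follows essentially the same route as the paper's: reduce to $x\in M_{\xi_0}$ via the masa property, pick a centralizer element $y=s_q(\zeta_1\otimes\cdots\otimes\zeta_k)$ built from almost-periodic eigenvectors orthogonal to $\xi_0$, and compare $yx\Omega$ with $xy\Omega$ using Lemma~\ref{Wickformula} on one side and the $q$-Hermite recurrence on the other to force $a_n=0$ for $n\ge1$. The only cosmetic differences are that the paper handles all cases at once with an abstract choice of $\zeta_1\otimes\cdots\otimes\zeta_k$ rather than your explicit $e_k^1\otimes e_k^2$, computes $xy\Omega$ via the right-multiplication formula $xy\Omega=Jy^*Jx\Omega$ (using $y\in M_q^\varphi$) instead of your intertwining-isometry argument, and invokes the Connes--St{\o}rmer transitivity theorem \cite{CS78} rather than \cite{Hag87} for the bicentralizer conclusion.
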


\begin{proof}
The first statement was settled in the case when the almost periodic part of $(U_t)$ is two dimensional $($see Cor. \ref{Trivialrelcommutant}$)$. First of all, note that from Cor. \ref{Corollary}, the von Neumann algebra $M_{\xi_0} =vN(s_q(\xi_0)) \subseteq M_q^\varphi$ is a masa in $M_q$ with a unique $\varphi$-preserving faithful normal conditional expectation $\E_{\xi_0}: M_q \rightarrow M_{\xi_0}$. Therefore, $(M_q^\varphi)^{\prime}\cap M_q \subseteq M_{\xi_0}$. Let $x\in (M_q^{\varphi})^{\prime}\cap M_q$. 

Since the dimension of the almost periodic part of $(U_t)$ is at least two, so from Thm. \ref{CentraliserDescribe}, it follows that there exist vectors $\zeta_i\in \CH_\C$ $($with real and imaginary parts individually analytic$)$, $1\leq i\leq k$, such that $\zeta_1\otimes\cdots\otimes\zeta_{k}\in M_q^{\varphi}\Omega$ and $\zeta_{i}$ and as well as its real and imaginary parts are orthogonal to $\xi_0$ for all $1\leq i\leq k$, with respect to $\langle\cdot,\cdot\rangle_{\CH_\C}$ $($as well as orthogonal in $\langle\cdot,\cdot\rangle_{U}$, as $dim(\CH_\R)\geq 2)$. Let $y=s_q(\zeta_1\otimes\cdots\otimes\zeta_{k})\in M_q^{\varphi}$. As seen in the proof of Lemma \ref{Intersection}, $H_n^q(s_q(\xi_0))\Omega=\xi_0^{\otimes n}$ for all $n\geq 0$. Consequently, $x\Omega\in \overline{span\text{ }\mathcal{E}_{\xi_0}}^{\norm{\cdot}_{q}}$ from Lemma \ref{Intersection} and hence, 
\begin{align*}
x\Omega=\sum_{n=0}^{\infty} a_n\xi_{0}^{\otimes n}=\sum_{n=0}^{\infty} a_nH_n^q(s_q(\xi_0))\Omega, \text{  }a_n\in\C,
\end{align*}
where the series converges in $\norm{\cdot}_q$. Moreover, decomposing vectors into real and imaginary parts and using Lemma \ref{Wickformula}, it follows that
\begin{align*}
&yx\Omega \in \overline{span\text{ }\{\zeta_1\otimes\cdots\otimes\zeta_{k}\otimes \xi_0^{\otimes n}: \text{ } n\geq 0 \}}^{ \norm{\cdot}_q }.
\end{align*}
Further, decomposing vectors into real and imaginary parts and using Eq. \eqref{Leftmult} and Lemma \ref{Orthogonality} it follows that $s_q(\xi_0)(\zeta_1\otimes\cdots\otimes\zeta_{k})=\xi_0\otimes \zeta_1\otimes\cdots\otimes\zeta_{k}$. Assume that $s_q(H_m^q(\xi_0))(\zeta_1\otimes\cdots\otimes\zeta_{k})=\xi_0^{\otimes m}\otimes \zeta_1\otimes\cdots\otimes\zeta_{k}$,  for $m=0,1,\cdots,n$. Using the recurrence relations of $q$-Hermite polynomials $($as in the proof of Thm. \ref{Factor1eigenvalue}$)$,
Eq. \eqref{Leftmult},  Lemma \ref{Orthogonality} and the induction hypothesis, it follows that $H_n^q(s_q(\xi_0))(\zeta_1\otimes\cdots\otimes\zeta_{k})=\xi_0^{\otimes n}\otimes \zeta_1\otimes\cdots\otimes\zeta_{k}$, for all $n\geq 0$. Now note that 
\begin{align*}
xy\Omega &= Jy^{*}J x\Omega=\sum_{n=0}^{\infty} a_n Jy^*J H_n^q(s_q(\xi_0))\Omega=\sum_{n=0}^{\infty} a_n H_n^q(s_q(\xi_0))y\Omega\\
&=\sum_{n=0}^{\infty} a_n \big(\xi_0^{\otimes n}\otimes (\zeta_1\otimes\cdots\otimes\zeta_{k})\big).
\end{align*}
Since, $xy = yx$, so $a_n=0$ for all $n\neq 0$. Thus, the first statement follows. 

The final statement is a direct consequence of Connes-St{\o}rmer transitivity theorem \cite{CS78}.
\end{proof}

\section{Type Classification}

In this section, we describe the type of $M_q$ under the same constraints as in \S\ref{Factoriality} by showing that the type depends on the spectral information of $A$ as expected. To begin with, we recall some well known facts about the $S$ invariant of Connes.

The $S$ invariant of a factor $M$ was defined in \cite{Co73} to be the intersection
over all faithful normal semifinite $($f.n.s.$)$ weights $\phi$ of the spectra of the associated modular operators
$\Delta_\phi$. Further, $M$ is a type $\rm{III}$ factor if and only if $0 \in S(M)$ and in this case Connes classified type $\rm{III}$ factors using their $S$ invariant as follows:
\begin{align*}
S(M) = 
\begin{cases} 
[0, \infty),  &\text{ if } M  \text{ is  type } \rm{III}_1,\\
\{ 0, 1\},   &\text{ if } M  \text{ is  type } \rm{III}_0,\\
\{ \lambda^n : \text{ } n \in \Z \}\cup \{ 0\},  &\text{ if } M  \text{ is  type } \rm{III}_\lambda, \text{ }0<\lambda<1. 
\end{cases}
\end{align*}

Also, recall from \cite{Co73} that for a fixed faithful normal state $($resp. f.n.s. weight$)$ $\phi$ on $M$, the $S$ invariant can be written as
\begin{align*} 
S(M) = \cap \{ \text{Sp}(\Delta_{\phi_p}): \text{ } 0 \neq p \in \CP(\CZ(M^\phi))\},
\end{align*}
$\CP(\CZ(M^\phi))$ denoting the lattice of projections in the center of the centralizer $M^{\phi}$ and $\phi_p =\phi_{|pMp}$. So, let $\phi$ be a faithful normal state on $M$ and let $0\neq p \in M^{\phi}$ be a projection. Let $\Delta_{\phi_p}$ and $(\sigma_t^{\phi_p})$ respectively denote the modular operator and the modular automorphism group of the corner $pMp$ associated to the positive functional $\phi_p$. When $p=1$, write $\Delta_{\phi_1}$ and $(\sigma_t^{\varphi_1})$ respectively as $\Delta_\phi$ and $(\sigma_t^{\phi})$. It is clear that $\sigma_t^{\phi_p}(pxp)=p\sigma_t^{\phi}(x)p$ for all $x\in M$ and $t\in \R$. It is also easy to check that $\sigma_t^{\phi_p}$ is implemented by $\Delta_{\phi_p}^{it}=p\Delta_{\phi}^{it} p$ for all $t\in \R$.

\begin{Theorem}\label{WeakmixingtoIII_1}
Let $(U_t)$ be a strongly continuous real orthogonal representation of $\R$ on a real Hilbert space $\CH_\R$ such that the weakly mixing component of $(U_t)$  is non trivial. Then $M_q$ is a type $\rm{III}_1$ factor. 
\end{Theorem}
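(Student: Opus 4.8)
The plan is to use Connes' $S$-invariant. Since $M_q$ is already a factor by Theorem~\ref{wmixingfactor}, it remains only to determine its type, and we have the formula $S(M_q)=\bigcap\{\mathrm{Sp}(\Delta_{\varphi_p}):0\ne p\in\CP(\CZ(M_q^\varphi))\}$ recalled above. Being positive self-adjoint, each $\Delta_{\varphi_p}$ has spectrum contained in $[0,\infty)$, so it suffices to prove that $\mathrm{Sp}(\Delta_{\varphi_p})=[0,\infty)$ for every nonzero projection $p$ in the center of $M_q^\varphi$; this forces $S(M_q)=[0,\infty)$, and hence $M_q$ is a $\mathrm{III}_1$ factor by the classification recalled above.

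Decompose $\CH_\R=\CH_c\oplus\CH_{wm}$ as an orthogonal direct sum of $U_t$-invariant subspaces, where $\CH_c$ is the almost periodic (compact) part and $\CH_{wm}\ne0$ is the weakly mixing part. If $\CH_c=0$, then $M_q^\varphi=\C 1$ by Theorem~\ref{CentraliserDescribe}, so the modular flow is ergodic and $M_q$ is a $\mathrm{III}_1$ factor exactly as in the proof of Theorem~\ref{wmixingfactor}. So assume $\CH_c\ne0$ and fix $0\ne p\in\CP(\CZ(M_q^\varphi))$. Since $p\in M_q^\varphi$, Theorem~\ref{CentraliserDescribe} gives that $p\Omega$ lies in the closed $\norm{\cdot}_q$-span of tensors built from $\CS$ (whose letters come from the almost periodic part of $(U_t)$) for which the associated product of eigenvalues is $1$; in particular $\Delta^{it}(p\Omega)=p\Omega$ for all $t$. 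As $p$ commutes with the modular group, $\Delta^{it}$ leaves $L^2(pM_qp,\varphi_p)=\overline{pM_qp\,\Omega}^{\norm{\cdot}_q}$ invariant and $\Delta_{\varphi_p}^{it}$ is its restriction.

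Now choose an orthonormal family $\{\zeta_k\}_{k\ge1}$ of analytic unit vectors in $\CH_{wm}$ with respect to $\langle\cdot,\cdot\rangle_{\CH_\C}$ (Proposition~\ref{analytic}). For a finite tuple from this family, $s_q(\zeta_{k_1}\otimes\cdots\otimes\zeta_{k_n})\in M_q$ by Lemma~\ref{VectorinMq}, and since each $\zeta_{k_j}$ is orthogonal in $\langle\cdot,\cdot\rangle_U$ to every letter occurring in any component of $p\Omega$ (compact $\perp$ weakly mixing), Lemma~\ref{Wickformula} together with Eq.~\eqref{Leftmult} yields $s_q(\zeta_{k_1}\otimes\cdots\otimes\zeta_{k_n})\,p\Omega=\zeta_{k_1}\otimes\cdots\otimes\zeta_{k_n}\otimes p\Omega$. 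One then checks that applying $p$ on the left modifies only the $\CH_c$-part of this tensor, so that the vectors $p\,s_q(\zeta_{k_1}\otimes\cdots\otimes\zeta_{k_n})\,p\,\Omega\in L^2(pM_qp,\varphi_p)$ span a subspace on which $\Delta_{\varphi_p}^{it}$ is unitarily equivalent to the $q$-amplification $\bigoplus_{n\ge0}\bigl((A^{-1}|_{\CH_{wm}})^{\otimes_q n}\otimes 1\bigr)$; here one uses $U_{-t}=A^{-it}$ and that the $\CH_c$-tensors in $p\Omega$ are $\Delta$-fixed. I expect this to be the main obstacle: one must carefully track how the non-commuting operator $p$ acts on the mixed tensors $\zeta_{k_1}\otimes\cdots\otimes\zeta_{k_n}\otimes(\CH_c\text{-tensor})$, verify that the $\CH_{wm}$-part is left untouched while only the $\CH_c$-part is altered, and control norms so that the subspace is genuinely present in the corner.

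Finally, because $\CH_{wm}\ne0$ is weakly mixing, the spectral measure of $A|_{\CH_{wm}}$ is non-atomic; by the relation $\CJ A=A^{-1}\CJ$ its support $P\subseteq(0,\infty)$ is a perfect set, symmetric under $x\mapsto x^{-1}$, and $P\ne\{1\}$ (otherwise that component would be trivial). A nontrivial perfect set contains points whose ratio is arbitrarily close to, but different from, $1$, so the closed subgroup of $(0,\infty)$ generated by $P$ is all of $(0,\infty)$. By Lemma~\ref{Equivalent} and Proposition~\ref{Eigenvalueinamplification} the spectral behaviour of the $q$-amplification coincides with that of the ordinary tensor-power representation, so $\overline{\bigcup_{n\ge0}\mathrm{Sp}\bigl((A^{-1}|_{\CH_{wm}})^{\otimes_q n}\bigr)}$ is the closure of the semigroup generated by $P\cup\{1\}$, namely $[0,\infty)$ (the value $0$ appears because $P$ meets $(0,1)$). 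Hence $\mathrm{Sp}(\Delta_{\varphi_p})\supseteq[0,\infty)$, so $\mathrm{Sp}(\Delta_{\varphi_p})=[0,\infty)$. Since $p$ was arbitrary, $S(M_q)=[0,\infty)$ and $M_q$ is a $\mathrm{III}_1$ factor.
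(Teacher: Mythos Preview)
Your overall strategy---compute $S(M_q)$ via $\bigcap_p\mathrm{Sp}(\Delta_{\varphi_p})$ and show each spectrum is $[0,\infty)$ by exploiting the non-atomic spectral data coming from $\CH_{wm}$---matches the paper's, and your endgame (a perfect, inversion-symmetric support generates the full multiplicative group $(0,\infty)$) is fine. The genuine gap is exactly where you flagged it: the claim that ``applying $p$ on the left modifies only the $\CH_c$-part'' of $\zeta_{k_1}\otimes\cdots\otimes\zeta_{k_n}\otimes p\Omega$. You have not shown this, and it is not clear it is even true in a form strong enough to yield the asserted unitary equivalence with $\bigoplus_n (A^{-1}|_{\CH_{wm}})^{\otimes_q n}\otimes 1$. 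The operator $p$ has a Wick expansion containing annihilation pieces $c_q(\cdot)^*$; these act on the \emph{left} of the tensor and will hit $\zeta_{k_1}$ first, not the $\CH_c$-letters sitting on the right. Controlling this interaction, and then showing the resulting vectors both survive (are nonzero) and span a $\Delta_{\varphi_p}^{it}$-invariant subspace with the claimed spectral type, is a substantial task you have not carried out.

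The paper sidesteps this entirely. Rather than building a subrepresentation, it works with a \emph{single} weakly mixing vector $\xi\in\CH_\R$ and its elementary spectral measure $\mu_\xi$ for $t\mapsto\CF(U_t)=\Delta^{-it}$, which is non-atomic. The only computation needed is $p\xi=p\Omega\otimes\xi$ (obtained exactly as in the proof of Theorem~\ref{wmixingfactor}: for $a\in M_q^\varphi$ one has $a\xi=a\Omega\otimes\xi$ because $a\Omega\in\CW$ and every letter of $\CW$ is $\langle\cdot,\cdot\rangle_U$-orthogonal to $\xi$), which immediately gives $p\xi\neq 0$. Since $p$ commutes with $\Delta^{it}$, the elementary spectral measures split as $\mu_\xi=\mu_{p\xi}+\mu_{(1-p)\xi}$, so $\mu_{p\xi}$ is non-atomic as well. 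Doing the same with $J\xi$ puts non-atomic mass on both sides of $1$ inside $\mathrm{Sp}(\Delta_{\varphi_p})$; the Fock structure $\CF(U_t)=\mathrm{id}\oplus\bigoplus_n U_t^{\otimes_q n}$ then forces $\mathrm{Sp}(\Delta_{\varphi_p})$ to contain the closed multiplicative group generated by this support, hence all of $[0,\infty)$. This is strictly easier than what you attempted: you never need to understand $p$ acting on higher tensors, only on a one-particle vector, and the non-atomicity is inherited for free from the additive decomposition of $\mu_\xi$.
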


\begin{proof}
Note that $\CH_\R$ is infinite dimensional. We need to show that $S(M_q)=[0,\infty)$. So, let $0 \neq p \in \CP(\CZ(M_q^\varphi))$. By the hypothesis and Prop. \ref{Eigenvector}, there exists 
$0\neq \xi \in \CH_\R\subseteq \CH_\C\subseteq \mathcal{F}_q(\CH)$ such that
\begin{align*}
\frac{1}{2T}\int_{-T}^{T}\abs{\langle U_t\xi,\xi\rangle_U}^{2}dt\rightarrow 0, \text{ as }T\rightarrow \infty \text{ (see Eq. \eqref{Liftisunitary})}.
\end{align*}
Thus, by Eq. \eqref{qFock}, Eq. \eqref{modulartheory}, Eq. \eqref{modularaut} and the discussion following it, one has  
\begin{align*}
\frac{1}{2T}\int_{-T}^{T}\abs{\langle \mathcal{F}(U_t)\xi,\xi\rangle_q}^{2}dt\rightarrow 0, \text{ as }T\rightarrow \infty.
\end{align*}
Consequently, if $\mu_{\xi}$ denotes the elementary spectral measure $($on $\R)$ associated to $\xi$ of the representation $\{t\mapsto\mathcal{F}(U_t):t\in \R\}$, then $\mu_{\xi}$ is non atomic $($from Eq. \eqref{modularaut}$)$. 

If $p\neq 1$, note that $p\xi, (1-p)\xi$ are non zero vectors. Indeed, if $\zeta\in M_q^{\varphi}\Omega$ is such that $s_q(\zeta)=p$ $($see Eq. \eqref{sq}$)$, then by Thm. \ref{Wickformula} and Thm. \ref{CentraliserDescribe} $($as in the proof of Thm. \ref{wmixingfactor}$)$, it follows that $p\xi =\zeta\otimes \xi\neq 0$. Similar is the argument for $(1-p)\xi$. Let $\mu_{p\xi},\mu_{(1-p)\xi}$ respectively denote the elementary spectral measures of $\{t\mapsto\mathcal{F}(U_t):t\in \R\}$ associated to the vectors $p\xi$ and $(1-p)\xi$. Note that $\mu_{p\xi}$ is the elementary spectral measure of $t\mapsto p\Delta^{it} p$ $(=\Delta_{\varphi_p}^{it})$, $t\in\R$, corresponding to the vector $p\xi$, and the former implements  $(\sigma_t^{\varphi_p})$. Also, as $p\in M_q^{\varphi}$, so the range of $p$ is an invariant subspace of $\{\mathcal{F}(U_t):t\in \R\}$. Hence,
\begin{align*}
\langle \mathcal{F}(U_t)p\xi,(1-p)\xi\rangle_q =0, \text{ for all }t\in \R.
\end{align*}
Consequently, $\mu_{\xi}=\mu_{p\xi}+\mu_{(1-p)\xi}$, thus $\mu_{p\xi}$ and $\mu_{(1-p)\xi}$ are both non atomic. 

Note that the weakly mixing component of $\{t\mapsto \mathcal{F}(U_t):t\in\R\}$ is invariant under the anti-unitary $J$. This follows by using the fact that $J\Delta^{it} J=\Delta^{it}$ for all $t\in \R$ and by the definition of weak mixing. Thus, $\mu_{p J\xi}$ is non zero and non atomic. Note that both $\xi$ and $J\xi$ are vectors in the $1$-particle space $\CH$ of $\mathcal{F}_q(\CH)$. This forces that the spectral measure of the action $\{t\mapsto \mathcal{F}(U_t):t\in \R\}$ when restricted to the $1$-particle space $\CH$ contains a non trivial non atomic component on both sides of $0$ by an application of Stone-Weierstrass theorem. Since, $\mathcal{F}(U_t)=id\oplus \oplus_{n\geq 1}U_t^{\otimes_q n}$, $t\in\R$, it follows that $\text{Sp}(\Delta_{\varphi_p})$ contains a closed multiplicative group inside $[0,\infty)$ generated by the support of a non atomic measure such that the support intersects both $(0,1)$ and $(1,\infty)$ non trivially $($in measure theoretic sense$)$. So, $\text{Sp}(\Delta_{\varphi_p})=[0,\infty)$. Thus, the result follows.
\end{proof}

Now we turn to the case when the orthogonal representation is almost periodic.

\begin{Theorem}\label{AlmostPeriodicCase}
Let $(U_t)$ be a strongly continuous almost periodic orthogonal representation of $\R$ on a real Hilbert space $\CH_\R$ such that $dim(\CH_\R)  \geq 2$ and such that there exists a unit vector $\xi_0\in\CH_\R$ with $U_t\xi_0=\xi_0$ for all $t\in\CH_\R$. Let $G$ be the closed subgroup of $\R^{\times}_+ $
generated by the spectrum of $A$. Then,
\begin{align*}
M_q \text{ is }  
\begin{cases} 
\text{ type } \rm{III}_1 &\text{ if } G = \R_+^{\times},\\
\text{ type } \rm{III}_\lambda &\text{ if } G = \lambda^{\Z}, \text{ } 0 < \lambda < 1,\\
\text{ type } \rm{II}_1 &\text{ if } G = \{1\}.
\end{cases}
\end{align*}  
The type $\rm{II}_1$ case corresponds to $(U_t)=(id)$ and thus $M_q$ is the Bo$\overset{.}{\text{z}}$ejko-Speicher's $\rm{II}_1$ factor.
\end{Theorem}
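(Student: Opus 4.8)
The plan is to compute Connes' $S$-invariant of $M_q$ and then appeal to Connes' classification of type $\rm{III}$ factors recalled at the start of this section. Since $(U_t)$ admits a nonzero fixed vector, $M_q$ is a factor by Thm.~\ref{Factor1eigenvalue}, so $S(M_q)$ is well defined. The crucial simplification is that here $\CZ(M_q^{\varphi})=\C 1$: indeed, $(U_t)$ being almost periodic with $dim(\CH_\R)\geq 2$, its almost periodic part is all of $\CH_\R$ and hence at least two dimensional, so Thm.~\ref{TrivialRelativeCommutant} applies and gives $(M_q^{\varphi})'\cap M_q=\C 1$, whence $\CZ(M_q^{\varphi})=(M_q^{\varphi})'\cap M_q^{\varphi}\subseteq \C 1$. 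Therefore the only nonzero projection in $\CZ(M_q^{\varphi})$ is $1$, and consequently $S(M_q)=\text{Sp}(\Delta_{\varphi})$.

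It thus remains to compute $\text{Sp}(\Delta_\varphi)$. By Eq.~\eqref{modulartheory} (equivalently Eq.~\eqref{modularaut}), $\Delta_\varphi$ acts on $\CF_q(\CH)=\C\Omega\oplus\bigoplus_{n\geq 1}\CH^{\otimes_q n}$ as $id\oplus\bigoplus_{n\geq 1}(A^{-1})^{\otimes_q n}$. As $(U_t)$ is almost periodic, the weakly mixing component $\widetilde{\CH}_\R$ in the decomposition \eqref{Representation} is zero; hence $A$ is diagonalizable on $\CH_\C$ with point spectrum $\mathcal{E}_A=\{1\}\cup\{\lambda_k\}\cup\{\lambda_k^{-1}\}$, and the same is true for $\widetilde A=A$ on $\CH$ by Prop.~\ref{Eigenvector} and the discussion preceding it. By Lemma~\ref{Equivalent}, $(A^{-1})^{\otimes_q n}$ is similar, via the bounded invertible intertwiner $S$, to the operator $(A^{-1})^{\otimes_0 n}$ on the ordinary tensor product $\CH^{\otimes_0 n}$, so the two have the same spectrum; and since $(A^{-1})^{\otimes_0 n}$ is a tensor product of diagonalizable positive operators, $\text{Sp}\big((A^{-1})^{\otimes_0 n}\big)$ equals the closure of the set of $n$-fold products of elements of $\text{Sp}(A^{-1})=\mathcal{E}_A^{-1}=\mathcal{E}_A$. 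Because $\mathcal{E}_A$ is symmetric under inversion and contains $1$, the union over all $n\geq 0$ of these product sets (the case $n=0$ contributing the eigenvalue $1$ of the $id$ summand) is exactly the multiplicative subgroup $G$ of $\R_+^{\times}$ generated by $\text{Sp}(A)$. Hence $\text{Sp}(\Delta_\varphi)=\overline{G}$, the closure taken in $[0,\infty)$.

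The three cases now follow by reading off $\overline{G}$. If $G=\R_+^{\times}$, then $\overline{G}=[0,\infty)$, so $S(M_q)=[0,\infty)$ and $M_q$ is type $\rm{III}_1$. If $G=\lambda^{\Z}$ with $0<\lambda<1$, then $G$ is discrete in $(0,\infty)$ with $0$ as its only accumulation point, so $\overline{G}=\{\lambda^n:n\in\Z\}\cup\{0\}$; in particular $0\in S(M_q)$, so $M_q$ is a type $\rm{III}$ factor, and Connes' classification forces it to be type $\rm{III}_\lambda$. If $G=\{1\}$, then $\text{Sp}(A)=\{1\}$, so $A=I$ and $U_t=A^{it}=id$ for all $t$; then $\Delta_\varphi=I$, $\varphi$ is a trace, and the factor $M_q$ is finite. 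It is moreover infinite dimensional since it contains the diffuse abelian subalgebra $M_{\xi_0}\cong L^\infty\big([-\frac{2}{\sqrt{1-q}},\frac{2}{\sqrt{1-q}}],\nu_q\big)$, so $M_q$ is type $\rm{II}_1$; here Hiai's construction collapses to that of Bo\.{z}ejko--Speicher, so $M_q$ is the corresponding $q$-Gaussian $\rm{II}_1$ factor.

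I expect the only delicate step to be the spectral identification $\text{Sp}(\Delta_\varphi)=\overline{G}$: this requires the diagonalizability of $A$ on the $1$-particle space $\CH$ (via $\widetilde{\CH}_\R=0$ and Prop.~\ref{Eigenvector}), the passage from the $q$-deformed tensor powers to the ordinary ones through the similarity $S$ of Lemma~\ref{Equivalent} to control the spectrum, and the observation that, because $\mathcal{E}_A$ is inversion-symmetric, the sub-monoid of $\R_+^{\times}$ generated by the eigenvalues of $A$ is already the full group $G$. Once $\text{Sp}(\Delta_\varphi)$ is known, the remainder is a direct application of Thm.~\ref{TrivialRelativeCommutant} and the $S$-invariant formalism.
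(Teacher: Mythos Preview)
Your proposal is correct and follows essentially the same approach as the paper: reduce to $S(M_q)=\text{Sp}(\Delta_\varphi)$ via the triviality of $\CZ(M_q^{\varphi})$ (Thm.~\ref{TrivialRelativeCommutant}), then identify $\text{Sp}(\Delta_\varphi)$ with $\overline{G}$ from the tensor-power structure of $\Delta_\varphi$. The paper's proof is terse and simply invokes Prop.~\ref{Eigenvalueinamplification} for the spectral step, whereas you spell this out via the similarity $S$ of Lemma~\ref{Equivalent}; these are the same computation, since Prop.~\ref{Eigenvalueinamplification} is itself proved through that similarity.
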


\begin{proof} 
The hypothesis forces that if $dim(\CH_\R)=2$, then $M_q$ is a $\rm{II}_1$ factor from Cor. \ref{Trivialrelcommutant} and there is nothing to prove. If $dim(\CH_\R)\geq 3$, then by Thm. \ref{TrivialRelativeCommutant} it follows that $(M_q^{\varphi})^{\prime}\cap M_q=\C 1$. Thus, $M_q^{\varphi}$ is a factor, and hence $S(M_q)$ is completely determined by $\text{Sp}(\Delta)$. Now use the fact that $\mathcal{F}(U_t)=id\oplus \oplus_{n\geq 1}U_t^{\otimes_q n}$, $t\in\R$, and Prop. \ref{Eigenvalueinamplification} to complete the proof. We omit the details.
\end{proof}



\begin{thebibliography}{}

\bibitem[Ave11]{avsec}
S.~Avesec, \emph{Strong solidity of the q-Gaussian algebras for all $-1<q<1$}, preprint, 
  arXiv arXiv:1110.4918, (2011).
  
\bibitem[BHV15]{BHV15}
R. Boutonnet, C. Houdayer and S. Vaes, \emph{Strong solidity of free Araki-Woods factors}, preprint,  arXiv: 1512.04820, (2015).

\bibitem[BKS97]{BKS}
M.~Bo$\overset{.}{\text{z}}$ejko, B.~K\"{u}mmerer and R.~Speicher, \emph{{$q$}-{G}aussian processes:
non-commutative and classical aspects}, Comm. Math. Phys., \textbf{185}
(1997), no.~1, 129--154.

\bibitem[BS91]{BS}
M.~Bo$\overset{.}{\text{z}}$ejko and R.~Speicher, \emph{An example of a generalized {B}rownian
  motion}, Comm. Math. Phys., \textbf{137} (1991), no.~3, 519--531. 

\bibitem[BS94]{BSR}
M.~Bo$\overset{.}{\text{z}}$ejko and R.~Speicher, \emph{Completely positive maps on {C}oxeter groups, deformed
  commutation relations, and operator spaces}, Math. Ann., \textbf{300} (1994),
  no.~1, 97--120. 

\bibitem[CFM13]{Jan-Fang-Kunal}
J. Cameron, J. Fang and K. Mukherjee, \emph{Mixing subalgebras of
  finite von {N}eumann algebras}, New York J. Math., \textbf{19} (2013),
  343--366. 
  
\bibitem[CFM14]{CFM2}
J. Cameron, J. Fang and K. Mukherjee, \emph{Mixing and weak mixing abelian subalgebras of type $\rm{II}_1$ factors}, preprint.   


\bibitem[Co73]{Co73}
A. Connes, \emph{Une classification des facteurs de type
$\rm{III}$}, Ann. Sci. {\'E}cole Norm. Sup. (4) 6: 133--252, (1973).  

\bibitem[CS78]{CS78}
A. Connes and E. St{\o}rmer, \emph{Homogeneity of the State Space of Factors of Type $\rm{III}_1$},
J. Funct. Anal., \textbf{28} (1978), 187--196. 


\bibitem[Dab14]{Dab14}
Y. Dabrowski,  \emph{A free stochastic partial differential equation},  
Ann. Inst. H. Poincar{\'e} Probab. Statist.,
    \textbf{50} (4) (2014), 1404--1455.


\bibitem[DSS06]{DSS06}
K. Dykema, A. Sinclair and R. Smith, \emph{Values of the {P}uk\'anszky invariant in free group factors
              and the hyperfinite factor}, J. Funct. Anal.
\textbf{240} (2006), no.~2, 373--398.   

\bibitem[FMI77]{FMI77}
J. Feldman and C.~C. Moore,\emph{ Ergodic equivalence relations, cohomology, and von {N}eumann
  algebras. {I}}, Trans. Amer. Math. Soc., \textbf{234} (1977), no~2, 289--324.

\bibitem[FMII77]{FMII77}
J. Feldman and C.~C. Moore,\emph{ Ergodic equivalence relations, cohomology, and von {N}eumann
  algebras. {II}}, Trans. Amer. Math. Soc., \textbf{234} (1977), no~2, 325--359.
  
  
  
\bibitem[Fal00]{Fal00}
T. Falcone, \emph{{$L^2$}-von {N}eumann modules, their relative tensor
              products and the spatial derivative}, Illinois J. Math.,
  \textbf{44} (2000), no.~2, 407--437. 

\bibitem[FB70]{FB70}  
U. Frisch and R. Bourret, \emph{Parastochastics}, J. Math. Phys., 
\textbf{11}(2), (1970), 364--390. 

\bibitem[Gre90]{Gre90}
O.W. Greenberg, \emph{$Q$-mutators and violations of statistics}, University of Maryland,
Preprint 91-034, (1990) (UM-PP-91-034, C90-04-16.1).


\bibitem[GGG93]{GGG93}
O. Greenberg, D. Greenberger and T. Greenbergest, \emph{$($Para$)$ bosons, $($Para$)$ fermions, quons and other beasts in the menagerie of particle statistics}, arXiv:hep-ph/9306225, (1993).

\bibitem[GS14]{GS14}
A. Guionnet and D. Shlyakhtenko, \emph{Free Monotone Transport}, Invent. Math.,
\textbf{197}(3), (2014), 613--661.

\bibitem[Hag75]{Hag75}
U. Haagerup, \emph{The standard form of von Neumann Algebras}, Math. Scand.,
\textbf{37}, (1975), 271--283.  

\bibitem[Hag87]{Hag87}
U. Haagerup, \emph{Connes' bicentralizer problem and uniqueness of
the injective factor of type $\rm{III}_1$}, Acta. Math.
\textbf{158}(1), (1987), 95--148. 


\bibitem[Ho09]{Ho09}
C. Houdayer, \emph{Free Araki-Woods factors and Connes' bicentralizer problem}, 
Proc. Amer. Math. Soc., \textbf{137}(11) (2009), 3749--3755.

\bibitem[HR11]{HR11}
C. Houdayer and \'{E}.~Ricard, \emph{Approximation properties and absence of Cartan Subalgebra for free Araki-Woods factors}, Adv. Math., \textbf{228} (2011), 764--802. 


\bibitem[Hia03]{Hiai}
F. Hiai, \emph{{$q$}-deformed {A}raki-{W}oods algebras}, Operator algebras
and Mathematical Physics ({C}onstan\c ta, 2001), Theta, Bucharest, (2003),
169--202. 

  
\bibitem[Muk09]{Muk09}
K. Mukherjee, \emph{Masas and bimodule decompositions of $\rm{II}_1$ factors}, Quart. J. Math.,
\textbf{62}, (2009), no~2, 451--486.  
  
  
\bibitem[Muk13]{Muk13}
K. Mukherjee, \emph{Singular masas and measure-multiplicity invariant}, Houston. J. Math.,
\textbf{39}, (2013), no~2, 561--598.  


\bibitem[Nel15]{Nel15}
B. Nelson, \emph{Free monotone transport without a trace}, Comm. Math. Phys.,
\textbf{334}, (2015), 1245--1298.  
  

\bibitem[Nou04]{nou}
A.~Nou, \emph{Non injectivity of the {$q$}-deformed von {N}eumann algebra},
  Math. Ann., \textbf{330}, (2004), no.~1, 17--38. 

\bibitem[Nou06]{Nou-Araki}
A. Nou, \emph{Asymptotic matricial models and {QWEP} property for
  {$q$}-{A}raki--{W}oods algebras}, J. Funct. Anal., \textbf{232} (2006), no.~2,
  295--327. 
  
  
\bibitem[Po83]{Po83}
S. Popa, \emph{Maximal injective subalgebras in factors associated with free
              groups}, Adv. in Math., \textbf{50} (1983), no.~1, 27--48.   
  

\bibitem[Ric05]{ER}
\'{E}.~Ricard, \emph{Factoriality of {$q$}-{G}aussian von {N}eumann algebras},
  Comm. Math. Phys., \textbf{257} (2005), no.~3, 659--665. 
  
  
\bibitem[SS08]{SS}
A.~M. Sinclair and R.~R. Smith, \emph{Finite von Neumann Algebras and Masas}, 
Lond. Math. Society, Lecture Note Series 351, (2008). 


\bibitem[Shl97]{Shlyakhtenko}
D. Shlyakhtenko, \emph{Free quasi-free states}, Pacific J. Math.,
  \textbf{177} (1997), no.~2, 329--368. 
  
\bibitem[Shl04]{Shlyakhtenko2}  
D. Shlyakhtenko, \emph{Some estimates for non-microstates free entropy dimension with applications
to q-semicircular families}, Int. Math. Res. Not., \textbf{51}, 2757--2772, (2004).  

\bibitem[Shl09]{Shlyakhtenko3}  
D. Shlyakhtenko, \emph{Lower estimates on microstates free entropy dimension},
Anal. PDE, \textbf{2}(2), (2009), 119--146.

\bibitem[Sni04]{sniady}
P.~{\'S}niady, \emph{Factoriality of {B}o\.zejko-{S}peicher von {N}eumann
  algebras}, Comm. Math. Phys., \textbf{246} (2004), no.~3, 561--567.
 
\bibitem[Tak72]{Ta}
M.~Takesaki, \emph{Conditional expectations in von {N}eumann algebras}, J.
  Funct. Anal., \textbf{9} (1972), 306--321. 
  
\bibitem[Tak73]{Ta2}
M.~Takesaki, \emph{Duality for crossed products and the structure of von Neumann algebras of type $\rm{III}$}, Acta Math., \textbf{131}(1), (1973), 249--310.  

\bibitem[Voi96]{Voi96}
D. Voiculescu. \emph{The analogues of entropy and of Fisher’s
information measure in free probability theory. III. The
absence of Cartan subalgebras}, Geom. Funct. Anal., 6(1), (1996), 172--199.

\bibitem[VDN92]{DVN}
D.~V. Voiculescu, K.~J. Dykema and A.~Nica, \emph{Free random variables}, CRM
  Monograph Series, vol.~1, American Mathematical Society, Providence, RI,
  (1992). 

\end{thebibliography}
\end{document}